\theoremstyle{plain}
\tikzset{->,
>=stealth,
node distance=3cm}
\newtheorem{thm}{Theorem}[section]
\newtheorem{cor}[thm]{Corollary}
\newtheorem{lem}[thm]{Lemma}
\newtheorem{prop}[thm]{Proposition}
\newtheorem{rem}[thm]{Remark}
\newtheorem{ques}[thm]{Question}
\newtheorem{prob}[thm]{Problem}
\newtheorem{conj}[thm]{Conjecture}
\def\cal{\mathcal}
\def\bbb{\mathbb}
\renewcommand{\phi}{\varphi}
\newcommand{\R}{\bbb{R}}
\newcommand{\N}{\bbb{N}}
\newcommand{\Z}{\bbb{Z}}
\let\@@pmod\pmod
\DeclareRobustCommand{\pmod}{\@ifstar\@pmods\@@pmod}
\def\@pmods#1{\mkern4mu({\operator@font mod}\mkern 6mu#1)}
\begin{document}

\title[Values of binary partition function as sums of three squares]{Values of binary partition function represented by a sum of three squares}
\author{Bartosz Sobolewski and Maciej Ulas}

\keywords{binary partitions, recurrence sequence, automatic sequence, sums of squares}
\subjclass[2010]{}
\thanks{The research of the authors is supported by the grant of the National Science Centre (NCN), Poland, no. UMO-2019/34/E/ST1/00094}

\begin{abstract}
Let $m$ be a positive integer and $b_{m}(n)$ be the number of partitions of $n$ with parts being powers of 2, where each part can take $m$ colors. We show that if $m=2^{k}-1$, then there exists the natural density of integers $n$ such that $b_{m}(n)$ can not be represented as a sum of three squares and it is equal to $1/12$ for $k=1, 2$ and $1/6$ for $k\geq 3$. In particular, for $m=1$ the equation $b_{1}(n)=x^2+y^2+z^2$ has a solution in integers if and only if $n$ is not of the form $2^{2k+2}(8s+2t_{s}+3)+i$ for $i=0, 1$ and $k, s$ are non-negative integers, and where $t_{n}$ is the $n$th term in the Prouhet-Thue-Morse sequence. A similar characterization is obtained for the solutions in $n$ of the equation $b_{2^k-1}(n)=x^2+y^2+z^2$.
\end{abstract}

\maketitle

\section{Introduction}\label{sec1}

Let $\N$ be the set of non-negative integers and $\N_{+}$ the set of positive integers.
Moreover, for a given $n\in\Z$ we define the 2-adic valuation of $n$ as 
$$
\nu_{2}(n)=\max\{k\in\N:\;2^{k}|n\},
$$
with the convention that $\nu_{2}(0)=+\infty$.

The problem of representation of integers by quadratic forms or, more generally, by forms or polynomials in many variables, is a classical one. As was proved by Lagrange in 1770, each non-negative integer can be represented as a sum of four squares. On the other hand, there are infinitely many positive integers which cannot be represented by three squares. More precisely, in 1798 Legendre proved that a non-negative integer $N$ can be represented as
$$
N=x^2+y^2+z^2
$$
for some $x, y, z\in\Z$ if and only if $N$ is not of the form $4^{r}(8s+7)$ for $r, s\in\N$. In particular, the natural density of the set of integers that cannot be represented by a sum of three squares is equal to $1/6$. This raises an interesting question whether for a given sequence of integers $(u_{n})_{n\in\N}$ there exist infinitely many solutions of the Diophantine equation
\begin{equation}\label{uequation}
u_{n}=x^2+y^2+z^2.
\end{equation}
To characterize the solutions of \eqref{uequation} it is necessary to have a good understanding of the 2-adic behavior, or, to be more precise, the 2-adic valuation of the terms of the sequence $(u_{n})_{n\in\N}$. 

Especially interesting is the case where $u_{n}$ has a combinatorial meaning, i.e., $u_{n}$ counts some discrete objects or structures. Equation \eqref{uequation} with $u_{n}=\binom{2n}{n}$ was investigated by Granville and Zhu in \cite{GH}. They characterized those $n\in\N$  such that \eqref{uequation} has no solutions in $x, y, z$. In particular, the set of integers $n$, for which $\binom{2n}{n}$ can be represented as a sum of three squares, has asymptotic density $7/8$ in the set of all natural numbers (for an early study of this problem, see also the paper of Robbins \cite{Rob}). In the same paper they also obtained a characterization of $n \in \N$ such that $n!$ is not a sum of three squares, given in terms of existence of certain patterns in the binary expansion of $n$. A different approach to this problem (via substitutions) was presented by Deshouillers and Luca \cite{DL}. They showed that the natural density of those $n$ such that $n!=x^2+y^2+z^2$ exists and is equal to $7/8$. More precisely, they proved that
$$
\#\{n\leq x:\;n!\;\mbox{is a sum of three squares}\}=\frac{7}{8}x+O(r(x)),
$$
where $r(x)=x^{2/3}$. The error term was improved by Hajdu and Papp \cite{HP} to $r(x)=x^{1/2}\log^{2}x$, and recently by Burns to $r(x)=x^{1/2}$ (see the preprint \cite{Bur}). On the other hand, Robbins obtained a precise characterzation of the solutions in $n$ of \eqref{uequation} in the case where $u_{n}$ is the $n$-th term of Fibonacci or Lucas sequence \cite{Ro}.

In this paper, we follow the same line of research and consider, in particular, equation \eqref{uequation} with $u_{n}$ being the binary partition function $b(n)$. More precisely, $b(n)$ counts the number of partitions of $n \in \N$ into parts being powers of two. For example, $b(4)=4$ because
$$
4=2^2=2+2=1+1+2=1+1+1+1
$$
are all possible representations of 4 as a sum of powers of $2$. The sequence $(b(n))_{n\in\N}$ was already introduced by Euler. However, it seems that the first serious study of its properties was performed by Churchhouse \cite{Chu} in 1969. He computed the 2-adic valuation of $b(n)$. Further results, motivated by Churchhouse's computations, were obtained independently by Gupta \cite{Gup0, Gup1, Gup2} and R{\o}dseth \cite{Rod} (see also recent studies by R{\o}dseth and Sellers \cite{RodSel}). 

Besides being interesting in its own sake, the study of the equation
$$
b(n)=x^2+y^2+z^2
$$
connects two different areas: the Diophantine equations and partitions theory. According to our best knowledge, there is no result in the literature providing a characterization of the solutions of equation \eqref{uequation} with $u_{n}$ being a partition function of sub-exponential growth. Recall that $b(n)$ is indeed of sub-exponential growth. More precisely, Mahler \cite{Mah} proved that $\log_{2} b(n)\sim \frac{1}{2}(\log_{2} n)^{2}$. Our study can be seen as a continuation and extension of recent research concerning solvability of Diophantine equations involving partitions, conducted by Tengely and Ulas \cite{TenUl}. 

In the same context, we also study the $m$-colored binary partition function $b_m(n)$, which counts binary partitions of $n$, where each part can have one of $m \geq 1$ colors. In particular, we have $b(n) = b_1(n)$. The study of arithmetic properties of this function was initiated in a paper of Gawron, Miska and Ulas \cite{GMU}, where several useful results were obtained.

Let us describe the content of the paper in some detail. In Section \ref{sec2} we recall some basic properties and results concerning the function $b_m(n)$.
The main goal of the present investigation, pursued in Sections \ref{sec3}--\ref{sec5}, is to obtain an explicit characterization of the set
$$ S_m = \{n \in \N: b_m(n) \neq x^2+y^2+z^2 \text{ for any } x,y,z \in \Z\}.$$
The reason for considering terms \emph{not} represented as a sum of three squares is that the description turns out to be more concise (similarly as for non-negative integers). We focus primarily on the case $m=2^k-1$ for $k \in \N_+$ due to the fact that the $2$-adic valuation $\nu_2(b_{2^k-1}(n))$ is known to be bounded \cite{Chu, GMU}. This allows us to determine which terms $b_{2^k-1}(n)$ are of the form $4^{r}(8s+7)$ through the reduction modulo a suitable power of $2$.
The analysis is divided into three cases: $k=1, k=2$, and $k \geq 3$, covered in Section \ref{sec3}, \ref{sec4}, and \ref{sec5}, respectively. Based on these results, in Section \ref{sec6} we give quite precise bounds for the counting function of the set $S_{2^k-1}$, and determine its natural density in the process. In the final section, we state some questions, problems, and conjectures which may serve as a basis for further study. In particular, we discuss the equation $b_m(n) = x^2+y^2+z^2$ when $m \neq 2^k-1$ as well as some interesting findings concerning the behavior of $b_m(n)$ modulo powers of $2$. We also collect results of numerical computations related to the equations $b(n)=x^2+y^2+z^4$ and $b(n)=x^2+y^2$.

\begin{rem} \label{rem:other_forms}
{\rm In this paper, we focus on the problem of representation of $b_{2^{k}-1}(n)$ as a sum of three squares. However, it is possible to use our findings to obtain similar results for other ternary quadratic forms $q$ such that the set of $n \in \N$ represented by $q$ is given in terms of the binary expansion of $n$. Such quadratic forms include, for example, the forms $x^2 + y^2 + 2z^2, x^2 + 2y^2 + 2z^2, x^2 + 2y^2 + 4z^2$.} 
\end{rem}




\section{Preliminaries}\label{sec2}

In this section we collect known properties and results which will be used throughout the paper. Recall that the ordinary generating function of the sequence $(b(n))_{n\in\N}$ has the form
$$
B(x)=\prod_{n=0}^{\infty}\frac{1}{1-x^{2^{n}}}=\sum_{n=0}^{\infty}b(n)x^{n}.
$$
As a consequence, we see that $B(x)$ satisfies a Mahler-type functional equation $(1-x)B(x)=B(x^2)$. Comparing the coefficients on both sides, we see that the sequence $(b(n))_{n\in\N}$ satisfies the recurrence: $b(0)=b(1)=1$ and
$$
b(2n)=b(2n-1)+b(n),\quad b(2n+1)=b(2n).
$$
Churchouse \cite{Chu} obtained a characterization of the $2$-adic valuation of the terms $b(n)$.
\begin{thm} \label{thm:chu}
For all $n \geq 2$ we have
$$ \nu_2(b(n)) =
\begin{cases}
1 &\text{if }\nu_{2}(n)\equiv 0\pmod*{2}, \\
2 &\text{if }\nu_{2}(n)\equiv 1\pmod*{2}.
\end{cases}$$
\end{thm}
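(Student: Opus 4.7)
The plan is to derive from the Mahler functional equation $(1-x)B(x)=B(x^{2})$ the elementary recurrences
\begin{equation*}
b(2n+1) = b(2n), \qquad b(2n) = b(2n-2) + b(n),
\end{equation*}
valid for $n\ge 1$, and then apply strong induction. The first recurrence shows that $\nu_{2}(b(2n+1)) = \nu_{2}(b(2n))$, so it suffices to analyze the sequence $S(m) := b(2m)$. The second recurrence, together with its shift, yields the paired system
\begin{equation*}
S(2k) = S(2k-1) + S(k), \qquad S(2k+1) = S(2k) + S(k),
\end{equation*}
which is the engine of the induction.

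Because the recurrence sometimes adds two summands of equal $2$-adic valuation, tracking $\nu_{2}(S(m))$ alone is insufficient: one must exclude accidental cancellations that would raise the valuation. I would therefore strengthen the inductive hypothesis to a statement about $S(m) \pmod{8}$, namely that $S(m) \pmod{8} \in \{2,4,6\}$ for all $m\ge 1$, with $S(m) \equiv 4 \pmod{8}$ exactly when $\nu_{2}(m)$ has the parity prescribed by the claim and $S(m) \in \{2,6\} \pmod{8}$ otherwise. Since $\nu_{2}(n) = 1+\nu_{2}(m)$ for $n = 2m$, this refined statement implies the theorem.

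The inductive step splits along the parity of $m$. If $m = 2k+1$ is odd, I would use $S(m) = S(2k) + S(k)$: because $\nu_{2}(2k) = 1+\nu_{2}(k)$ has opposite parity to $\nu_{2}(k)$, the induction hypothesis forces exactly one of $S(2k),S(k)$ to lie in $\{2,6\}\pmod{8}$ and the other in $\{4\}\pmod{8}$, and the sum automatically falls in $\{2,6\}\pmod{8}$. If $m = 2k$ is even, the odd case already handled guarantees $S(2k-1) \in \{2,6\}\pmod{8}$, and a further subdivision according to $\nu_{2}(k)$ delivers the required residue of $S(m) = S(2k-1) + S(k)$.

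The main obstacle is the sub-case of the even step in which both $S(2k-1)$ and $S(k)$ happen to lie in $\{2,6\}\pmod{8}$: their sum could a priori be $\equiv 0\pmod{8}$, which would push $\nu_{2}(S(m))$ beyond $2$ and break the claim. Ruling this out demands the sharper congruence $S(2k-1) \equiv S(k) \pmod{8}$ in that sub-case, and establishing it is precisely why the inductive hypothesis must be tightened from "$\nu_{2}$" to "modulo $8$".
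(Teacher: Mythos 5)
Your reduction to $S(m)=b(2m)$ and the coupled recurrences $S(2k)=S(2k-1)+S(k)$, $S(2k+1)=S(2k)+S(k)$ are correct, and you have correctly isolated the only dangerous subcase. But the argument does not close, because the strengthening you propose is not strong enough to handle that subcase. Knowing only that $S(2k-1)$ and $S(k)$ both lie in $\{2,6\}$ modulo $8$ is perfectly compatible with $S(2k-1)+S(k)\equiv 0\pmod{8}$; to exclude this you need the congruence $S(2k-1)\equiv S(k)\pmod{8}$, and this is \emph{not} a consequence of the inductive hypothesis ``$S(m)\bmod 8\in\{2,4,6\}$ with the stated parity rule.'' Which of the residues $2$ or $6$ actually occurs is genuinely nontrivial: it is governed by the Prouhet--Thue--Morse sequence (compare Proposition \ref{prop:b_mod16}, which gives $b(8n+2)\equiv 2t_n$ and $b(8n+6)\equiv 6t_n\pmod{16}$), so no invariant that merely records membership in $\{2,6\}$ can decide it. To make the induction work you would have to tighten the hypothesis further --- e.g.\ track $S(m)$ modulo $16$ together with the relevant Thue--Morse value --- at which point you are essentially reproving the Hirschhorn--Loxton congruences. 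As written, the key step is asserted rather than proved. (For reference, the paper itself gives no proof of this theorem; it is quoted from Churchhouse.)

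A separate point: the theorem as printed cannot hold for odd $n$, and your reduction quietly papers over this. Since $b(2n+1)=b(2n)$ we get $\nu_2(b(5))=\nu_2(b(4))=2$ while $\nu_2(5)=0$, so the displayed formula fails at $n=5$; moreover, for even $n$ the two cases are interchanged relative to the true values ($b(2)=2$ has $\nu_2(b(2))=1$ although $\nu_2(2)=1$ is odd, and $b(4)=4$ has $\nu_2(b(4))=2$ although $\nu_2(4)=2$ is even). The version the paper actually uses is Proposition \ref{prop:u_valuation}, which concerns $b(2n)$ only. Your claim that the refined statement about $S(m)$ ``implies the theorem'' is therefore not true of the theorem as displayed; you should state explicitly which corrected formulation you are proving before setting up the induction.
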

Another useful result was obtained independently by  R{\o}dseth \cite{Rod} and Gupta \cite{Gup0}, proving a conjecture of Churchhouse. More precisely, we have the following theorem.
\begin{thm} \label{thm:Gupta_Rodseth}
For all $s \in \N$ and odd $n\in\N$ the following congruence holds:
$$
b(2^{s+2}n)\equiv b(2^{s}n)\pmod*{2^{\mu(s)}},
$$
where $\mu(s)=\left\lfloor\frac{3s+4}{2}\right\rfloor$.
\end{thm}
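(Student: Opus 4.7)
The plan is to leverage the Mahler-type functional equation $(1-x)B(x)=B(x^2)$ satisfied by the generating function $B(x)=\sum_{n\geq 0}b(n)x^n$. Iterating this identity yields
\[
B(x^{2^s}) = \prod_{j=0}^{s-1}\bigl(1-x^{2^j}\bigr)\cdot B(x),
\]
which expresses the subsequence $(b(2^s n))_{n\in\N}$ as a convolution of $b$ with an explicit polynomial of degree $2^s-1$. Setting
\[
F_s(x)=\sum_{n\geq 0} b\bigl(2^s(2n+1)\bigr)\,x^n\in\Z[[x]],
\]
the theorem is equivalent to the formal power series congruence $F_{s+2}(x)\equiv F_s(x)\pmod{2^{\mu(s)}}$.

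The first step is to derive a two-step recurrence linking $F_{s+2}$ to $F_s$. Extracting the even- and odd-indexed subseries on both sides of the displayed identity, both at level $s$ and at level $s+2$, lets one eliminate the dependence on $B(x)$ itself and produces an explicit relation expressing $F_{s+2}(x)-F_s(x)$ as a concrete $\Z$-linear combination of convolutions of $F_t$'s ($t\leq s$) with polynomials built from the factors $(1-x^{2^j})$. The second step is a $2$-adic estimate: one shows inductively that every coefficient of this difference is divisible by $2^{\mu(s)}$. The driving force is that $(1-x)^{2^k}\equiv 1-x^{2^k}\pmod{2}$, and more refinedly, Kummer's theorem controls the $2$-adic valuation of $\binom{2^k}{j}$, so that each factor $(1-x^{2^j})^{2^k}$ contributes at least $k$ units of $2$-adic precision to every non-extremal coefficient after expansion.

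The main obstacle is matching the exact slope $3/2$ of $\mu(s)$: a naive tally of factors of $2$ tends to give only a slope of $1$, while the theorem requires three units of precision gained per two-step iteration. This extra half-unit per step arises from a cancellation that becomes visible only after separating the odd part of the series, exploiting the identity $b(2n+1)=b(2n)$ built into the Mahler equation. I would handle this by proving a slightly stronger auxiliary congruence that tracks both parities of $s$ simultaneously and exposes the common source of the extra precision; the base cases $s=0$ and $s=1$ follow from Theorem~\ref{thm:chu} together with a short direct computation, and the inductive step then closes on the basis of the sharp $2$-adic estimate just described. An alternative, and arguably cleaner, route — the one originally pursued by R{\o}dseth — is to work in the $2$-adic completion $\Z_2[[x]]$ and rediagonalise the recurrence via a substitution inspired by the Dedekind eta function, turning the iteration into an honest contraction of ratio $2^{-3/2}$ in a suitably weighted norm.
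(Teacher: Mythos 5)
This statement is not proved in the paper at all: Theorem~\ref{thm:Gupta_Rodseth} is the classical R{\o}dseth--Gupta theorem (settling Churchhouse's conjecture), and the authors simply quote it from \cite{Rod} and \cite{Gup0}. So there is no in-paper argument to compare yours against; your proposal has to stand on its own as a proof of a known but nontrivial result.

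Judged on that basis, it has a genuine gap at exactly the decisive point. Your setup is the standard one: iterate $(1-x)B(x)=B(x^2)$ and pass to sections (the operator $\sum a_nx^n\mapsto\sum a_{2n}x^n$), which yields $U^k(B)=h_k(x)B(x)$ for explicit rational functions $h_k$ and reduces the theorem to a congruence between $h_{s+2}$ and $h_s$. (Minor quibble: the displayed identity $B(x^{2^s})=\prod_{j<s}(1-x^{2^j})B(x)$ by itself expresses $b(m/2^s)$ as a short signed sum of nearby values $b(m-i)$, not the subsequence $(b(2^sn))_n$ as a convolution; you need the section operator, which you do invoke implicitly.) The problem is that the entire content of the theorem is the exponent $\mu(s)=\lfloor(3s+4)/2\rfloor$, i.e.\ a gain of \emph{three} powers of $2$ per two-step iteration, and your proposal never exhibits where that gain comes from. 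You candidly note that a naive tally gives only slope $1$, and then defer the fix to ``a slightly stronger auxiliary congruence that tracks both parities of $s$'' without stating it; but identifying and proving that auxiliary identity (in R{\o}dseth's and Hirschhorn--Loxton's treatments, an explicit recursion for the $h_k$ whose coefficients one bounds $2$-adically by induction) \emph{is} the proof. The appeal to Kummer's theorem for $\binom{2^k}{j}$ does not by itself produce the extra half-unit per step, and the closing remark that R{\o}dseth's original route proceeds via the Dedekind eta function and a contraction of ratio $2^{-3/2}$ is not an accurate description of that proof and cannot be leaned on. The base cases $s=0,1$ do follow from Theorem~\ref{thm:chu} as you say, but as written the induction does not close.
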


For $m\in\N_{+}$ we define the sequence $(b_{m}(n))_{n \in \N}$ as the convolution of $m$ copies of $(b(n))_{n \in \N}$. More precisely,
$$
b_{m}(n)=\sum_{i_{1}+\ldots+i_{m}=n}b(i_{1})\cdots b(i_{m}).
$$
It is clear that the generating function $B_m(x)$ of the sequence $(b_{m}(n))_{n\in\N}$ is the $m$-th power of $B(x)$, and $b_{m}(n)$ also has a combinatorial interpretation. Indeed, $b_{m}(n)$ is the number of binary partitions of $n$, where each part has one of $m$ possible colors. In a recent paper by Gawron, Miska, and Ulas \cite{GMU}, it is proved that for $m=2^{k}-1$ and $n\geq 2^{k}$ the 2-adic valuation of $b_{m}(n)$ belongs to the set $\{1,2\}$ . More precisely, they gave the following characterization of the $2$-adic valuation of the terms $b_{2^k-1}(n)$.

\begin{thm}[Theorem 4.6 in \cite{GMU}] \label{thm:GMU_valuation}
Let $k \in \N_{+}$. For $n,i \in \N$ such that $i < 2^{k+2}$ we have
$$
\nu_2(b_{2^k-1}(2^{k+2}n+i)) = \begin{cases}
\nu_2(b(8n)) &\text{if } 0 \leq i < 2^k, \\
1            &\text{if } 2^k \leq i < 2^{k+1}, \\
2            &\text{if } 2^{k+1} \leq i < 3 \cdot 2^{k}, \\
1            &\text{if } 3 \cdot 2^{k+1} \leq i < 2^{k+2}.
\end{cases}
$$
In particular, $\nu_2(b_{2^k-1}(n)) \in \{0,1,2\}$ and $\nu_2(b_{2^k-1}(n)) = 0$ if and only if $n < 2^k$.
\end{thm}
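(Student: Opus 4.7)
The plan is to determine $B_{2^{k}-1}(x)$ modulo $8$ explicitly and then to read off the $2$-adic valuation of each coefficient. Theorem~\ref{thm:chu} gives $\nu_{2}(b(n))\geq 1$ for $n\geq 2$, so $B(x)\equiv 1+x\pmod{2}$, and hence
$$ B_{2^{k}-1}(x)\equiv (1+x)^{2^{k}-1}\equiv 1+x+\cdots+x^{2^{k}-1}\pmod{2}. $$
This immediately establishes the final assertion that $\nu_{2}(b_{2^{k}-1}(n))=0$ if and only if $n<2^{k}$, and reduces the rest of the theorem to computing $B_{2^{k}-1}(x)$ modulo $8$ on the range $n\geq 2^{k}$.

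The key identity is the factorization
$$ B_{2^{k}-1}(x)=P_{k}(x)\cdot R_{k}(x),\qquad P_{k}(x):=\prod_{j=0}^{k-1}(1-x^{2^{j}}),\qquad R_{k}(x):=\frac{B(x)^{2^{k}}}{B(x^{2^{k}})}, $$
obtained by combining $B_{2^{k}-1}(x)=B(x)^{2^{k}}/B(x)$ with the iterated Mahler equation $B(x^{2^{k}})=P_{k}(x)\cdot B(x)$. The polynomial $P_{k}(x)$ has degree $2^{k}-1$, with the coefficient of $x^{i}$ equal to $(-1)^{s_{2}(i)}$, where $s_{2}$ denotes the binary digit sum, while $R_{k}(x)\equiv 1\pmod{2}$ by Freshman's Dream. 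To refine $R_{k}(x)$ modulo $8$, I would iterate the identity $f(x)^{2}=f(x^{2})+2\,h_{f}(x)$ (with $h_{f}$ determined explicitly from $f$) starting from $f=B$, obtaining the expansion
$$ B(x)^{2^{k}}\equiv B(x^{2^{k}})+2\,h_{B}(x^{2^{k-1}})+4\bigl(B(x^{2^{k-1}})\,h_{B}(x^{2^{k-2}})+h_{B}(x^{2^{k-2}})^{2}\bigr)\pmod{8} $$
(valid for $k\geq 2$), and then divide by $B(x^{2^{k}})$ using the Mahler consequence $B(x^{2^{k-1}})/B(x^{2^{k}})=1/(1-x^{2^{k-1}})$.

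Multiplying $P_{k}(x)$ by $R_{k}(x)\pmod 8$ and grouping the coefficients of $x^{2^{k+2}n+i}$ by the residue $i$ modulo $2^{k+2}$ yields the four cases. Since $P_{k}$ is supported on $0\leq a<2^{k}$, the convolution couples each residue class to a specific tier of the mod-$8$ expansion of $R_{k}$. The block $0\leq i<2^{k}$ captures the dilate of $B$ whose coefficient at $x^{2^{k+2}n}$ equals $b(8n)$, and after combining with the other mod-$8$ contributions and the alternating signs from $P_{k}$ this produces a unit multiple of $b(8n)$ modulo $8$, hence valuation $\nu_{2}(b(8n))\in\{1,2\}$. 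The blocks $2^{k}\leq i<2^{k+1}$ and $3\cdot 2^{k}\leq i<2^{k+2}$ receive $2\cdot(\text{odd})$ contributions, giving $\nu_{2}=1$, while the block $2^{k+1}\leq i<3\cdot 2^{k}$ receives a $4\cdot(\text{odd})$ contribution, giving $\nu_{2}=2$; the uniform bound $\nu_{2}(b_{2^{k}-1}(n))\leq 2$ for $n\geq 2^{k}$ falls out as a byproduct.

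The main difficulty is the explicit mod-$8$ bookkeeping: one must verify that each block receives precisely the claimed power of $2$ and that the "odd" factors are genuinely odd rather than accidentally vanishing modulo $2$, as well as track how the $b(8n)$-dependence emerges from a dilate of $B$ inside the expansion of $R_{k}$ (which may require carrying the Freshman's Dream iteration one or two levels further than the formula displayed above). An alternative proof by induction on $k$ via the recursion $(1-x)^{2^{k}-1}B_{2^{k}-1}(x)=B_{2^{k}-1}(x^{2})$ is also feasible but appears to require substantially more case analysis than the generating-function approach sketched above.
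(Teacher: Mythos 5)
The paper does not actually prove this statement: it is imported verbatim as Theorem 4.6 of \cite{GMU}, so there is no in-paper argument to compare yours against and your attempt must stand on its own. Its skeleton is sound: the factorization $B_{2^k-1}(x)=P_k(x)R_k(x)$ with $P_k(x)=\sum_{i=0}^{2^k-1}t_ix^i$ and $R_k\equiv 1\pmod{2}$ is correct, the mod-$2$ reduction does yield the ``in particular'' clause, and the displayed iterated-squaring expansion of $B(x)^{2^k}$ modulo $8$ checks out for $k\geq 2$.

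The gap is that every assertion which actually distinguishes the four blocks is stated rather than proved, and you flag this yourself (``one must verify\dots''). Concretely: (i) the coefficients of the $2$-tier and $4$-tier of $R_k$ are never computed --- this requires $h_B(x)=(B(x)^2-B(x^2))/2$ modulo $4$, i.e.\ knowledge of $b(n)$ or $b_2(n)$ modulo $8$, and nothing of the sort is derived; (ii) since $P_k$ is supported on a window of $2^k$ consecutive exponents, \emph{every} residue class of $i$ picks up exactly two contributions from the $2$-tier (supported on $2^{k-1}\N$) and four from the $4$-tier (supported on $2^{k-2}\N$), so your dichotomy ``$2\cdot(\text{odd})$ versus $4\cdot(\text{odd})$'' is really a cancellation claim: for $2^{k+1}\leq i<3\cdot 2^k$ the two $2$-tier terms must cancel modulo $4$ while the $4$-tier sum stays odd, and no such cancellation may occur in the other blocks --- none of this is checked; (iii) the emergence of $b(8n)$ (equivalently, of the parity of $\nu_2(n)$) in the block $0\leq i<2^k$ is not derived; (iv) the case $k=1$ falls outside your $k\geq 2$ expansion and is not treated (it follows at once from Theorem \ref{thm:chu}, but you should say so). A proof cannot outsource its only nontrivial content to a closing paragraph of difficulties. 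If you want a shorter route to the same mod-$8$ information, use the device the paper itself employs for $k\geq 3$ in Section \ref{sec5}: write $b_{2^k-1}$ as the convolution of $b_{2^k}$ with the PTM sequence, apply Lemma \ref{lem:b_m_congruence} with $m=2^k$ to get $b_{2^k-1}(n)\equiv\sum_{l}\binom{2^k}{l}t_{n-l}\pmod{8}$ for $k\geq 2$, and exploit the fact that $\binom{2^k}{l}$ vanishes modulo $8$ except at $l=0,\,2^{k-2},\,2^{k-1},\,3\cdot 2^{k-2},\,2^k$; the block-by-block analysis then reduces to a finite computation with the recurrences for $t_n$.
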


The reciprocal of $B(x)$, denoted by
$$
T(x)=\frac{1}{B(x)}=\prod_{n=0}^{\infty}\left(1-x^{2^{n}}\right)=\sum_{n=0}^{\infty}t_{n}x^{n},
$$
is the ordinary generating function for the famous Prouhet-Thue-Morse  sequence $(t_{n})_{n\in\N}$ (PTM sequence for short). Recall that $t_{n}=(-1)^{s_{2}(n)}$, where $s_{2}(n)$ is the number of $1$'s in the unique expansion of $n$ in base 2. Equivalently, we have $t_{0}=1$ and
$$
t_{2n}=t_{n},\quad t_{2n+1}=-t_{n},\quad n\geq 0.
$$
The formula in Theorem \ref{thm:chu} can then be written as $ \nu_{2}(b(n))=\frac{1}{2}|t_{n}-2t_{n-1}+t_{n-2}|$.
We also consider a variant of the PTM sequence, given by
$$
T_{n}=s_{2}(n)\pmod*{2},
$$
i.e., the sequence $(T_{n})_{n\in\N}$ is related to the PTM sequence by $t_{n}=1-2T_{n}$.

The PTM sequence is an example of an {\it automatic sequence}. More precisely, let $k \geq 2$ be a fixed integer. A sequence $\mathbf{a} = (a_n)_{n\in\N}$ is called {\it $k$-automatic} if its {\it $k$-kernel}, namely
$$ K_k(\mathbf{a}) = \{(a_{k^jn+i})_{n\in\N}: j \in \N, 0 \leq i < k^j \},
$$
is a finite set. Equivalently, $\mathbf{a}$ is $k$-automatic if there exists a deterministic finite automaton with output (DFAO) that reads the canonical base-$k$ representation of $n$ and the outputs $a_n$. In the case of the PTM sequence, it is clear that $K_k(\mathbf{t}) =\{\mathbf{t}, -\mathbf{t}\}$. Equivalently, the sequence is generated by the DFAO in Figure \ref{PTM_DFAO}. To generate $t_n$, one moves between the states (symbolized by nodes) according to subsequent digits in the binary representation of $n$. After the final digit has been read, the DFAO returns the output corresponding to the current state. For a detailed treatment of automatic sequences, we refer the reader to the monograph by Allouche and Shallit \cite{AS03a}.
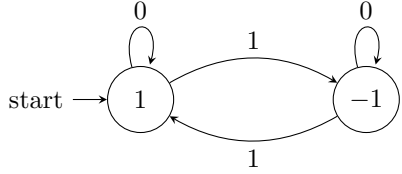
\begin{figure}[h!]
\centering
\begin{tikzpicture}
\node[state, initial](q0){$1$};
\node[state, right of=q0](q1){$-1$};

\draw
 (q0) edge[loop above] node{0} (q0)
 (q0) edge[bend left, above] node{1} (q1)
 (q1) edge[bend left, below] node{1} (q0)
 (q1) edge[loop above] node{0} (q1);
\end{tikzpicture}
\caption{A DFAO generating the PTM sequence.}
\label{PTM_DFAO}
\end{figure}

Finally, let us recall that the sequence ${\bf a}$ with values in $\Z$-module $R$ is $k$-regular if there exist finitely many sequences $\mathbf{a}_{i} = (a_i(n))_{n\in\N}$ with values in $R$ such that each sequence in $K_k({\bf a})$ is a $\Z$-linear combination of the $\bf {a}_{i}$. In other words, the $\Z$-submodule generated by the $k$-kernel $K_k({\bf a})$. In particular, $k$-automatic sequences are precisely $k$-regular sequences taking finitely many values. The class of $k$-regular sequences with values in a ring $R$ has a ring structure itself. A good introduction to the topic of regular sequences are the papers of Allouche and Shallit \cite{AS92, AS03b}.

\section{The equation $b(n)=x^2+y^2+z^2$}\label{sec3}

We start with the characterization of the solutions (in variable $n$) of the equation
$$
b(n)=x^2+y^2+z^2.
$$
Because the values $b(2n)$ and $b(2n+1)$ are equal, we restrict our attention to even indices and consider the set
$$ 
S_1'=\{ n \in \N:  b(2n)\neq x^2+y^2+z^2 \text{ for any } x,y,z \in \Z   \}. 
$$
The first few elements of $S_1'$ are the following:
$$
10, 18, 34, 40, 58, 66, 72, 90, 106, 114, 130, 136, 154, 160, 170, 178, 202, 210, 226, \ldots .
$$

Using Theorem \ref{thm:chu}, we get the following characterization of $\nu_2(b(2n))$.
\begin{prop} \label{prop:u_valuation}
For all $n \in \N_+$ we have
$$\nu_2(b(2n)) = \begin{cases}
1 &\text{if } \nu_2(n) \equiv 0 \pmod{2}, \\
2 &\text{if } \nu_2(n) \equiv 1 \pmod{2}.
\end{cases}$$
\end{prop}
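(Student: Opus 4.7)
The proposition is a direct reformulation of Theorem \ref{thm:chu} under the substitution $m \mapsto 2n$, so the plan is short. Given $n \in \N_+$, set $m = 2n$; then $m \geq 2$, so Theorem \ref{thm:chu} applies. The key identity is
$$
\nu_2(m) \;=\; \nu_2(2n) \;=\; 1 + \nu_2(n),
$$
which flips parity: $\nu_2(m) \equiv 0 \pmod{2}$ precisely when $\nu_2(n) \equiv 1 \pmod{2}$, and $\nu_2(m) \equiv 1 \pmod{2}$ precisely when $\nu_2(n) \equiv 0 \pmod{2}$. Feeding this parity correspondence into the case split of Theorem \ref{thm:chu} exchanges the two cases and reproduces the stated formula for $\nu_2(b(2n))$.

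There is no substantive obstacle here; the result is essentially an index shift of Churchhouse's theorem. The reason for isolating this version is pragmatic: the set $S_1'$ is defined using even indices (since $b(2n)=b(2n+1)$ makes the odd case redundant), so recording the $2$-adic valuation in terms of $\nu_2(n)$ rather than $\nu_2(2n)$ will streamline the subsequent analysis in Section~\ref{sec3}, where one repeatedly needs to read off whether $b(2n)$ is of the form $4^{r}(8s+7)$ from the binary expansion of $n$ itself.
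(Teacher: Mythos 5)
Your plan --- apply Theorem~\ref{thm:chu} at the even index $m=2n$ and track the shift $\nu_2(2n)=1+\nu_2(n)$ --- is exactly the route the paper takes (it offers no proof beyond ``Using Theorem~\ref{thm:chu}''), but your last step does not close. Feeding the parity flip into Theorem~\ref{thm:chu} as printed attaches the value $1$ to the condition $\nu_2(2n)\equiv 0\pmod{2}$, i.e.\ to $\nu_2(n)\equiv 1\pmod{2}$, and the value $2$ to $\nu_2(n)\equiv 0\pmod{2}$. That is Proposition~\ref{prop:u_valuation} with its two cases interchanged, not ``the stated formula''. So one of the two statements must have its cases swapped, and a direct check settles which: $b(4)=4$, so $\nu_2(b(4))=2$, whereas $\nu_2(4)=2\equiv 0\pmod{2}$ and Theorem~\ref{thm:chu} as printed would force $\nu_2(b(4))=1$; the proposition (with $n=2$, $\nu_2(2)=1$ odd) correctly predicts $2$. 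The value $b(5)=4$ shows the printed form of Theorem~\ref{thm:chu} also fails outright for odd arguments, where $\nu_2(n)=0$ always.

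The concrete gap is therefore that you asserted the parity correspondence ``reproduces the stated formula'' when a literal application of the cited theorem produces its negation; had you carried the bookkeeping through case by case, you would have hit the mismatch. The statement you actually need (the correct form of Churchhouse's theorem for even arguments) is: for even $n\geq 2$, $\nu_2(b(n))=2$ if $\nu_2(n)\equiv 0\pmod{2}$ and $\nu_2(b(n))=1$ if $\nu_2(n)\equiv 1\pmod{2}$. Alternatively, one can argue from the formula $\nu_2(b(n))=\frac{1}{2}|t_n-2t_{n-1}+t_{n-2}|$ recorded right after Theorem~\ref{thm:chu}: for the index $2n$ it reduces, via $t_{2n}=t_n$, $t_{2n-1}=-t_{n-1}$, $t_{2n-2}=t_{n-1}$, to $\frac{1}{2}|t_n+3t_{n-1}|$, which equals $2$ precisely when $t_n=t_{n-1}$, i.e.\ when $\nu_2(n)$ is odd (since $t_n=(-1)^{\nu_2(n)+1}t_{n-1}$), and equals $1$ otherwise --- exactly Proposition~\ref{prop:u_valuation}. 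With the corrected citation your substitution argument is fine; as written, it proves the proposition with the values $1$ and $2$ exchanged.
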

We can deduce that if $\nu_2(n) \equiv 0 \pmod{2}$, then $b(2n)$ is a sum of three squares. Hence, we only need to consider reduction modulo $32$ of  $b(4^k(8m+4))$, where $k,m \in \N$. More precisely, $b(4^k(8m+4))$ is a sum of three squares if and only if
\begin{equation} \label{eq:b_mod32}
    b(4^k(8m+4)) \equiv 28 \pmod{32}. 
\end{equation}
From Theorem \ref{thm:Gupta_Rodseth} and the main result of Hirschhorn and Loxton \cite{HL} one can extract suitable congruence relations, which reduce the general case to $k=0$ and describe the remaining terms $b(8m+4)$.
\begin{prop} \label{prop:b_mod32}
For all $m \in \N$ we have
\begin{align*}
    b(16m) &\equiv b(4m) \pmod{32}, \\
    b(16m+4) &\equiv 4t_m \pmod{32}, \\
    b(16m+12) &\equiv 20t_m \pmod{32}.
\end{align*}
\end{prop}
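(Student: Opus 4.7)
The plan is to prove the three congruences in sequence, with the first and (most of) the third reducing to the second.

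First, for $b(16m)\equiv b(4m)\pmod*{32}$: I would write $m=2^a q$ with $q$ odd (the case $m=0$ being trivial) and apply Theorem~\ref{thm:Gupta_Rodseth} with $s=a+2$ to the odd number $q$. This gives $b(2^{a+4}q)\equiv b(2^{a+2}q)\pmod*{2^{\mu(a+2)}}$, and since $\mu(a+2)=\lfloor(3a+10)/2\rfloor\geq 5$ for every $a\geq 0$, the congruence automatically holds modulo $32$.

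Next, I would telescope using the identity $b(2n)-b(2n-2)=b(n)$ (a consequence of $b(2n)=b(2n-1)+b(n)$ together with $b(2n-1)=b(2n-2)$) to obtain
\begin{align*}
b(16m+4) &= b(16m)+2b(8m)+b(4m), \\
b(16m+12) &= b(16m+4)+4b(8m+4).
\end{align*}
Combining the first identity with the already-proven congruence for $b(16m)$ gives $b(16m+4)\equiv 2b(4m)+2b(8m)\pmod*{32}$. In the second identity, Proposition~\ref{prop:u_valuation} applied with $n=4m+2$ yields $\nu_2(b(8m+4))=2$, so $b(8m+4)\equiv 4\pmod*{8}$ and hence $4b(8m+4)\equiv 16\equiv 16t_m\pmod*{32}$ (using $t_m^2=1$). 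This reduces the third congruence to the second.

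The main remaining task is to establish $b(4m)+b(8m)\equiv 2t_m\pmod*{16}$, which I expect to be the hardest step, since it requires more than the $2$-adic valuation information provided by Theorems~\ref{thm:chu} and~\ref{thm:Gupta_Rodseth}. Here the plan is to appeal to the main result of Hirschhorn and Loxton~\cite{HL}, whose explicit formulas for $b(n)$ modulo small powers of $2$ in terms of the Prouhet-Thue-Morse sequence deliver exactly this congruence after a short reduction. Once this is in hand, the assembled identities yield $b(16m+4)\equiv 4t_m\pmod*{32}$ and $b(16m+12)\equiv 4t_m+16t_m\equiv 20t_m\pmod*{32}$, completing the proof.
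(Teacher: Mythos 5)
Your proposal is correct and follows essentially the same route as the paper, whose entire justification is the remark that the congruences ``can be extracted from Theorem~\ref{thm:Gupta_Rodseth} and the main result of Hirschhorn and Loxton \cite{HL}'': you likewise use Gupta--R{\o}dseth for the reduction $b(16m)\equiv b(4m)\pmod{32}$ and defer the remaining substantive congruence to \cite{HL}. Your telescoping identities together with Proposition~\ref{prop:u_valuation} are a worthwhile refinement, since they isolate the single statement $b(4m)+b(8m)\equiv 2t_m\pmod{16}$ as the only input actually needed from \cite{HL}, with the third congruence then following elementarily from the second.
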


Using these relations, it is straightforward to describe the set consisting of $n \in \N$ such that $b(2n)$ is (not) a sum of three squares.

\begin{cor}\label{cor:bform}
The following conditions are equivalent:
\begin{enumerate}[label=(\alph*)]
\item The number $b(2n)$ is not a sum of three squares;
\item $n = 2^{2k+1}(4s+1)$ for some $k,s \in \N$ such that $t_s = -1$;
\item $n=2^{2k+1}(8r+2t_{r}+3)$ for some $k,r \in \N$;
\item $\chi(n) = 1$, where $\chi$ is defined by $\chi(0) = 0$ and
$$
\chi(2n+1)=0, \quad \chi(4n)=\chi(n), \quad \chi(8n+2)=T_n, \quad \chi(8n+6)=0.
$$
\end{enumerate}
\end{cor}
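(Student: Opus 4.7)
The plan is to prove (a) $\iff$ (b) $\iff$ (c) $\iff$ (d), combining Legendre's three-square theorem with Propositions \ref{prop:u_valuation} and \ref{prop:b_mod32}. The hard step is (a) $\iff$ (b); the other two equivalences are direct $2$-adic manipulations.

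For (a) $\iff$ (b), I would begin from Legendre: (a) amounts to $b(2n)=4^r(8s+7)$ for some $r,s\in\N$, whose right-hand side has even $2$-adic valuation. Proposition \ref{prop:u_valuation} thus forces $\nu_2(n)$ to be odd; writing $n=2^{2k+1}(2m+1)$ makes $\nu_2(b(2n))=2$, and (a) becomes $b(2n)\equiv 28 \pmod{32}$. Now $2n=4^{k+1}(2m+1)=4^k(8m+4)$, and iterating the first congruence of Proposition \ref{prop:b_mod32} collapses this to $b(8m+4)\equiv 28 \pmod{32}$. A case split on the parity of $m$ via the other two congruences completes the analysis: for $m=2s$, $b(16s+4)\equiv 4t_s\pmod{32}$, which equals $28$ iff $t_s=-1$; for $m=2s+1$, $b(16s+12)\equiv 20t_s\in\{20,-20\}\pmod{32}$, never $28$. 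Hence (a) is equivalent to $n=2^{2k+1}(4s+1)$ with $t_s=-1$, which is (b).

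For (b) $\iff$ (c), I would unfold the recursion $t_{2r}=t_r$, $t_{2r+1}=-t_r$: given $t_s=-1$, either $s=2r$ with $t_r=-1$ and $4s+1=8r+1=8r+2t_r+3$, or $s=2r+1$ with $t_r=1$ and $4s+1=8r+5=8r+2t_r+3$; the converse is the same calculation read backwards. For (b) $\iff$ (d), I would argue by strong induction that $\chi(n)=1$ iff $n=4^j(8k+2)$ with $t_k=-1$, by peeling off factors of $4$ via $\chi(4n)=\chi(n)$, noting that odd inputs give $0$, and using $T_k=1 \Leftrightarrow t_k=-1$ on the two remaining base cases. Rewriting $4^j(8k+2)=2^{2j+1}(4k+1)$ identifies this with (b).

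The main obstacle is the $2$-adic bookkeeping in step (a) $\iff$ (b): one must confirm that the odd-$m$ branch genuinely produces no residues $\equiv 28\pmod{32}$ (so that condition (b) is not artificially narrow), and that iteration of $b(16M)\equiv b(4M)\pmod{32}$ remains valid for every $M=4^{k-1}(2m+1)$ encountered along the way.
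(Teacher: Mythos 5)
Your proof is correct and follows essentially the same route as the paper: Legendre's theorem together with Propositions \ref{prop:u_valuation} and \ref{prop:b_mod32} reduces (a) to the congruence $b(8m+4)\equiv 28\pmod{32}$, and the parity split on $m$ (with the odd branch giving $20t_s\in\{12,20\}\pmod{32}$, never $28$) yields (b). The only cosmetic difference is in (b)$\iff$(c), where the paper invokes the characterization $\{n:T_n=1\}=\{2m+1-T_m:\,m\in\N\}$ from Allouche--Cloitre--Shevelev while you rederive the same fact directly from $t_{2r}=t_r$, $t_{2r+1}=-t_r$; your two flagged concerns are easily dispelled, since the congruence $b(16M)\equiv b(4M)\pmod{32}$ holds for every $M\in\N$.
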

\begin{proof}
As we have discussed earlier, $b(2n)$ is not a sum of three squares if and only if $2n = 4^k(8m+4)$ and \eqref{eq:b_mod32} holds. By Proposition \ref{prop:b_mod32} this happens if and only if $m$ is even and $t_m = t_{m/2} = -1$.  Letting $m = 2s$, we obtain the equivalence of (a) and (b).

To prove that (b) is equivalent to (c), we use the following description from \cite{ACS}:
\begin{align} \label{eq:evil_odious}
\{n\in\N:\;T_{n}=0\}&=\{2m+T_m:\;m\in\N\},\\
\{n\in\N:\;T_{n}=1\}&=\{2m+1-T_{m}:\;m\in\N\}. \nonumber
\end{align}
Hence, $t_s=-1$ if and only if $s=2r+1-T_r = 2r+(t_r+1)/2$ for some $r \in \N$, and our claim follows. 


Finally, it is simple to check that the set on $n$ of the form given in (b) is precisely $\{n \in \N: \chi(n) = 1 \}$.
\end{proof}

From the relation $b(2n+1) = b(2n)$ and part (c) of the corollary we get 
$$ S_1 = 2S_1' \cup (2S_1' +1) = \{ 2^{2k+2} (8r+2t_r+3) +i: k,r \in \N, i \in \{0,1\}\}.  $$

Furthermore, part (d) of the corollary directly shows that $S_1'$ (and thus $S_1$) is a $2$-automatic set; i.e., its characteristic sequence $(\chi(n))_{n \in \N}$ is $2$-automatic. A DFAO generating this sequence is shown in Figure \ref{fig:chi_automaton}.

\begin{figure}[h!]
\centering
\begin{tikzpicture}
\node[state, initial](q0){$0$};
\node[state, above right of=q0](q1){$0$};
\node[state, below right of=q0](q2){$0$};
\node[state, below right of=q1](q3){$0$};
\node[state, right of=q3](q4){$0$};
\node[state, right of=q4](q5){$1$};

\draw
 (q0) edge[bend left, above] node{$0$} (q1)
 (q1) edge[bend left, below] node{$0$} (q0)
 (q0) edge[below left] node{$1$} (q2)
 (q2) edge[loop below] node{$0,1$} (q2)
 (q1) edge[above right] node{$1$} (q3)
 (q3) edge[below right] node{$1$} (q2)
 (q3) edge[above] node{$0$} (q4)
 (q4) edge[loop above] node{$0$} (q4)
 (q4) edge[bend left, above] node{$1$} (q5)
 (q5) edge[bend left, below] node{$1$} (q4)
 (q5) edge[loop above] node{$0$} (q5);
\end{tikzpicture}
\caption{A DFAO generating $(\chi(n))_{n \in \N}$}
\label{fig:chi_automaton}
\end{figure}
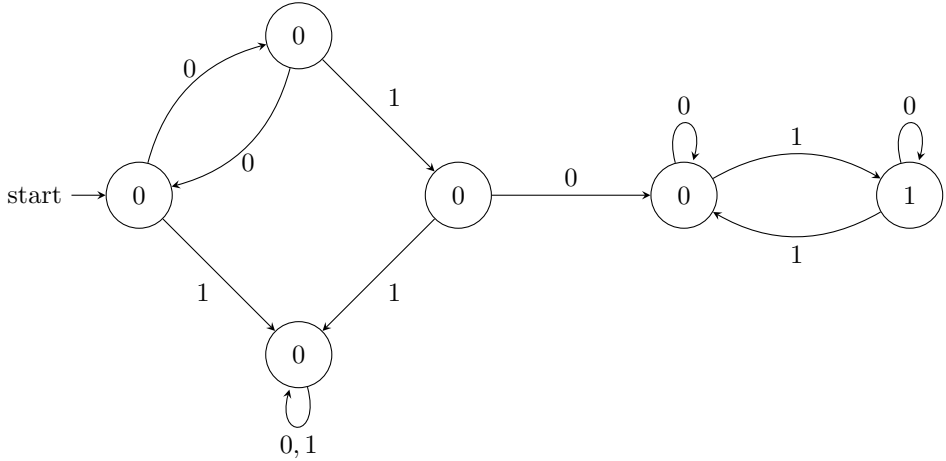

We now turn to the problem of gaps between consecutive $n$ such that $b(2n)$ is a sum of three squares. More precisely, we define $(f_n)_{n \in \N}$ to be the increasing sequence consisting of the elements of $S'$.
Let $(g_n)_{n \in \N}$ be the sequence of gaps, defined by
$$  g_n = f_{n+1} - f_n. $$
In other words, $g_n$ is the distance between $n$th and $(n+1)$th $1$ in the characteristic sequence $(\chi(n))_{n \in \N}$ (counting from $0$).
The following proposition shows that the gaps are bounded. Moreover, for each possible gap length $g$ we provide in the proof an infinite set of $n \in \N$ such that $g_n = g$.
\begin{prop}
For all $n \in \N$ we have
$$ g_n \in \{6,8,10,16,18,24\}  $$
and all possible values are attained infinitely often.
\end{prop}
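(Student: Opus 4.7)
The plan hinges on the parametrization $S_1' = \bigsqcup_{k \geq 0} L_k$ with $L_k = \{2^{2k+1}\phi(r) : r \in \N\}$ and $\phi(r) := 8r + 2t_r + 3 \in \{8r+1, 8r+5\}$, supplied by Corollary \ref{cor:bform}(c). Since $\phi(r) \equiv 1 \pmod 4$ in both cases, every element of $L_k$ lies in the residue class $2^{2k+1} \pmod{2^{2k+3}}$, so $\bigsqcup_{k \geq 1} L_k = 4 S_1'$ and $S_1' = L_0 \sqcup 4 S_1'$; in particular distinct levels are well separated modulo small powers of $2$.

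For the upper bound I would split on the levels containing $f_n$ and $f_{n+1}$. Within a single $L_k$ the jump equals
$$
f_{n+1} - f_n = 2^{2k+1}(\phi(r+1) - \phi(r)) = 2^{2k+1}\bigl(8 + 2(t_{r+1} - t_r)\bigr) \in \{4, 8, 12\} \cdot 2^{2k+1},
$$
which gives the candidates $8, 16, 24$ for $k = 0$ and only values $\geq 32$ for $k \geq 1$. Cross-level transitions are controlled by the level-separation congruence: for each small candidate distance $g$ and each admissible residue of $f_n$ modulo a suitable power of $2$, one checks directly whether $f_n + g \in S_1'$ and whether anything of $S_1'$ intervenes. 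The cross-level cases that survive are $g = 6$ from an $L_0 \to L_1$ move (template $34 \to 40$), $g = 18$ from the ensuing $L_1 \to L_0$ move (template $40 \to 58$), and $g = 10$ from an $L_{\geq 2} \to L_0$ move (template $160 \to 170$); other candidates such as $4, 12, 14, 20, 22$ fail by incompatibility of residues. Combined with the within-level list, this produces exactly $\{6, 8, 10, 16, 18, 24\}$.

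Each of the six values is then shown to occur infinitely often by exhibiting an explicit family. For $g = 10$, the identity $t_{16r+10} = t_r$ (a consequence of $s_2(16r+10) = s_2(r) + 2$) shows that the template $32\phi(r) \to 2\phi(16r+10)$ realizes a gap of $10$ whenever $t_r = 1$, hence infinitely often. Analogous short computations realize the other five gaps via a prescribed short pattern of Thue-Morse digits around $r$; since $\mathbf{t}$ is the fixed point of $0 \mapsto 01,\ 1 \mapsto 10$, every finite factor of $\mathbf{t}$ that occurs at all must occur at infinitely many starting positions, which closes the infinitude claim.

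The main obstacle I foresee is \emph{exhaustiveness} of the cross-level enumeration---verifying that no gap outside $\{6, 8, 10, 16, 18, 24\}$ slips through. This reduces to a systematic modulo-$32$ (or higher, in the $L_{\geq 2} \to L_0$ situation) bookkeeping exploiting the fact that an element of $L_k$ is uniquely specified by its residue modulo $2^{2k+3}$ together with the Thue-Morse value $t_r$ at the unique index it encodes; the fight is against an explosion of cases rather than any conceptual difficulty.
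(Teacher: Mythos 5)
Your route is genuinely different from the paper's. You work with the level decomposition $S_1'=\bigcup_{k\geq 0}L_k$, $L_k=\{2^{2k+1}(8r+2t_r+3)\}$, and classify gaps as within-level jumps versus cross-level transitions; the paper instead observes that the characteristic sequence $\chi$ of $S_1'$ is $2$-automatic and that the length-$16$ window $(\chi(16m+i))_{0\leq i\leq 15}$ takes only four possible values, determined by the finite data $(\chi(m),T_m,m\bmod 2)$, so that the complete list of gaps falls out of inspecting concatenations of four explicit blocks. Your within-level computation ($\varphi(r+1)-\varphi(r)\in\{4,8,12\}$, giving candidates $8,16,24$ at level $0$ and $\geq 32$ above), your three cross-level templates ($34\to 40$, $40\to 58$, $160\to 170$), and your infinitude argument (uniform recurrence of the Thue--Morse word, plus identities like $t_{16r+10}=t_r$) are all correct and check out against the data.

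The genuine gap is in the exhaustiveness step, and it is not merely a case explosion: as you describe it ("modulo $2^{2k+3}$ bookkeeping"), the analysis is an \emph{infinite} family of case checks, one for each level $k$, because the residue class pinning down an element of $L_k$ lives modulo $2^{2k+3}$. You need a uniform device to collapse all levels $k\geq 1$ (or $k \geq 2$) into finitely many cases --- precisely the role played in the paper by the automaticity of $\chi$, or equivalently by the self-similarity $S_1'=L_0\sqcup 4S_1'$ that you wrote down but did not exploit. Without it, statements such as "a within-level jump of $8$ in $L_0$ is an actual gap" are unproved: between $16r+10$ and $16r+18$ sits the multiple of $16$, namely $16(r+1)$, whose membership in $\bigcup_{k\geq 1}L_k$ depends on $\nu_2(r+1)$ and on Thue--Morse values at indices of unbounded size, so it cannot be excluded by any fixed modulus. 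Two further symptoms that the enumeration as sketched is incomplete: the gap $6$ also arises from $L_0\to L_{\geq 2}$ transitions (e.g.\ $154\to 160$), not only from $L_0\to L_1$; and ruling out the candidate $g=2$ (an $L_{\geq 1}$ element $16a$ followed by $16a+2\in L_0$) requires the non-obvious observation that $t_n=1$ for every $n\in S_1'$, which appears nowhere in your outline. The approach is salvageable --- combine $S_1'=L_0\sqcup 4S_1'$ with an induction on levels, or switch to the paper's finite-window argument --- but as written the central upper-bound claim is asserted rather than proved.
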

\begin{proof}
Consider length $16$ subsequences $(\chi(16n+i))_{0 \leq i \leq 15}$. A simple case distinction together with the relations in Corollary \ref{cor:bform}(d) show that there are only four possibilities, namely
$$
\begin{cases}
1, 0, 0, 0, 0, 0, 0, 0, 0, 0, 1, 0, 0, 0, 0, 0 &\text{if } \chi(n) = 1, \\
0, 0, 0, 0, 0, 0, 0, 0, 0, 0, 1, 0, 0, 0, 0, 0 &\text{if } \chi(n) = 0, T_n =1, \\
0, 0, 1, 0, 0, 0, 0, 0, 1, 0, 0, 0, 0, 0, 0, 0 &\text{if } \chi(n) = 0, T_n = 0, 2 \mid n, \\
0, 0, 1, 0, 0, 0, 0, 0, 0, 0, 0, 0, 0, 0, 0, 0 &\text{if } \chi(n) = 0, T_n = 0, 2 \nmid n. \\
\end{cases}
$$
By inspecting the gaps within these subsequences and all their possible concatenations, we can see that the gaps between subsequent $1$'s in $(\chi(n))_{n \in \N}$ can only have lengths $6,8,10,16,18,24$.

It remains to show that each of these values is indeed attained infinitely often. In Table \ref{tab:gaps} for each $g \in \{6,8,10,16,18,24\}$ we provide an infinite set $I_g$ of indices $n$ such that $\chi(n) = \chi(n+g) = 1$ and all the terms inbetween are zero. We note that $I_g$ does not necessarily contain all such indices $n$. The verification of each case is straightforward and left to the reader. 
\end{proof}

\begin{table}[h!]
\begin{tabular}{|l|l|}
\hline
$g$  & $I_g$    \\ \hline 
$6$  & $\{32m + 2: T_m = 1 \}$      \\
$8$  & $\{32m + 10: T_m = 0 \}$ \\
$10$ & $\{16m: \chi(m)  = 1\}$   \\
$16$ & $\{64m + 18: T_m = 0 \}$   \\
$18$ & $\{32m + 8: T_m = 1 \}$     \\
$24$ & $\{256m + 178: T_m = 0 \}$ \\ \hline
\end{tabular}
\caption{Gaps between $1$'s in $(\chi(n))_{n \in \N}$}
\label{tab:gaps}
\end{table}

\begin{rem}
{\rm Let us note that Hajdu and Papp proved that the gap sequence corresponding to those values of $n$ such that $n!$ is a sum of three squares is bounded by 42 \cite[Theorem 2.4]{HP}.
}
\end{rem}

It is also interesting to ask whether the sequence $(f_n)_{n \in \N}$ itself is $2$-regular (equivalently, $(g_n)_{n \in \N}$ is $2$ automatic), since its values form a $2$-automatic set.  This question seems hard, and we have not been able to give a definitive answer (see also Section \ref{sec7}). The problem comes from the fact that the description of elements of $S_1'$ in Corollary \ref{cor:bform} does not give enough information about their ordering. Instead, we consider a simpler version of this question, where we restrict our attention to indices $n \in S_1'$ with fixed $2$-adic valuation. More precisely, we let $k = 0$ in the description of Corollary \ref{cor:bform}, so that $n$ is of the form $4m+2$. Then $b(2n)$ is not a sum of three squares if and only if $b(8m+4) \equiv 28 \pmod{32}$ (this is precisely \eqref{eq:b_mod32} with $k=0$). More generally, put  
$$ \beta(m) = \frac{b(8m+4)}{4} \bmod{8},$$
and for each $a\in\{1,3, 5, 7\}$ let ${\bf c}_{a}=(c_{a}(l))_{l\in\N}$ be the increasing sequence such that
$$
\{m\in\N:\;\beta(m)=a\}=\{c_{a}(l):\;l\in\N\}.
$$
It turns out that these sequences are described by surprisingly simple formulas.

\begin{thm}\label{jvalues}
For each $a\in\{1,3, 5, 7\}$  the sequence ${\bf c}_{a}$ is 2-regular. More precisely, for $m \in \N$ we have
\begin{align*}
c_{1}(l)&=4l-t_{l}+1,\\
c_{3}(l)&=4l+t_{l}+2,\\
c_{5}(l)&=4l-t_{l}+2,\\
c_{7}(l)&=4l+t_{l}+1.
\end{align*}
\end{thm}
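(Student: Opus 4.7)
The plan is to compute $\beta(m)$ explicitly as a function of $m$ using Proposition \ref{prop:b_mod32}, and then invoke the known enumeration of the positions where $t_q=\pm1$.

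First, I would split according to the parity of $m$. If $m=2q$, then $8m+4=16q+4$, so Proposition \ref{prop:b_mod32} gives $b(8m+4)\equiv 4t_q\pmod{32}$; dividing by $4$ and reducing modulo $8$ yields $\beta(2q)\equiv t_q\pmod{8}$, which equals $1$ when $t_q=1$ and $7$ when $t_q=-1$. If $m=2q+1$, then $8m+4=16q+12$, so $b(8m+4)\equiv 20t_q\pmod{32}$, and dividing by $4$ and reducing modulo $8$ yields $\beta(2q+1)\equiv 5t_q\pmod{8}$, which equals $5$ when $t_q=1$ and $3$ when $t_q=-1$. This partitions $\N$ into four disjoint level sets of $\beta$, whose membership is governed entirely by the parity of $m$ and the value of $t_{\lfloor m/2\rfloor}$.

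Second, I would enumerate the positions of $\pm 1$ in the PTM sequence in increasing order. By \eqref{eq:evil_odious}, the $l$th $q\in\N$ with $t_q=1$ (equivalently $T_q=0$) is
$$q_l^{+}=2l+T_l=2l+\tfrac{1-t_l}{2},$$
while the $l$th $q$ with $t_q=-1$ is
$$q_l^{-}=2l+1-T_l=2l+\tfrac{1+t_l}{2}.$$
That $l\mapsto q_l^{\pm}$ is strictly increasing follows immediately from $T_l\in\{0,1\}$, so these formulas truly list the required sets of $q$'s in order. Combining the two steps, for $\beta(m)=1$ I have $m=2q_l^{+}$, hence
$$c_1(l)=2q_l^{+}=4l+1-t_l,$$
and analogously $c_7(l)=2q_l^{-}=4l+1+t_l$, $c_5(l)=2q_l^{+}+1=4l+2-t_l$, and $c_3(l)=2q_l^{-}+1=4l+2+t_l$, matching the stated formulas. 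The $2$-regularity of each $\mathbf{c}_a$ is then automatic, since every $c_a$ is an integer affine combination of the $2$-regular sequences $(l)_{l\in\N}$ and $(t_l)_{l\in\N}$ (the latter being $2$-automatic).

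The main obstacle is essentially bookkeeping: one must correctly match each residue $a\in\{1,3,5,7\}$ with the right parity of $m$ and sign of $t_q$, and verify that the enumerations $q_l^{\pm}$ extracted from \eqref{eq:evil_odious} really do list the corresponding $q$'s in increasing order. No deeper difficulty arises once Proposition \ref{prop:b_mod32} and the description \eqref{eq:evil_odious} are in hand.
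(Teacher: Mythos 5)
Your proposal is correct and follows essentially the same route as the paper: reduce Proposition \ref{prop:b_mod32} to a description of $\beta(m)$ in terms of the parity of $m$ and the value of $t_{\lfloor m/2\rfloor}$, then apply the enumeration \eqref{eq:evil_odious} to list the relevant indices in increasing order. The only cosmetic difference is that you factor out the parity first and apply \eqref{eq:evil_odious} once to $q=\lfloor m/2\rfloor$, whereas the paper applies it twice starting from $T_m=0$; the resulting formulas and the verification of monotonicity are identical in substance.
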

\begin{proof}
It is easy to see that each of the sequences from the statement is increasing.

To prove that $\beta(m)=a$ if and only if $m=c_a(l)$ for some $l \in \N$, we restate the second and third relation of Proposition \ref{prop:b_mod32} in the following way:
$$
\beta(m) = \begin{cases}
1 &\text{if } 2 \mid m \text{ and }  t_m = 1, \\
3 &\text{if } 2 \nmid m \text{ and }  t_m = 1, \\
5 &\text{if } 2 \nmid m \text{ and }  t_m = -1, \\
7 &\text{if } 2 \mid m \text{ and }  t_m = -1. \\
\end{cases}
$$
We now use the relations \eqref{eq:evil_odious}. If $\beta(m)=1$, then $2 \mid m$ and $m=2k + T_k$ for some $k \in \N$. This implies $T_k=0$, and thus $k=2l+T_l$ for some $l \in \N$. As a result, we get $m = 4l+2T_l= 4l-t_l+1$. Conversely, if $m$ is of this form, then also $\beta(m)=1$, and so we get the claim for $a=1$.

The proof for $a=3,5,7$ is similar.
\end{proof}

To conclude this section, we point out that similar results can be obtained for other quadratic forms given in Remark \ref{rem:other_forms}. More precisely, depending on the chosen form, they can be derived from either Proposition \ref{prop:b_mod32} or the following set of congruence relations (which again follows from Hirschhorn and Loxton's results).
\begin{prop} \label{prop:b_mod16}
For all $n \in \N$ we have
\begin{align*}
    b(16n+8) &\equiv b(4n+2) \pmod{16}, \\
    b(8n+2) &\equiv 2t_n \pmod{16}, \\
    b(8n+6) &\equiv 6t_n \pmod{16}. 
\end{align*}
\end{prop}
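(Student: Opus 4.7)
The three congruences are the mod $16$ analogues of those in Proposition \ref{prop:b_mod32}, and I would prove them along the same lines, by combining the Gupta--R{\o}dseth theorem (Theorem \ref{thm:Gupta_Rodseth}) with the finer explicit formulas that Hirschhorn and Loxton provide for $b(n)$ modulo small powers of $2$.

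For the first congruence, write $16n+8=2^{3}(2n+1)$ and $4n+2=2(2n+1)$, so Theorem \ref{thm:Gupta_Rodseth} with $s=1$ already yields $b(16n+8)\equiv b(4n+2)\pmod{8}$ (since $\mu(1)=3$). To upgrade by one further $2$-adic digit I would feed the ``odd multiple of a power of $2$'' datum into Hirschhorn and Loxton's closed-form representation of $b$ modulo $16$, which records the dependence of $b$ on the binary expansion of $n$ through the PTM sequence; comparing the two resulting expressions for $b(16n+8)$ and $b(4n+2)$ collapses to the required identity modulo $16$.

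For the second and third congruences, apply the basic recurrence $b(2m)-b(2m-2)=b(m)$ with $m=4n+1$ and $m=4n+3$ to obtain
\begin{align*}
b(8n+2)&=b(8n)+b(4n),\\
b(8n+6)&=b(8n+4)+b(4n+2),
\end{align*}
and then evaluate each summand modulo $16$. The values $b(8n)$ and $b(8n+4)$ can be handled by means of Proposition \ref{prop:b_mod32} reduced modulo $16$ (splitting by the parity of $n$ and using $b(16m)\equiv b(4m)$ to peel off a factor of $2$), while $b(4n)$ and $b(4n+2)$ are obtained either from the first congruence of the present proposition (once established) or from a direct application of Hirschhorn and Loxton's formulas. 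The sign alternation in $t_{n}$ reflects the relations $t_{2k}=t_{k}$ and $t_{2k+1}=-t_{k}$, which emerge naturally when one tracks how the parity of $n$ propagates through the recurrence, and the coefficients $2$ and $6$ in the right-hand sides are then pinned down by a direct check of the base cases $n=0, 1$.

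The main obstacle is the sharpening of Gupta--R{\o}dseth's theorem by one power of $2$ for the first identity: this extra $2$-adic precision is exactly what Hirschhorn and Loxton's formulas supply, and constitutes the heart of the technical work; once it is in hand, the remaining two congruences reduce to straightforward bookkeeping with the recurrence and the self-similarity of the PTM sequence.
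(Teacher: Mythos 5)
The paper does not actually prove this proposition: it is stated as a consequence of Hirschhorn and Loxton's results, so your plan of combining Theorem \ref{thm:Gupta_Rodseth} with their explicit formulas is in the same spirit as the source. The real problem lies elsewhere: the first congruence is false as stated, and your proposed argument for it therefore cannot be completed. Taking $n=0$ gives $b(8)=10$ and $b(2)=2$, and $10\not\equiv 2\pmod{16}$; more generally one finds $b(16n+8)\equiv b(4n+2)+8\pmod{16}$, equivalently $b(16n+8)\equiv 5\,b(4n+2)\pmod{16}$, which is consistent with the relation $b(16n+8)\equiv(10+8n^2)t_n+16\sigma_n\pmod{32}$ quoted in Section \ref{sec7}. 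Your first step is correct --- Theorem \ref{thm:Gupta_Rodseth} with $s=1$ gives $b(8(2n+1))\equiv b(2(2n+1))$ modulo $2^{\mu(1)}=2^3=8$ --- but that modulus is sharp here: the two sides genuinely differ by $8$ modulo $16$, so no appeal to Hirschhorn--Loxton can ``upgrade by one further $2$-adic digit.'' Had you carried out the comparison you describe, or checked a single value, the extra summand $8$ would have surfaced. This is a gap you should flag rather than assert away: the fault originates in the statement, but a proof that claims to establish it is itself incorrect.

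The second and third congruences are true, and your recurrence decomposition $b(8n+2)=b(8n)+b(4n)$ and $b(8n+6)=b(8n+4)+b(4n+2)$ is valid. The bookkeeping is less routine than you suggest, however. Proposition \ref{prop:b_mod32} says nothing about the residue class $16m+8$, so for odd $n$ the term $b(8n)=b(16m+8)$ is exactly the quantity governed by the defective first congruence, and ``the first congruence of the present proposition (once established)'' cannot be invoked --- both because it is false and because it relates $b(16m+8)$ to $b(4m+2)$ rather than evaluating either. To close the argument you would need the corrected relation $b(16m+8)\equiv b(4m+2)+8\pmod{16}$ together with an induction on $n$ (or a simultaneous induction determining $b(4n)$, $b(4n+2)$, and $b(8n+4)$ modulo $16$ at once). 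With that correction in place, your outline for the last two congruences does go through.
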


\section{The equation $b_3(n)=x^2+y^2+z^2$}\label{sec4}

In this section we characterize the elements of the set $S_{3}$ containing those $n$ such that $b_{3}(n)$ is not a sum of three squares. By virtue of Theorem \ref{thm:GMU_valuation}, to get the required characterization of $S_{3}$, we need to understand of the behaviour of $b_{3}(16n+i)\mod{32}$ for $i=0, 1, 2, 3, 8, 9, 10, 11$. Let us recall that the sequence $(b_{3}(n))_{n\in\N}$ satisfies the following recurrence relations: $b_{3}(0)=1, b_{3}(1)=3, b_{3}(2)=9$ and
\begin{align*}
b_{3}(2n)&=3b_{3}(2n-1)-3b_{3}(2n-2)+b_{3}(2n-3)+b_{3}(n),\\
b_{3}(2n+1)&=3b_{3}(2n)-3b_{3}(2n-1)+b_{3}(2n-2).
\end{align*}

We start with the following lemma.

\begin{lem}\label{b3congruences}
For all $n \in \N$ the following congruences hold:
\begin{align*}
    b_{3}(8n+i+4)&\equiv 2(2i+1+4(-1)^{n})t_{n}\pmod*{32},\\
    b_{3}(32n+i)&\equiv b_{3}(8n+i)\pmod*{64},\;i=0, 1, 2, 3, 4\\
    b_{3}(8(2n+1)+i)&\equiv 4(3+3i-i^2-2(-1)^{n+i})t_{n}\pmod*{32}\\
                    &\equiv
\begin{cases}
4(3-2(-1)^n)t_{n}\pmod*{32} &\text{if } i=0, \\
4(5+2(-1)^n)t_{n}\pmod*{32} &\text{if } i=1, \\
4(5-2(-1)^n)t_{n}\pmod*{32} &\text{if } i=2, \\
4(3+2(-1)^n)t_{n}\pmod*{32} &\text{if } i=3.
\end{cases}
\end{align*}
In particular, for each $k\in\N_{+}$ and $i\in\{0,1,2,3\}$, we have
\begin{align*}
    b_{3}(2^{2k}(2n+1)+i)&\equiv 2\pmod*{4},\\
    b_{3}(2^{2k+1}(2n+1)+i)&\equiv b_{3}(8(2n+1)+i)\pmod*{32},
\end{align*}
\end{lem}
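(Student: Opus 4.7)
My plan is to prove the three displayed congruences using the length-four linear recurrence
\[
b_3(2m) = 3b_3(2m-1) - 3b_3(2m-2) + b_3(2m-3) + b_3(m),
\qquad
b_3(2m+1) = 3b_3(2m) - 3b_3(2m-1) + b_3(2m-2),
\]
whose halving term $b_3(m)$, together with $t_{2m}=t_m$ and $t_{2m+1}=-t_m$, permits propagating statements of the shape $b_3(8n+r)\equiv f(n,(-1)^n,t_n) \pmod{2^k}$ across successive doublings of $n$. Because the recurrence expresses each $b_3(N)$ also in terms of $b_3(N-1), b_3(N-2), b_3(N-3)$, the induction step for Congruence 1 will require knowing $b_3(16n+i) \pmod{32}$ for \emph{all} $i \in \{0,\ldots,15\}$, whereas Congruences 1 and 3 jointly only furnish the residues $\{4,\ldots,15\}$. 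The missing classes $i \in \{0,1,2,3\}$ must therefore be pinned down as auxiliary data, either by an analog of Proposition \ref{prop:b_mod32} for $b_3$ (which can be derived by iterating the Mahler functional equation $(1-x)^3 B_3(x) = B_3(x^2)$), or by incorporating them as extra clauses in the inductive hypothesis.

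With this auxiliary data in hand, Congruence 1 follows by strong induction on $n$: the base $n=0$ comes from direct computation of $b_3(4),\ldots,b_3(7)\pmod{32}$, and the inductive step splits on the parity of $n$, using $t_{2m}=t_m$ for $n=2m$ and $t_{2m+1}=-t_m$ for $n=2m+1$. One application of the recurrence to $b_3(16m+i+4)$ or $b_3(16m+i+12)$ produces three shifted terms (handled by the inductive hypothesis together with the auxiliary data) and a halving term $b_3(8m+j')$ for $j' \in \{2,3,4,5\}$ (covered by the inductive hypothesis or the auxiliary data). Congruence 3 then follows from Congruence 1 by a single additional application of the even-index recurrence to $b_3(16n+8+i)$: cascading over $i \in \{0,1,2,3\}$, each of $b_3(16n+8),\ldots,b_3(16n+11)$ reduces to terms covered by Congruence 1 (and previously cascaded cases of Congruence 3), and case-by-case simplification yields the stated closed form.

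For the second congruence $b_3(32n+i) \equiv b_3(8n+i) \pmod{64}$ --- a $b_3$-analog of the Gupta--R\o dseth congruence --- I would use the same inductive template at mod-$64$ precision, requiring a strengthening of the first congruence (and the auxiliary data) to mod $64$ on the finitely many residues that arise. The two "in particular" assertions then follow by short inductions on $k$: the congruence $b_3(2^{2k+1}(2n+1)+i)\equiv b_3(8(2n+1)+i) \pmod{32}$ is trivial for $k=1$ and for $k\geq 2$ follows by iterating Congruence 2 modulo $32$ (each application reduces the exponent by $2$); and $b_3(2^{2k}(2n+1)+i) \equiv 2 \pmod 4$ reduces for $k=1$ to Congruence 1 modulo $4$ (since $2(2i+1+4(-1)^n)t_n \equiv 2(2i+1)t_n \equiv 2 \pmod 4$, as $(2i+1)t_n$ is odd), and for $k\geq 2$ follows from Theorem \ref{thm:GMU_valuation} applied with $m = 2^{2k-4}(2n+1)$, which gives $\nu_2(b_3(\cdot))=\nu_2(b(8m))=1$ via Proposition \ref{prop:u_valuation} (since $\nu_2(4m)=2k-2$ is even).

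The main obstacle I foresee is two-fold: determining the auxiliary residue classes $b_3(16n+i) \bmod 32$ for $i \in \{0,1,2,3\}$ (which are not part of the lemma's statement but are forced on us by the shape of the recurrence), and carrying out the mod-$64$ bookkeeping for Congruence 2 --- the combinations $3b_3(\cdot)-3b_3(\cdot)+b_3(\cdot)$ arising from the recurrence can contribute nontrivially mod $64$ even when individual $b_3$ values agree only mod $32$, so a strengthening to mod $64$ on the specific indices used is needed before the inductive step for Congruence 2 can close.
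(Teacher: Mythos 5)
Your outline for the first and third congruences---strong induction on $n$ via the recurrences for $b_{3}$ and the PTM sequence, carrying the residue classes $0,1,2,3$ modulo $16$ as auxiliary inductive data---is precisely the ``simple induction'' that the paper performs and whose details it omits, and your claim that the third congruence then follows from the first by one more application of the recurrence checks out (for $i=0$ the recurrence gives $(28+8(-1)^{n})t_{n}$, which agrees with $4(3-2(-1)^{n})t_{n}$ modulo $32$ since $16(1+(-1)^{n})t_{n}\equiv 0$). The genuine divergence is in the congruence $b_{3}(32n+i)\equiv b_{3}(8n+i)\pmod*{64}$: you propose rerunning the induction at mod-$64$ precision and correctly flag the resulting bookkeeping as the main obstacle, whereas the paper sidesteps it by invoking Lemma 4.7 of \cite{GMU}, which gives $\sum_{n\geq 0}b_{3}(2^{a}n+b)x^{n}=P_{a,b}(x)(1-x)^{-3a}B_{3}(x)$ for explicit $P_{a,b}\in\Z[x]$; the difference of the two subsequences then has generating function $\bigl(P_{5,i}(x)-(1-x)^{6}P_{3,i}(x)\bigr)(1-x)^{-15}B_{3}(x)$, and since $(1-x)^{-15}B_{3}(x)$ has integer coefficients the whole congruence reduces to checking that a single explicit polynomial is divisible by $64$. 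Your route is viable (the sequences involved are $2$-automatic modulo $64$, so the verification is finite), but the generating-function argument buys a much shorter certificate and avoids strengthening the entire inductive apparatus to mod $64$. For the ``in particular'' assertions your argument is fine, and your use of Theorem \ref{thm:GMU_valuation} together with Proposition \ref{prop:u_valuation} for $k\geq 2$ is a clean and arguably more transparent alternative to the paper's brief ``induction on $k$''.
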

\begin{proof}
The computation of the values of $b_{3}(8n+i+4)\mod{32} $ and $b_{3}(8(2n+1)+i)\mod{32} $ for $i=0, 1, 2, 3$ is based on a simple induction with the help of recurrence relations satisfied by the PTM sequence $(t_{n})_{n\in\N}$ and the sequence $(b_{3}(n))_{n\in\N}$. Because of this, we omit the simple details. Essentially, the same approach can be used in the case of the congruence $b_{3}(32n+i)\equiv b_{3}(8n+i)\pmod*{32}$.

However, a more conceptual proof is the following. Invoking \cite[Lemma 4.7]{GMU} we know that for each $a\in\N_{+}, b\in\{0, 1, \ldots, 2^{a}-1\}$ there is a polynomial $P_{a,b}\in\Z[x]$ such that
$$
\sum_{n=0}^{\infty}b_{3}(2^{a}n+b)x^{n}=\frac{P_{a,b}(x)}{(1-x)^{3a}}B_{3}(x).
$$
In particular, in the case we are interested in, we have
$$
\sum_{n=0}^{\infty}(b_{3}(32n+i)-b_{3}(8n+i))x^{n}=\frac{P_{5,i}(x)-(1-x)^{6}P_{3,i}(x)}{(1-x)^{15}}B_{3}(x).
$$
A quick computation reveals that for each $i=0, 1, 2, 3$, the polynomial $P_{5,i}(x)-(1-x)^{6}P_{3,i}(x)$ is divisible by 64 in the ring $\Z[x]$. Thus, as the function $(1-x)^{-15}B_{3}(x)$ has power series expansion with integer coefficients, then each number $b_{3}(32n+i)-b_{3}(8n+i)$ is divisible by 64 and we are done.

To obtain the first congruence from the ``in particular'' part, we apply induction on $k$ and the congruence $b_{3}(8n+i+4)\equiv 2(2i+1+4(-1)^{n})t_{n}\pmod*{32}$. In particular, for $i=0, 1, 2, 3$, the 2-adic valuation of $b_{3}(8n+i+4)$ is equal to 1. 

To obtain the congruence $b_{3}(2^{2k+1}(2n+1)+i)\equiv b_{3}(8(2n+1)+i)$ we again use induction on $k$ and apply the congruence $b_{3}(32n+i)\equiv b_{3}(8n+i)\pmod*{64}$.
\end{proof}

We are ready to characterize the set $S_{3}$.

\begin{thm} \label{thm:S3}
We have $n\in S_{3}$ if and only if 
$$
n=2^{2k+1}\left(8p+2\left\lfloor\frac{i}{2}\right\rfloor+3+2(-1)^{i}t_{p}\right)+i
$$
for some $i\in\{0, 1, 2, 3\}$ and $k \in \N_+$, $p\in\N$.
\end{thm}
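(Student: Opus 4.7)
The plan is to combine Theorem~\ref{thm:GMU_valuation} with Legendre's three-square theorem to identify which residue classes of $n$ can contribute to $S_{3}$, and then to use Lemma~\ref{b3congruences} to determine which $n$ in those classes actually satisfy $b_3(n)\equiv 28\pmod{32}$ (the only way a value with $\nu_2=2$ can be of the form $4^{r}(8s+7)$). Theorem~\ref{thm:GMU_valuation} forces $\nu_2(b_3(n))\in\{0,1,2\}$, and $\nu_2(b_3(n))=0$ occurs only for $n\in\{0,1,2,3\}$, where $b_3(n)\in\{1,3,9,19\}$ are all sums of three squares. A value with $\nu_2(b_3(n))=1$ is congruent to $2\pmod 4$ and hence never of the form $4^{r}(8s+7)$. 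So the problem reduces to $\nu_2(b_3(n))=2$ together with $b_3(n)\equiv 28\pmod{32}$.

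By Theorem~\ref{thm:GMU_valuation}, the $n$ with $\nu_2(b_3(n))=2$ are either those with $n\bmod 16\in\{8,9,10,11\}$, or of the form $n=16n'+i$ with $i\in\{0,1,2,3\}$ and $\nu_2(b(8n'))=2$; by Proposition~\ref{prop:u_valuation} (applied with $n\mapsto 4n'$) the latter means $\nu_2(n')$ is odd, i.e., $n'=2^{2j+1}(2p+1)$ for some $j,p\in\N$. Merging both families, the indices with $\nu_2(b_3(n))=2$ are precisely
$$n=2^{2k+1}(2p+1)+i,\qquad k\in\N_{+},\ p\in\N,\ i\in\{0,1,2,3\}.$$

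For such $n$, the second congruence in the ``in particular'' part of Lemma~\ref{b3congruences} gives $b_3(n)\equiv b_3(8(2p+1)+i)\pmod{32}$, and the third main congruence there expresses this as $4(3+3i-i^{2}-2(-1)^{p+i})t_{p}\pmod{32}$. Dividing by $4$, the condition $b_3(n)\equiv 28\pmod{32}$ becomes $(3+3i-i^{2}-2(-1)^{p+i})t_{p}\equiv 7\pmod 8$. A direct case split over $i\in\{0,1,2,3\}$, using $t_p,(-1)^p\in\{-1,1\}$, shows that in each case exactly one pair $((-1)^p,t_p)$ solves it, namely $(1,-1)$, $(1,1)$, $(-1,1)$, $(-1,-1)$ respectively; equivalently, $p$ has parity $\lfloor i/2\rfloor$, and $t_p$ is then determined.

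The last step translates each constraint on $p$ into an explicit formula via the descriptions \eqref{eq:evil_odious}. For $i=0$, ``$p$ even and $T_p=1$'' gives $p=2q$ with $T_q=1$, hence $q=2p'+1-T_{p'}$ and $p=4p'+1+t_{p'}$; the remaining three cases are analogous and combine into the uniform expression $p=4p'+\lfloor i/2\rfloor+1+(-1)^{i}t_{p'}$. Substituting into $n=2^{2k+1}(2p+1)+i$ produces the formula in the statement. The main bookkeeping obstacle is verifying that $p'\mapsto p$ is a bijection onto the required set of $p$'s for each $i$: the two values $t_{p'}\in\{-1,1\}$ send $p'$ to $p$ in two distinct residue classes modulo $4$, so no collision occurs, while \eqref{eq:evil_odious} ensures surjectivity.
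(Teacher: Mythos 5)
Your proposal is correct and follows essentially the same route as the paper: reduce via Theorem~\ref{thm:GMU_valuation} to the indices with $\nu_2(b_3(n))=2$, write them uniformly as $n=2^{2k+1}(2s+1)+i$ with $k\geq 1$, apply Lemma~\ref{b3congruences} to turn the three-square condition into $(3+3i-i^2-2(-1)^{s+i})t_s\equiv 7\pmod{8}$, and resolve each $i$ via \eqref{eq:evil_odious}. Your write-up is in fact slightly more careful than the paper's on two points it leaves implicit: disposing of the $\nu_2\in\{0,1\}$ cases explicitly, and checking that the parametrization $p'\mapsto p$ is a bijection.
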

\begin{proof}
From the characterization of the 2-adic valuation of $b_{3}(n)$ and Lemma \ref{b3congruences} we know that if $n\in S_{3}$, then we necessarily have $n\pmod{16}\in\{0, 1, 2, 3, 8, 9, 10, 11\}$. We perform a case-by-case analysis. 

Let $i\in\{0, 1, 2, 3\}$. If $n\equiv i\pmod*{16}$ and $n=2^{2k}(2s+1)+i$, then $\nu_{2}(b_{3}(n))=1$ and hence $n\not\in S_{3}$. If $n=2^{2k+1}(2s+1)+i$, then we have
$$
b_{3}(n)\equiv b_{3}(8(2s+1)+i)\equiv 4(3+3i-i^2-2(-1)^{s+i})t_{s}\pmod*{32},
$$
and thus $n\in S_{3}$ if and only if $c(i,s):=(3+3i-i^2-2(-1)^{s+i})t_{s}\equiv 7\pmod*{8}$. A case-by-case analysis using the characterization \eqref{eq:evil_odious}, reveals the following:
\begin{enumerate}
\item If $i=0$, then $c(i,s)\equiv 7\pmod*{8}$ if and only if $s$ is even and $t_{s}=-1$. Thus, $s=4p+1+t_{p}$ for some $p\in\N$.
\item If $i=1$, then $c(i,s)\equiv 7\pmod*{8}$ if and only if $s$ is even and $t_{s}=1$. Thus $s=4p+1-t_{p}$ for some $p\in\N$. 
\item If $i=2$, then $c(i,s)\equiv 7\pmod*{8}$ if and only if $s$ is odd, and $t_{s}=1.$ Thus $s=4p+2+t_{p}$ for some $p\in\N$.
\item If $i=3$, then $c(i,s)\equiv 7\pmod*{8}$ if and only if $s$ is odd and $t_{s}=-1.$ Thus $s=4p+2-t_{p}$ for some $p\in\N$.
\end{enumerate}
Gathering all the obtained characterizations, we get the statement of our theorem. 
\end{proof}

\section{The equation $b_{2^k-1}(n)=x^2+y^2+z^2$ with $k \geq 3$} \label{sec5}

In this section, we study for $k \geq 3$ representability of $b_{2^k-1}(n)$ as a sum of three squares. The main idea is to express $(b_{2^k-1}(n))_{n \in \N}$ as the convolution of $(b_{2^k}(n))_{n \in \N}$ and the PTM sequence, and apply the following lemma \cite[Lemma 4.4(1)]{GMU} to $m=2^k$.
\begin{lem} \label{lem:b_m_congruence}
Let $m \in \N_+$. Then for all $n \in \N$ we have
$$ b_{m}(n) \equiv \binom{m}{n} + 2^{\nu_2(m)+1} \binom{m-2}{n-2} \pmod{2^{\nu_2(m)+2}}. $$
\end{lem}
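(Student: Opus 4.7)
The plan is to work with the generating function $B_m(x) = B(x)^m$, using the decomposition $B(x) = (1+x) + 2C(x)$, where $C(x) = \tfrac{1}{2}(B(x) - 1 - x) \in \Z[[x]]$. This is a legitimate integer power series since $b(0)=b(1)=1$ and $b(n)$ is even for $n \geq 2$. Raising to the $m$-th power yields, via the binomial theorem,
\[
B_m(x) = \sum_{j=0}^{m} \binom{m}{j}(1+x)^{m-j}(2C(x))^{j}.
\]
Setting $a = \nu_2(m)$, the aim is to identify which indices $j$ contribute modulo $2^{a+2}$.

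The next step is a 2-adic valuation estimate. Using the identity $\binom{m}{j} = \tfrac{m}{j}\binom{m-1}{j-1}$ one deduces $\nu_2(\binom{m}{j}) \geq a - \nu_2(j)$, hence $\nu_2(\binom{m}{j}\, 2^j) \geq a + (j - \nu_2(j)) \geq a+2$ for every $j \geq 3$ (since $j - \nu_2(j) \geq 2$ in that range). So only $j \in \{0,1,2\}$ survive mod $2^{a+2}$. For $j=1$ the coefficient equals $2m = 2^{a+1}u$ with $u := m/2^a$ odd, and for $j=2$ it equals $4\binom{m}{2} = 2^{a+1}u(m-1)$; when $m$ is even both cofactors $u$ and $u(m-1)$ are odd, so after pulling out $2^{a+1}$ and reducing mod 2 the $j=1$ and $j=2$ contributions amalgamate into
\[
(1+x)^{m-2}\bigl[(1+x)\,C(x) + C(x)^2\bigr].
\]

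Therefore the lemma reduces to proving the polynomial congruence
\[
(1+x)\,C(x) + C(x)^2 \equiv x^2 \pmod{2}.
\]
For this I would exploit the Mahler functional equation $(1-x)B(x) = B(x^2)$. Substituting the decomposition $B = (1+x) + 2C$ on both sides and simplifying produces the exact integer identity $(1-x)C(x) = x^2 + C(x^2)$. Reducing modulo 2 and using the Frobenius relation $C(x^2) \equiv C(x)^2 \pmod 2$ delivers the displayed congruence at once. Extracting the coefficient of $x^n$ in $(1+x)^m + 2^{a+1}x^2(1+x)^{m-2}$ then yields $\binom{m}{n} + 2^{a+1}\binom{m-2}{n-2}$, which is the claim.

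The main obstacle I expect is the 2-adic bookkeeping — in particular, verifying the bound $j - \nu_2(j) \geq 2$ for $j \geq 3$ and confirming that the cofactors at $j=1, 2$ combine with the precise odd parities stated above. The conceptual heart of the argument is the functional equation $(1-x)C(x) = x^2 + C(x^2)$ extracted from Mahler's relation; once this is in hand, the rest is a routine coefficient extraction.
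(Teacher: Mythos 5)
The paper never proves this lemma internally --- it is quoted from \cite[Lemma 4.4(1)]{GMU} and used as a black box --- so any proof you give is necessarily ``different from the paper's''. For even $m$ your argument is correct and complete: the decomposition $B(x)=(1+x)+2C(x)$ with $C\in\Z[[x]]$ is legitimate because $b(n)$ is even for $n\geq 2$; the bound $\nu_2\bigl(\binom{m}{j}2^j\bigr)\geq \nu_2(m)+j-\nu_2(j)\geq \nu_2(m)+2$ for $j\geq 3$ is right; the identity $(1-x)C(x)=x^2+C(x^2)$ does follow from $(1-x)B(x)=B(x^2)$, and together with Frobenius it gives $(1+x)C+C^2\equiv x^2\pmod{2}$; the coefficient extraction then yields exactly the claimed formula.

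The genuine problem is that the lemma is stated for all $m\in\N_+$, while your proof covers only even $m$: the amalgamation of the $j=1$ and $j=2$ terms requires both cofactors $u=m/2^{\nu_2(m)}$ and $u(m-1)$ to be odd, and for odd $m$ the second is even. In that case the $j=2$ term vanishes mod $4$ and you are left with $2(1+x)^{m-1}C(x)$, whose reduction via $(1+x)C\equiv x^2+C(x^2)\pmod 2$ leaves an extra summand $2(1+x)^{m-2}C(x^2)$ that does not go away. This is not a repairable gap in the proof but a defect of the statement as transcribed: the congruence is false for odd $m\geq 3$. For example, $b_3(4)=42\equiv 2\pmod 4$, whereas $\binom{3}{4}+2\binom{1}{2}=0$; the case $m=1$ already fails at $n=2$. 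The correct hypothesis is $2\mid m$, which is consistent with every application in the paper (always $m=2^k$, $k\geq 3$) and with the closing remark of Section 7, which states the generalization explicitly ``for $m$ even''. So your argument proves the corrected lemma; you should either add the hypothesis $2\mid m$ or explain why the odd case cannot hold.
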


We split our reasoning into two parts: $n < 2^k$ and $n \geq 2^k$. Starting with the simpler case $n < 2^k$, by Theorem \ref{thm:GMU_valuation}, we have $\nu_2(b_{2^k-1}(n)) = 0$. Therefore, it is sufficient for our purposes to describe $b_{2^k-1}(n)$ modulo $8$. 

\begin{prop}\label{prop:b2k_mod8}
Let $k \geq 3$ and $n < 2^k$. Then
$$
b_{2^k-1}(n) \equiv t_n \cdot \begin{cases}
1 \pmod{8} &\text{if } 0 \leq n < 2^{k-2}, \\
5 \pmod{8} &\text{if } 2^{k-2} \leq n < 2^{k-1},\\ 
7 \pmod{8} &\text{if } 2^{k-1} \leq n < 3 \cdot 2^{k-2},\\ 
3 \pmod{8} &\text{if } 3 \cdot 2^{k-2} \leq n < 2^{k}.
\end{cases}
$$
\end{prop}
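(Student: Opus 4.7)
The plan is to exploit the factorization $B_{2^k-1}(x) = T(x) B_{2^k}(x)$, which by convolution yields
$$ b_{2^k-1}(n) = \sum_{j=0}^{n} t_j \, b_{2^k}(n-j). $$
To reduce this modulo $8$, I would first apply Lemma \ref{lem:b_m_congruence} with $m = 2^k$. Since $\nu_2(2^k) = k \geq 3$, the term $2^{k+1}\binom{2^k-2}{n-2}$ is divisible by $8$, and the congruence collapses to $b_{2^k}(n) \equiv \binom{2^k}{n} \pmod*{8}$.

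The next step is to determine $(1+x)^{2^k} \bmod 8$ by induction on $k \geq 3$. A direct squaring gives the base case $(1+x)^8 \equiv 1 + 4x^2 + 6x^4 + 4x^6 + x^8 \pmod*{8}$, and with $y = x^{2^{k-2}}$ one checks that squaring $1 + 4y + 6y^2 + 4y^3 + y^4$ modulo $8$ returns $1 + 4y^2 + 6y^4 + 4y^6 + y^8$, so the shape of the formula propagates to $(1+x)^{2^{k+1}}$. Hence, for $0 \leq n < 2^k$, the residue $\binom{2^k}{n} \bmod 8$ equals $1$ at $n = 0$, equals $4$ at $n \in \{2^{k-2},\, 3 \cdot 2^{k-2}\}$, equals $6$ at $n = 2^{k-1}$, and is $0$ otherwise.

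Plugging these values back into the convolution, only four indices $j$ contribute nontrivially when $n < 2^k$, so
$$ b_{2^k-1}(n) \equiv t_n + 4\,t_{n - 2^{k-2}} + 6\,t_{n - 2^{k-1}} + 4\,t_{n - 3 \cdot 2^{k-2}} \pmod*{8}, $$
with the convention that terms of negative index are omitted. In each of the ranges $[0,2^{k-2})$, $[2^{k-2},2^{k-1})$, $[2^{k-1},3 \cdot 2^{k-2})$, $[3 \cdot 2^{k-2}, 2^k)$, I would write $n = c \cdot 2^{k-2} + n'$ with $c \in \{0,1,2,3\}$ and $0 \leq n' < 2^{k-2}$. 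Because $n'$ has no bits at positions $\geq k-2$, the identity $s_2(n) = s_2(c) + s_2(n')$ and its analogues for the shifted indices $n-r$ force $t_n = (-1)^{s_2(c)} t_{n'}$ and express each $t_{n-r}$ as $\pm t_n$. A short case check then produces the respective multipliers $1, -3, -1, -5$ of $t_n$, which reduce to $1, 5, 7, 3 \pmod*{8}$, matching the statement.

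The only real obstacle is the book-keeping of signs in the four case analyses; the generating-function input and the squaring induction are entirely routine.
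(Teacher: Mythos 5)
Your proposal is correct and follows essentially the same route as the paper's proof: reduce $b_{2^k}(n)$ to $\binom{2^k}{n}$ modulo $8$ via Lemma \ref{lem:b_m_congruence}, observe that only the indices $0$, $2^{k-2}$, $2^{k-1}$, $3\cdot 2^{k-2}$ contribute (with residues $1,4,6,4$), and resolve the signs $t_{n-r}=\pm t_n$ over the four ranges. The only difference is cosmetic: you rederive the residues of $\binom{2^k}{l}$ modulo $8$ by a squaring induction on $(1+x)^{2^k}$, whereas the paper simply cites Lemma 4.5 of \cite{GMU}.
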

\begin{proof}
By Lemma \ref{lem:b_m_congruence} we have 
$$ b_{2^k-1}(n) \equiv \sum_{l=0}^n \binom{2^k}{l} t_{n-l} \pmod{8}  $$
Moreover, \cite[Lemma 4.5]{GMU} says that
$$
\binom{2^k}{l} \equiv \begin{cases}
1 \pmod{8} &\text{if } l=0,2^k, \\
4 \pmod{8} &\text{if } l=2^{k-2},3\cdot 2^{k-2}, \\
6 \pmod{8} &\text{if } l=2^{k-1}, \\
0 \pmod{8} &\text{otherwise}.
\end{cases}
$$
From this description we immediately get the claim for the cases $0 \leq n < 2^{k-2}$ and $2^{k-2} \leq n < 2^{k-1}$. If $2^{k-1} \leq n < 3 \cdot 2^{k-2}$, we get
$$ b_{2^k-1}(n) \equiv t_n + 4t_{n-2^{k-2}}+6t_{n-2^{k-1}} \equiv t_n + 2t_{n-2^{k-1}} \pmod{8}. $$
Since $n$ has $2^{k-1}$ in its binary expansion, we get $t_{n-2^{k-1}} = - t_n$, and the required congruence follows. Finally, if $3 \cdot 2^{k-2} \leq n < 2^{k}$, we again have $t_{n-2^{k-1}} = - t_n$ so
$$ b_{2^k-1}(n) \equiv t_n + 4t_{n-2^{k-2}}+6t_{n-2^{k-1}} + 4t_{n-3 \cdot 2^{k-2}} \equiv -5t_n \pmod{8}. \qedhere $$
\end{proof}

As an immediate corollary, we can describe $n < 2^k$ such that $b_{2^k-1}(n)$ is (not) a sum of three squares.

\begin{cor} \label{cor:n<2^k}
Let $k \geq 3$ and $n < 2^k$. Then $b_{2^k-1}(n)$ is not a sum of three squares of integers if and only if one of the following cases holds:
\begin{enumerate}
    \item $0 \leq n < 2^{k-2}$ and $t_n = -1$;
    \item $2^{k-1} \leq n < 3 \cdot 2^{k-2}$ and $t_n = 1$.
\end{enumerate}
\end{cor}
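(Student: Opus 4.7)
The plan is to combine Legendre's three-squares theorem, as recalled in the introduction, with the $2$-adic information provided by Theorem \ref{thm:GMU_valuation} and the congruence classes of Proposition \ref{prop:b2k_mod8}. By Legendre, a positive integer $N$ fails to be a sum of three squares precisely when $N = 4^r(8s+7)$ for some $r,s \in \N$. Theorem \ref{thm:GMU_valuation} tells us that $\nu_2(b_{2^k-1}(n)) = 0$ for all $n < 2^k$, so $b_{2^k-1}(n)$ is odd. In particular the factor $4^r$ in the Legendre criterion must be $1$, and the criterion collapses to the simple condition
\[
b_{2^k-1}(n) \equiv 7 \pmod{8}.
\]

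With this reduction in place, I would just run through the four cases of Proposition \ref{prop:b2k_mod8} and check, for each range of $n$, for which value of $t_n \in \{\pm 1\}$ the displayed residue matches $7$ modulo $8$. In the first range we have $b_{2^k-1}(n) \equiv t_n \pmod{8}$, which is $7$ exactly when $t_n = -1$; in the second range $b_{2^k-1}(n) \equiv 5t_n \pmod{8}$, giving residues $5$ and $3$, never $7$; in the third range $b_{2^k-1}(n) \equiv 7t_n \pmod{8}$, which is $7$ exactly when $t_n = 1$; in the fourth range $b_{2^k-1}(n) \equiv 3t_n \pmod{8}$, producing residues $3$ and $5$, again never $7$.

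Collecting the two cases that do occur yields precisely the conditions (1) and (2) listed in the corollary, establishing both implications simultaneously. There is no real obstacle here: the corollary is essentially a bookkeeping consequence of Proposition \ref{prop:b2k_mod8} combined with the oddness of $b_{2^k-1}(n)$ on the range $n < 2^k$. The only subtlety worth emphasizing in the write-up is that the $\nu_2$ computation from Theorem \ref{thm:GMU_valuation} is what lets us discard all $r \geq 1$ in Legendre's characterization, so the problem becomes purely a question modulo $8$ rather than modulo a higher power of $2$.
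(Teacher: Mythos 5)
Your proposal is correct and matches the paper's (unwritten) argument: the paper presents this as an immediate corollary of Proposition \ref{prop:b2k_mod8}, relying exactly on the observation that $\nu_2(b_{2^k-1}(n))=0$ for $n<2^k$ reduces Legendre's criterion to the single congruence $b_{2^k-1}(n)\equiv 7\pmod{8}$, after which one reads off the four residue classes. Nothing is missing.
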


We move on to the case $n \geq 2^k$. This time we have $\nu_2(b_{2^k-1}(n)) \in \{1,2\}$ by Theorem \ref{thm:GMU_valuation}, which means that it is sufficient to consider $b_{2^k-1}(n)$ modulo $32$. To this end, we need a standard lemma concerning the behavior of binomial coefficients modulo powers of $2$ (we provide a proof for completeness).

\begin{lem} \label{lem:binomial_valuation}
The following statements hold:
\begin{enumerate}[label=(\alph*)]
    \item For all $k,n \in \N$ such that $1 \leq n \leq 2^k$, we have 
   $$ \nu_2 \left( \binom{2^k}{n}\right) = k -\nu_2(n). $$
\item For all $m,n \in \N$ we have
 $$ \binom{2m}{2n} \equiv \binom{m}{n} \pmod{2^{\nu_2(m)+1}}.  $$
\end{enumerate}
\end{lem}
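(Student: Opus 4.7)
For part (a), the cleanest route is the identity
\[
n\binom{2^k}{n} = 2^k \binom{2^k-1}{n-1},
\]
combined with the observation that $\binom{2^k-1}{j}$ is odd for every $0 \leq j \leq 2^k-1$. This oddness is immediate from Lucas's theorem: since $2^k-1$ has binary expansion $\underbrace{1\cdots 1}_{k}$, each factor $\binom{1}{d_i}$ in the Lucas product equals $1$. Taking $2$-adic valuations in the displayed identity then yields $\nu_2\binom{2^k}{n} = k - \nu_2(n)$, as required.

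For part (b), I would first derive the identity
\[
\binom{2m}{2n} = \sum_{j \geq 0} \binom{m}{2j} \binom{m-2j}{n-j}\, 4^{j},
\]
and then exploit it term by term. It follows by writing $(1+x)^{2m} = (1+2x+x^2)^m$, grouping as $\bigl((1+x^2) + 2x\bigr)^m$, expanding by the binomial theorem, and reading off the coefficient of $x^{2n}$: the exponent of $2x$ must be even, say $2j$, forcing the contribution of $n-j$ from $(1+x^2)^{m-2j}$. The $j=0$ term on the right-hand side equals $\binom{m}{n}$, so the lemma reduces to showing that every summand with $j \geq 1$ has $2$-adic valuation at least $\nu_2(m)+1$.

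To handle this, I would use the standard relation $2j\binom{m}{2j} = m\binom{m-1}{2j-1}$, which gives
\[
\nu_2\binom{m}{2j} \geq \nu_2(m) - 1 - \nu_2(j).
\]
Adding the contribution $2j$ coming from $4^{j}$, each $j \geq 1$ term has $2$-adic valuation at least $\nu_2(m) + 2j - 1 - \nu_2(j)$. The elementary bound $\nu_2(j) \leq j-1$ (a consequence of $2^{\nu_2(j)} \leq j$ together with $j \leq 2^{j-1}$ for $j \geq 1$) then yields $2j - 1 - \nu_2(j) \geq j \geq 1$, which is exactly the bound required. The whole argument is routine $2$-adic bookkeeping; I do not expect any genuine obstacle beyond verifying this small inequality, and checking the edge cases $n=0$ and $n > m$ in which both sides of the congruence vanish or are trivially equal.
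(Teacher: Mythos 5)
Your proof is correct, but it takes a genuinely different route from the paper in both parts. For (a), the paper computes $\nu_2\binom{2^k}{n}=s_2(n)+s_2(2^k-n)-1$ via Legendre's formula and then unwinds the digit sums using $s_2(2^k-n)=k-s_2(n-1)$ and $s_2(n-1)=s_2(n)-1+\nu_2(n)$; your absorption identity $n\binom{2^k}{n}=2^k\binom{2^k-1}{n-1}$ combined with the oddness of $\binom{2^k-1}{j}$ (Lucas) gets there in one line and is arguably cleaner, at the mild cost of invoking Lucas's theorem rather than staying purely with Legendre's formula. For (b), the paper factors out the even parts of the factorials to write $\binom{2m}{2n}=\binom{m}{n}\cdot\frac{(2m-1)!!}{(2n-1)!!\,(2m-2n-1)!!}$, shows this is $\equiv(-1)^n\binom{m}{n}$ modulo $2^{\nu_2(m)+1}$, and then removes the sign in the odd-$n$ case via $\nu_2\binom{m}{n}\geq\nu_2(m)$ (from $\binom{m}{n}=\frac{m}{n}\binom{m-1}{n-1}$). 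Your expansion of $(1+x)^{2m}=\bigl((1+x^2)+2x\bigr)^m$ yields the convolution identity $\binom{2m}{2n}=\sum_{j\geq 0}\binom{m}{2j}\binom{m-2j}{n-j}4^j$, whose $j=0$ term is exactly $\binom{m}{n}$, and your $2$-adic bookkeeping ($\nu_2\binom{m}{2j}\geq\nu_2(m)-1-\nu_2(j)$ plus $\nu_2(j)\leq j-1$) correctly shows each $j\geq 1$ term has valuation at least $\nu_2(m)+j$. This avoids the sign discussion and the case split on the parity of $n$ entirely, and in fact gives slightly more (the error is divisible by $2^{\nu_2(m)+1}\cdot 2^{j-1}$ termwise); the paper's route is shorter to state but leans on the double-factorial congruence. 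Both arguments are complete and valid.
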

\begin{proof}
By Legendre's formula we get
$$ \nu_2 \left( \binom{2^k}{n}\right) = 2^k - 1 - (n - s_2(n) + 2^k-n - s_2(2^k-n)   ) = s_2(n) + s_2(2^k-n) -1. $$
We can express $s_2(2^k-n)$ as 
$$ s_2(2^k-n) = s_2((2^k-1)-(n-1)) = k - s_2(n-1).$$
Now, write $n = 2^{\nu_2(n)} l$, which yields
\begin{align*}
s_2(n-1) &= s_2(2^{\nu_2(n)}(l-1) + (2^{\nu_2(n)}-1)) = s_2(l-1) + \nu_2(n) \\
&= s_2(l) - 1 + \nu_2(n) = s_2(n)-1 + \nu_2(n).
\end{align*}
Combining the above equalities, we get (a).

Moving on to (b), the claim clearly holds for $n=0$ so we can assume that $n \geq 1$. We have the congruence
$$ \binom{2m}{2n} =  \binom{m}{n} \frac{(2m-1)!!}{(2n-1)!!(2m-2n-1)!!} \equiv  (-1)^n \binom{m}{n} \pmod{2^{\nu_2(m)+1}}.$$
If $n$ is even, we immediately obtain (b). If $n$ is odd, we use the inequality
$$ \nu_2\left( \binom{m}{n} \right) = \nu_2 \left( \frac{m}{n} \binom{m-1}{n-1} \right) \geq \nu_2(m),$$
which again leads to the desired result.
\end{proof}

We are now ready to describe $b_{2^k-1}(n)$ modulo $32$ for $n \geq 2^k$. This time, the characterization involves two consecutive terms of the PTM sequence.

\begin{thm} \label{thm:b2k_mod32}
Fix $k,i,j \in \N$ such that $k\geq 3$, $i < 8$, and $j<2^{k-3}$. Then for all $m \geq 1$ we have
$$ b_{2^k-1}(2^km+2^{k-3}i+j) \equiv  t_j(c_i  t_m + d_i t_{m-1}) \pmod{32},    $$
where the coefficients $c_i, d_i$ do not depend on $k$ and are given in Table \ref{tab:cd_i}.
\end{thm}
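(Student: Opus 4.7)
The plan is to start from the convolution identity
\[
b_{2^k-1}(N) = \sum_{l=0}^{N} b_{2^k}(l) \, t_{N-l},
\]
which follows from $B(x)^{2^k-1} = B_{2^k}(x) \cdot T(x)$. Applying Lemma~\ref{lem:b_m_congruence} with $m = 2^k$ yields
\[
b_{2^k}(l) \equiv \binom{2^k}{l} + 2^{k+1} \binom{2^k-2}{l-2} \pmod{2^{k+2}}.
\]
For $k \geq 4$ the correction term vanishes modulo $32$, so $b_{2^k}(l) \equiv \binom{2^k}{l} \pmod{32}$, and in particular $b_{2^k}(l) \equiv 0 \pmod{32}$ for $l > 2^k$. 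The case $k = 3$ requires keeping the correction $16 \binom{6}{l-2}$; I handle it at the end.

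Next I would reduce the convolution to a sparse sum. For $k \geq 4$, Lemma~\ref{lem:binomial_valuation}(a) gives $\nu_2 \binom{2^k}{l} = k - \nu_2(l)$, so only $l = 2^{k-4} j'$ with $j' \in \{0, 1, \ldots, 16\}$ contribute modulo $32$. Iterating Lemma~\ref{lem:binomial_valuation}(b) exactly $k-4$ times produces $\binom{2^k}{2^{k-4} j'} \equiv \binom{16}{j'} \pmod{32}$ uniformly in $k$. Thus
\[
b_{2^k-1}(N) \equiv \sum_{j'=0}^{16} \binom{16}{j'} \, t_{N - 2^{k-4} j'} \pmod{32}.
\]
For odd $j'$ one has $\binom{16}{j'} \equiv 16 \pmod{32}$ and $16 \cdot (\pm 1) \equiv 16 \pmod{32}$, so the eight odd-$j'$ contributions sum to $8 \cdot 16 \equiv 0 \pmod{32}$ and drop out. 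Only even $j' = 2 j''$ with $j'' \in \{0, \ldots, 8\}$ remain, corresponding to $l = 2^{k-3} j''$.

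The heart of the proof is the factorization of $t_{N - 2^{k-3} j''}$. Using the PTM product rule $t_{2^a u + v} = t_u t_v$ for $0 \leq v < 2^a$ and writing $N = 2^k m + 2^{k-3} i + j$ with $j < 2^{k-3}$: if $j'' \leq i$, then $N - 2^{k-3} j'' = 2^k m + 2^{k-3}(i - j'') + j$ with remainder in $[0, 2^k)$, giving $t_{N - 2^{k-3} j''} = t_m \cdot t_{i - j''} \cdot t_j$; if $j'' > i$, I borrow from $m$ to obtain $N - 2^{k-3} j'' = 2^k(m - 1) + 2^{k-3}(8 + i - j'') + j$, hence $t_{N - 2^{k-3} j''} = t_{m - 1} \cdot t_{8 + i - j''} \cdot t_j$. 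Pulling $t_j$ out and collecting yields
\[
c_i \equiv \sum_{j'' = 0}^{i} \binom{16}{2 j''} t_{i - j''}, \qquad d_i \equiv \sum_{j'' = i+1}^{8} \binom{16}{2 j''} t_{8 + i - j''} \pmod{32},
\]
each manifestly independent of $k$, $j$, and $m$.

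The case $k = 3$ (where $j = 0$ is forced) will be handled by an analogous convolution argument, now with $b_8(l)$ replacing $\binom{16}{2l}$ for $l \in \{0, \ldots, 8\}$. The main obstacle is to reconcile this with the uniform formula: each difference $b_8(l) - \binom{16}{2l}$ turns out to be a multiple of $16$, and the aggregate discrepancy, after pairing with $t_{N - l}$ and using the identity $\sum_{r = 0}^{7} t_r = 0$, collapses to an integer multiple of $16 (t_m - t_{m-1})$. Since $t_m - t_{m-1} \in \{-2, 0, 2\}$ this expression is $\equiv 0 \pmod{32}$, so the $k = 3$ case produces the same constants $c_i, d_i$. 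The other delicate steps are the precision tracking in the iterated application of Lemma~\ref{lem:binomial_valuation}(b) and the verification of the odd-$j'$ cancellation.
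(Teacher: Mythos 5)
Your proposal is correct and follows essentially the same route as the paper's proof: the same convolution identity, the same use of Lemma \ref{lem:b_m_congruence} and Lemma \ref{lem:binomial_valuation} to sparsify the sum and pass from $\binom{2^k}{2^{k-4}j'}$ to $\binom{16}{j'}$, the same cancellation of the odd-index terms, and the same Thue--Morse factorization (your identity $t_{N-2^{k-3}j''}=t_j\,t_{m-1}\,t_{8+i-j''}$ for $j''>i$ is in fact stated more accurately than the paper's displayed case distinction). The only point worth adding is that your closed forms for $(c_i,d_i)$ match Table \ref{tab:cd_i} only after shifting both coefficients by $16$ modulo $32$ for $i\in\{1,3,4,6\}$ (e.g.\ $i=1$ yields $(23,-19)$ versus the tabulated $(7,-3)$), which is harmless because $16(t_m-t_{m-1})\equiv 0\pmod{32}$ --- precisely the observation you already invoke in your treatment of $k=3$.
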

\begin{table}[h!]
\begin{tabular}{|l|llllllll|}
\hline
$i$ & $0$ & $1$ & $2$ & $3$ & $4$ & $5$ & $6$ & $7$   \\ \hline
$c_i$ & $1$  & $7$  & $3$ & $5$  & $9$  & $-1$  & $3$  & $5$  \\
$d_i$ & $-5$ & $-3$ & $1$ & $-9$ & $-5$ & $-3$  & $-7$ & $-1$ \\ \hline
\end{tabular}
\caption{The coefficients $c_i$, $d_i$.}
\label{tab:cd_i}
\end{table}
\begin{proof}
Consider first the case $k \geq 4$. By Lemma \ref{lem:b_m_congruence} we have
$$
   b_{2^k-1}(n) = \sum_{l=0}^n b_{2^k}(l) t_{n-l} \equiv \sum_{l=0}^n \binom{2^k}{l} t_{n-l} \pmod{32}. 
$$
Now, by Lemma \ref{lem:binomial_valuation}(a), the binomial coefficients with $v_2(l) < k-4$ vanish modulo $32$. Therefore, assuming that $n \geq 2^k$, the above sum simplifies to
$$
b_{2^k-1}(n)  \equiv \sum_{l=0}^{16} \binom{2^k}{2^{k-4}l} t_{n-2^{k-4}l} \equiv \sum_{l=0}^{16} \binom{16}{l} t_{n-2^{k-4}l} \pmod{32},
$$
where the second congruence follows from Lemma \ref{lem:binomial_valuation}(b). Furthermore, we can eliminate the terms with $l$ odd, since there is an even number of them and they are all congruent with $16$ modulo $32$. Therefore, we get the congruence
$$b_{2^k-1}(n) \equiv\sum_{l=0}^{8} \binom{16}{2l} t_{n-2^{k-3}l} \pmod{32}.$$
To simplify the right-hand side, consider $b_{2^k-1}$ at indices of the form given in the statement, namely $n=2^km+2^{k-3}i+j$, where $m \geq 1$, $0 \leq i < 8$, and $0 \leq j < 2^{k-3}$.
For the sake of clarity, we will now momentarily use the notation $t(n) = t_n$. By the recurrences that define the Thue--Morse sequence, we get
$$ t(2^km+2^{k-3}i+j - 2^{k-3}l) = t_j t_{8m+i-l} = t_j \cdot
\begin{cases}
t_n t_{i-l} &\text{if } l \leq i, \\
-t_{n-1}t_{l-i} &\text{if } l > i.
\end{cases} $$
Therefore, the claimed formula is valid with the coefficients
$$
    c_i = \sum_{l=0}^{i} \binom{16}{2l} t_{i-l}, \qquad
    d_i = -\sum_{l=i+1}^{8} \binom{16}{2l} t_{l-i},
$$
and a direct calculation (modulo $32$) gives their values as in Table \ref{tab:cd_i}.

In the case $k=3$, the expression for $b_{2^k-1}(n)$ modulo $32$ obtained from Lemma \ref{lem:b_m_congruence} also contains the sum
$$ 16 \sum_{l=0}^{n} \binom{6}{l-2} t_{n-l}. $$
If $n \geq 8$, then the whole sum vanishes modulo $32$, so we again arrive at the formula
$$
b_{7}(n) \equiv \sum_{l=0}^8 \binom{8}{l} t_{8-l} \pmod{32}. 
$$
After a similar calculation as before, we get the result.
\end{proof}

Using this result, we can determine the indices $n \geq 2^k$ such that $b_{2^k-1}(n)$ is not a sum of three squares. The description turns out to be surprisingly simple.

\begin{cor} \label{cor:n>=2^k}
Let $k \geq 3$ and $n \geq 2^k$. The following conditions are equivalent:
\begin{enumerate}[label=(\alph*)]
    \item $b_{2^k-1}(n)$ is not a sum of three squares;
    \item $t_n = t_{n-2^k} = 1$;
    \item $ n = 2^{k} m + l, $ 
where $l,m \in \N$ are such that $l < 2^k$, $t_m = t_l$, and $\nu_2(m) \equiv 1 \pmod{2}$.
\end{enumerate}
\end{cor}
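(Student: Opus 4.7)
The plan is to reduce (a) to the single congruence $b_{2^k-1}(n) \equiv 28 \pmod{32}$, exploit a structural identity hidden in Table~\ref{tab:cd_i}, and then translate the resulting condition into (b) and (c) via elementary properties of the PTM sequence.

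For the reduction, I invoke Theorem~\ref{thm:GMU_valuation}, which gives $\nu_2(b_{2^k-1}(n)) \in \{1,2\}$ for $n \geq 2^k$. Together with Legendre's three-square theorem, this means that $b_{2^k-1}(n)$ is not a sum of three squares if and only if $\nu_2(b_{2^k-1}(n))=2$ and the odd part is $\equiv 7 \pmod 8$, equivalently $b_{2^k-1}(n)\equiv 28 \pmod{32}$.

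Writing $n = 2^k m + 2^{k-3} i + j$ with $m \geq 1$, $0 \leq i < 8$, $0 \leq j < 2^{k-3}$ as in Theorem~\ref{thm:b2k_mod32} gives $b_{2^k-1}(n) \equiv t_j(c_i t_m + d_i t_{m-1}) \pmod{32}$. The crux of the argument is the structural identity, which I would verify by a one-line inspection of Table~\ref{tab:cd_i}:
$$ c_i + d_i = -4 t_i \quad\text{and}\quad c_i - d_i \in \{2,6,10,14\}, \qquad i = 0,1,\ldots,7.$$
Consequently, when $t_m = t_{m-1}$ the inner expression simplifies to $-4 t_i t_m$, while when $t_m \ne t_{m-1}$ it equals $t_m(c_i - d_i)$, a value of $2$-adic valuation exactly $1$ and hence never $\equiv 28 \pmod{32}$. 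Therefore (a) is equivalent to the conjunction $t_m = t_{m-1}$ and $t_i t_j t_m = 1$.

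For the remaining equivalences, set $l = 2^{k-3} i + j$, so that $t_l = t_i t_j$; since $n - 2^k = 2^k(m-1) + l$, one gets $t_n = t_m t_l$ and $t_{n-2^k} = t_{m-1} t_l$. The condition above thus reads $t_n = t_{n - 2^k} = 1$, establishing (a) $\Leftrightarrow$ (b). For (b) $\Leftrightarrow$ (c), I would use the standard identity $s_2(m) - s_2(m-1) = 1 - \nu_2(m)$, easily derived from the binary carry when subtracting $1$; this shows that $t_m = t_{m-1}$ is equivalent to $\nu_2(m)$ being odd, and combining it with $t_m = t_l$ gives exactly (c). The main obstacle is spotting the identity $c_i + d_i = -4 t_i$: without recognising this pattern, one is forced into a tedious $32$-case verification over all choices of $(i, t_m, t_{m-1}, t_j)$, whereas with it the proof collapses into a clean two-line argument.
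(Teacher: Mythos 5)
Your proposal is correct and follows essentially the same route as the paper: reduce (a) to the congruence $b_{2^k-1}(n)\equiv 28\pmod{32}$ via Theorem~\ref{thm:GMU_valuation}, apply Theorem~\ref{thm:b2k_mod32}, and exploit exactly the identities $c_i+d_i=-4t_i$ and $4\nmid c_i-d_i$ that the paper's proof singles out, then pass to (b) and (c) via $t_l=t_it_j$ and $t_m=(-1)^{\nu_2(m)+1}t_{m-1}$. The only difference is that you spell out the intermediate bookkeeping (the explicit set $\{2,6,10,14\}$ and the carry identity $s_2(m)-s_2(m-1)=1-\nu_2(m)$) that the paper leaves implicit.
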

\begin{proof}
Write $n = 2^km+2^{k-3}i+j$ as in Theorem \ref{thm:b2k_mod32}.
Observe that $c_i + d_i = - 4t_i$, while $c_i - d_i$ is not divisible by $4$. Hence, the term $b_{2^k-1}(2^km+2^{k-3}i+j)$  is not a sum of three squares if and only if
$$ t_m = t_{m-1} = t_i t_j, $$
which after multiplying both sides by $t_i t_j$ gives precisely (b).

The equivalence with (c) is obtained by writing $l = 2^{k-3}i+j$ and observing that $t_m = (-1)^{\nu_2(m)+1} t_{m-1}$.
\end{proof}

\section{Counting the solutions}\label{sec6}
The aim of this section is to provide estimates for the counting functions of the sets $S_{2^k-1}$. For real $x \geq 0$ and $m \in \N_+$ let
$$S_m(x) = S_m \cap [0,x] =  \# \{n \leq x: b_m(n)  \text{ is not a sum of three squares} \}.$$
Using the descriptions of the sets $S_{2^k-1}$ obtained in the previous sections for various $k$ it is straightforward to check that
$$ S_{2^k-1}(x) = \delta_k x + O(\log x), $$
where $\delta_1 = \delta_2 = 1/12$ and $\delta_k = 1/6$ for $k \geq 3$. In the following three results, we provide more precise bounds for $S_{2^k-1}(x) - d_k x$ in the case  $k=1,k=2$ and $k \geq 3$, respectively. In particular, each lower and upper bound is of the form $C_1\log_2 x + C_2$, where the constant $C_1$ is optimal.

\begin{thm}
For every $x \geq 6$ we have
$$
-\frac{5}{3} < S_1(x) - \frac{x}{12} <\frac{1}{2} \log_2 x - \frac{19}{12}.
$$
In particular, the density of the set $S_1$ in $\N$ exists and is equal to $$
\lim_{x\rightarrow +\infty}\frac{S_1(x)}{x}=\frac{1}{12}.
$$
Moreover, there exists an increasing sequence $(m_k)_{k \in \N} \subset \N$ such that $$ S_1(m_l) - \frac{m_l}{12} \sim \frac{1}{2} \log_2 m_l.$$
\end{thm}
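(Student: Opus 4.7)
My plan is to use the explicit characterization from Corollary~\ref{cor:bform}(c),
$$S_1=\bigl\{2^{2k+2}(8r+2t_r+3)+i:k,r\in\N,\ i\in\{0,1\}\bigr\},$$
as a unique parameterization of the elements of $S_1$: from $n\in S_1$ one recovers $i=n\bmod 2$, then $k$ from $2k+2=\nu_2(n-i)$ (since $8r+2t_r+3$ is odd), and finally $r$ from the quotient. This gives the level decomposition
$$S_1(x)=\sum_{k=0}^{K}N_k(x),\qquad N_k(x)=A\bigl(\lfloor x/4^{k+1}\rfloor\bigr)+A\bigl(\lfloor(x-1)/4^{k+1}\rfloor\bigr),$$
where $A(y)=\#\{r\in\N:8r+2t_r+3\leq y\}$ and $K+1=\lfloor\log_4(x/5)\rfloor$ is the number of contributing levels (levels $k>K$ contribute nothing, as the smallest element at level $k$ is $5\cdot 4^{k+1}$).

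The next step is a careful analysis of $A$. Since $8r+2t_r+3$ equals $8r+1$ when $t_r=-1$ and $8r+5$ when $t_r=1$, a case analysis with $y=8q+j$, $j\in\{1,\ldots,8\}$, yields an explicit formula for $A(y)$ in terms of $q$, $j$, and $t_q$. In particular $A(y)\in\{\lfloor y/8\rfloor,\lfloor y/8\rfloor+1\}$ for $y\geq 1$, and the oscillating error $\psi(y):=A(y)-y/8$ lies in $[-1/2,7/8]$. Writing $N_k(x)=x/(16\cdot 4^k)+\varepsilon_k(x)$ and summing the geometric main term gives
$$S_1(x)-\frac{x}{12}=-\frac{x}{12\cdot 4^{K+1}}+\sum_{k=0}^{K}\varepsilon_k(x).$$
By maximality of $K$, $x/4^{K+1}\in[5,20)$, so the truncation term $-x/(12\cdot 4^{K+1})$ lies in $(-5/3,-5/12]$.

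For the lower bound $S_1(x)-x/12>-5/3$, the truncation term already exceeds $-5/3$ strictly, so it suffices to control $\sum \varepsilon_k(x)$. The key observation is that $\psi(y)=-1/2$ requires $y\equiv 4\pmod 8$ with $t_{(y-4)/8}=1$; the recursion $y_{k+1}=\lfloor y_k/4\rfloor$ then forces $y_{k+1}$ to be odd, precluding $\psi(y_{k+1})=-1/2$, and pairing such a ``bad'' level with a neighboring ``good'' one produces a non-negative net contribution. For the upper bound $S_1(x)-x/12<\frac{1}{2}\log_2 x-\frac{19}{12}$, the naive estimate $\psi\leq 7/8$ yields only coefficient $7/8$ of $\log_2 x$, which is insufficient. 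Reaching the sharp coefficient $1/2$ requires the observation that $\psi(y_k)=7/8$ forces $y_k\equiv 1\pmod 8$ with $t_{q_k}=-1$, and the base-$4$ recursion on the digits of $x$ prevents this from happening at too many levels simultaneously; a careful tracking then yields $\sum 2\psi(y_k)\leq K+1+O(1)\leq \frac{1}{2}\log_2 x+O(1)$.

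Finally, for the existence of $(m_l)$ with $S_1(m_l)-m_l/12\sim\frac{1}{2}\log_2 m_l$, I would construct $m_l$ by prescribing its base-$4$ digit string so that at each scale $k\leq K$ the pair $(y_k\bmod 8,\,t_{q_k})$ realizes the largest per-level contribution consistent with the cross-level constraints identified above. A natural candidate is $m_l=\sum_{j=1}^{l}d_j 4^j+c_0$ for a suitable periodic digit pattern $(d_j)$, chosen so that the Thue-Morse values $t_{q_k}$ align; the asymptotic reduces to a direct geometric-like computation using the explicit formula for $A$. The main obstacle throughout is the sharp constant $1/2$ in the upper bound: beating the naive $7/8$ requires exploiting the rigid coupling between consecutive levels coming from $y_{k+1}=\lfloor y_k/4\rfloor$, and the very same rigidity is what makes the extremal sequence $(m_l)$ delicate to construct explicitly.
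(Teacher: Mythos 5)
Your architecture coincides with the paper's: the level decomposition over powers of $4$ coming from Corollary \ref{cor:bform}, the counting function $A$ (the paper's $P$), the summed function $Q(x)=\sum_k P(x/4^k)$, and the reduction of everything to bounding an oscillating error term are exactly the steps taken there, and your explicit formula for $A(8q+j)$ together with the range $\psi\in[-1/2,7/8]$ checks out. The problem is that both decisive estimates are left as assertions, and the one concrete mechanism you do offer is incorrect. For the lower bound you claim that a level with $\psi(y_k)=-1/2$ (that is, $y_k\equiv 4\pmod{8}$ with $t_{(y_k-4)/8}=1$) can be paired with a neighboring level so that the net contribution is non-negative. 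This fails: take $y_k=32q'+4$ with $t_{q'}=1$, so $\psi(y_k)=-1/2$; then $y_{k+1}=8q'+1$ with $t_{q'}=1$ gives $\psi(y_{k+1})=-1/8$ and the pair sums to $-5/8$, while the previous level $y_{k-1}=4y_k=8(16q'+2)$ gives $\psi(y_{k-1})=0$ and a pair sum of $-1/2$. Neither neighbor rescues the level, so the pairing argument as stated does not show that $\sum_k\varepsilon_k(x)$ is bounded below by a constant, which is what the bound $-5/3$ requires.

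For the upper bound the gap is more acute: you correctly note that the naive bound $\psi\le 7/8$ only yields the coefficient $7/8$ in front of $\log_2 x$ and that the coupling $y_{k+1}=\lfloor y_k/4\rfloor$ must be exploited, but the assertion that ``a careful tracking then yields $\sum 2\psi(y_k)\le K+1+O(1)$'' is precisely the content of the theorem and does not follow merely from the fact that two consecutive levels cannot both attain $7/8$. One must also exclude, for example, patterns such as $7/8,\,3/4,\,7/8,\dots$, and doing so requires following how $t_{q_{k+1}}$ is determined by $t_{q_k}$ together with the discarded base-$4$ digit. The paper settles both bounds at once by an induction on the binary length of $m$ applied to $R(m)=Q(m)-m/6$: for every relevant residue class of $m$ (modulo $8$, $16$, or $64$ as needed) it exhibits an identity $R(m)=R(n)+c$ with $L(n)\le L(m)-2$ and $0\le c\le 1/2$, which is exactly the amortized per-digit statement your sketch gestures at but does not prove. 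Your plan for the extremal sequence is likewise only a heuristic, whereas the paper's explicit choice $m_{l+1}=16m_l+36$, for which $R(m_l)=l$, completes that part in a few lines. In short: the skeleton is the paper's, but the core estimates are missing and the lower-bound pairing as stated is false.
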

\begin{proof}
For real $x \geq 0$ define
\begin{align*}
    P(x) &= \#\{ s \in \N: 8s + 2t_s + 3 \leq x   \}, \\
    Q(x) &= \sum_{k=0}^{\infty} P \left( \frac{x}{4^k} \right).
\end{align*}
By Corollary \ref{cor:bform}(b) and the relation $b(2n+1) = b(2n)$, we get
$$ S_1(x) =  Q \left(\frac{x}{4}\right) + Q \left(\frac{x-1}{4}\right). $$
Hence, it is sufficient to focus on the function $Q$. For $m \in \N$ and $i=0,1,2,3$ 
we have the recurrence relations 
$$ Q(4m+i) = Q(m) + P(4m+i). $$
Also, for $i < 8$ we have
$$
P(8m+i) = m +\begin{cases}
0 &\text{if } i=0, \\
T_m &\text{if } i=1,2,3,4, \\
1  &\text{if } i=5,6,7.
\end{cases}
$$
Put
$$  R(x) = Q(x) - \frac{x}{6},   $$
so that 
$$ S_1(x) -\frac{x}{12} = R\left( \left\lfloor \frac{x}{4} \right\rfloor \right) + R\left( \left\lfloor \frac{x-1}{4} \right\rfloor \right)    + \frac{\left\lfloor \frac{x}{4} \right\rfloor+\left\lfloor \frac{x-1}{4} \right\rfloor}{6} - \frac{x}{12}. $$
It is readily checked that
$$ -\frac{1}{3} < \frac{\left\lfloor \frac{x}{4} \right\rfloor+\left\lfloor \frac{x-1}{4} \right\rfloor}{6} - \frac{x}{12} \leq -\frac{1}{12} $$

Therefore, to obtain the estimates for $ S_1(x) -x/12$ (where $x \geq 9$), it remains to prove that for each integer $m \geq 2$ there holds
$$
    -\frac{2}{3} \leq R(m) \leq \frac{1}{4} \lfloor\log_2 m \rfloor - \frac{1}{4},
$$
as then
$$ -\frac{4}{3} \leq  R\left( \left\lfloor \frac{x}{4} \right\rfloor \right) + R\left( \left\lfloor \frac{x-1}{4} \right\rfloor \right) < 2 \cdot \frac{1}{4} \left( \log_2 \frac{x}{4} - 1 \right) = \frac{1}{2} \log_2 x -\frac{3}{2}. $$
This is done by induction on the length $L(m)$ of the binary expansion of $m$. Direct computation shows that our claim holds when $2 \leq L(m) \leq 6$. Hence, let $L(m) \geq 7$. It is sufficient to prove that there exists an integer $n \geq 2$ with $L(n) \leq L(m) - 2$ such that
$$ 0 \leq R(m) - R(n) \leq \frac{1}{2}.  $$
This is indeed the case, as shown by the following set of identities (ordered according to the residue class modulo $8$):
\begin{align*}
    R(8n) &= R(2n), \\
    R(16n+1)&=R(4n+1), \\
    R(16n+9)&=R(4n) + \frac{1}{2}, \\
    R(16n+2)&=R(4n+2), \\
    R(16n+10)&=R(4n) + \frac{1}{3}, \\
    R(16n+3)&=R(4n+3), \\
    R(16n+11)&=R(4n) + \frac{1}{6}, \\
    R(8n+4)&=R(2n+1) + T_n - \frac{1}{2}, \\
    R(64n+4)&=R(16n+4), \\
    R(64n+20)&=R(16n+2)+1-T_n, \\
    R(64n+36)&=R(16n)+1-T_n, \\
    R(64n+52)&=R(16n+4), \\
    R(16n+12)&=R(4n), \\
    R(8n+5)&=R(2n+1)+\frac{1}{3}, \\
    R(8n+6)&=R(2n+1)+\frac{1}{6}, \\
    R(8n+7)&=R(2n+1).
\end{align*}

We move on to the second part of the statement.
Define $m_0 = 0$ and $m_{l+1} = 16m_l + 36$ for $l \in \N$. First, we prove inductively that $R(m_l)= l$. This is clear for $l = 0$. In general, we have
$$ R(m_{l+1}) = R(16 m_l + 36) = R( 4m_l) + 1 - T_{m_l} = R(m_l) + 1,   $$
where we have used $4 \mid m_l$, the recurrence relations above and $T_{m_l} = 0$ (easily shown by induction). We thus have $S_1(m_0) = S_1(m_1) =0$ and for $l \geq 1$ the equality
\begin{align*}
    S_1(m_{l+1}) - \frac{m_{l+1}}{12} &= R\left( \left\lfloor \frac{m_{l+1}}{4} \right\rfloor \right) + R\left( \left\lfloor \frac{m_{l+1}-1}{4} \right\rfloor \right) - \frac{1}{6} \\
    &= R(m_{l}) + R(m_{l-1}) + 1 = 2l.
\end{align*}
The result follows.
\end{proof}

In Figure \ref{fig:S1} we show the graph of the function $S_1(x) - x/12$ in the range $[1,2^{10}]$ together with the bounds (as in the theorem).
\begin{figure}[h!]
\centering
\includegraphics[width=\textwidth]{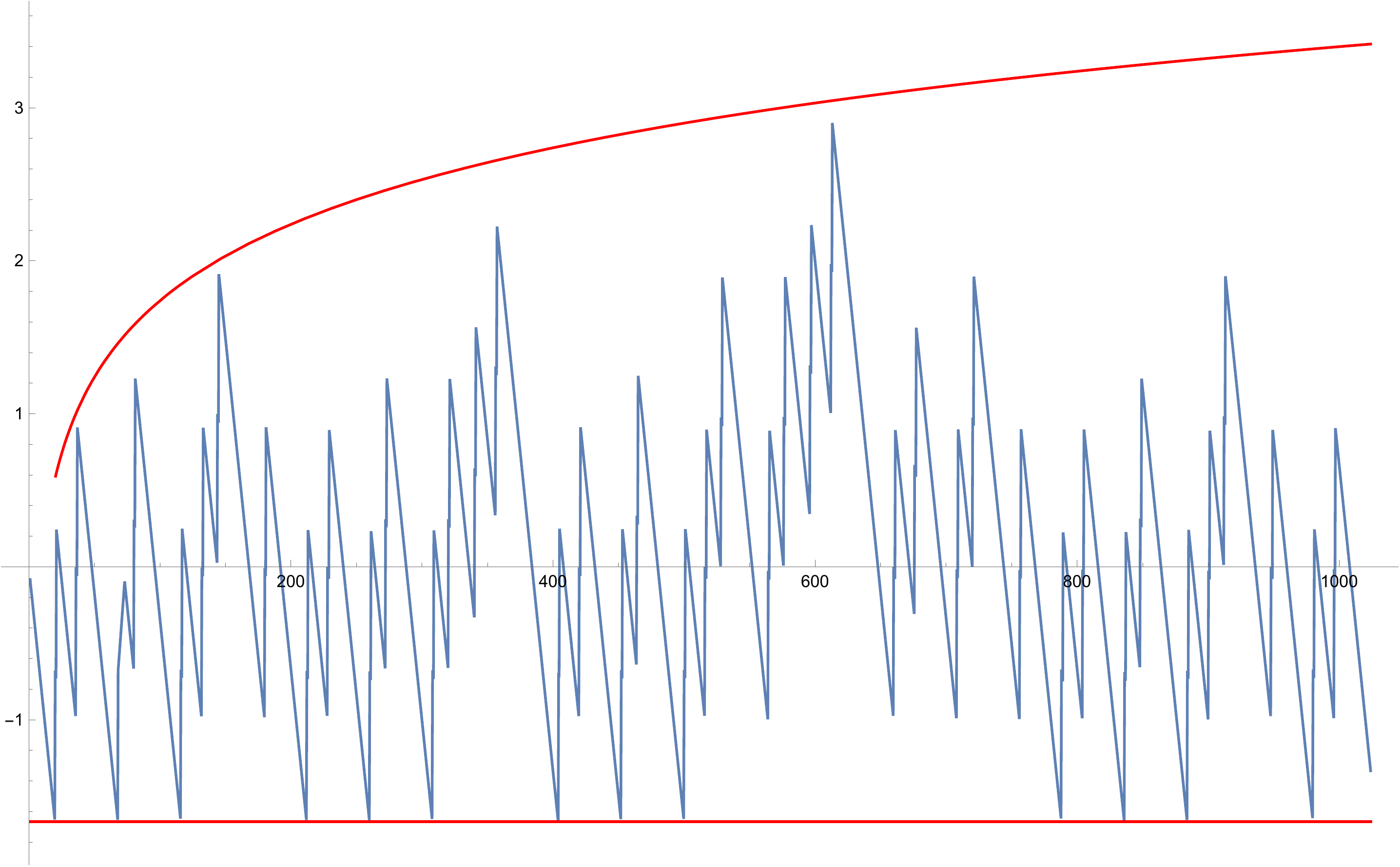}
\caption{The function $S_1(x) - x/12$.}
\label{fig:S1}
\end{figure}
From the presented graph, it appears it should be possible to obtain an even better additive constant in the upper bound. To do this, one would need to investigate closer the location of the ``spikes'' on the graph (some of which correspond to $x = m_l$).

The following two results show that the function $S_1(x)$ is exceptional in the sense that $S_1(x) - x/12$ is bounded from below by a constant. 

\begin{thm} \label{thm:S3(x)}
For all $x \geq 1$ we have
$$-\frac{1}{6} \log_2 x - \frac{7}{12} < S_{3}(x) - \frac{x}{12} \leq \frac{1}{6} \log_2 x - \frac{1}{6}.    $$
In particular, the density of the set $S_3$ in $\N$ exists and is equal to $$
\lim_{x\rightarrow +\infty}\frac{S_3(x)}{x}=\frac{1}{12}.
$$
Moreover, there exist increasing sequences $(m_l)_{l \in \N}, (n_l)_{l \in \N} \subset \N$ such that 
\begin{align*}
    S_3(m_l) - \frac{m_l}{12} &\sim \frac{1}{6} \log_2 m_l, \\
    S_3(n_l) - \frac{n_l}{12} &\sim -\frac{1}{6} \log_2 n_l.
\end{align*}
\end{thm}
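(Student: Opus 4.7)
The plan is to extend the method of the previous theorem (for $S_1$) to the four-indexed structure of $S_3$ furnished by Theorem \ref{thm:S3}. For each $i \in \{0,1,2,3\}$, I would set
$$P_i(y) = \#\{p \in \N: 8p + 2\lfloor i/2\rfloor + 3 + 2(-1)^i t_p \leq y\}, \qquad Q_i(y) = \sum_{k=0}^{\infty} P_i(y/4^k).$$
Since the constraint $k \in \N_{+}$ in Theorem \ref{thm:S3} gives $2^{2k+1} = 8 \cdot 4^{k-1}$ with $k-1 \geq 0$, one obtains
$$S_3(x) = \sum_{i=0}^{3} Q_i\Bigl(\frac{x-i}{8}\Bigr).$$
A direct case analysis on the sign of $t_p = 1 - 2T_p$ and on the parity of $p$ turns each $P_i$ into a piecewise formula of the shape $P_i(8m+j) = m + \eta_{i,j}(T_m)$, with $\eta_{i,j}(T_m) \in \{0,1,T_m,1-T_m\}$, completely analogous to the formula for $P$ used in the $S_1$ proof.

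Next, I would define $R_i(y) = Q_i(y) - y/6$, so that
$$S_3(x) - \frac{x}{12} = \sum_{i=0}^{3} R_i\Bigl(\frac{x-i}{8}\Bigr) + O(1),$$
where the $O(1)$ flooring contribution can be pinned down explicitly. From the formulas for $P_i$ and the self-similarity $Q_i(y) = P_i(y) + Q_i(y/4)$ one reads off a finite list of identities $R_i(2^a m + b) = R_{i'}(2^{a'} m + b') + \gamma_{a,b,i}$ at depths $a \in \{3,4,5,6\}$, exactly in the spirit of the long list displayed in the proof of the previous theorem. An induction on the binary length $L(m)$ then yields two-sided bounds
$$-\tfrac{1}{6}\log_2 m - \alpha \leq R_i(m) \leq \tfrac{1}{6}\log_2 m + \beta$$
for explicit constants $\alpha,\beta$; summing over $i$ and absorbing the flooring term gives the announced inequalities, and the density equality $S_3(x)/x \to 1/12$ is an immediate consequence.

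For the extremal sequences, I would scan the list of recurrences for an identity of the form $R_i(2^a m + b) = R_i(2^{a-2} m + b') + 1/6$ whose side-condition (for instance $T_m = 0$) is preserved under the substitution $m \mapsto 2^a m + b$; iterating such an identity from a suitable seed $m_0$ produces a sequence $(m_l)$ with $R_i(m_l) = l/6 + O(1)$ and $m_{l+1} \asymp 4\, m_l$, whence $R_i(m_l) \sim (1/6)\log_2 m_l$ and therefore $S_3(m_l) - m_l/12 \sim (1/6)\log_2 m_l$. A symmetric construction built on a recurrence with increment $-1/6$ delivers $(n_l)$. The main obstacle is precisely this two-sided control: unlike the $S_1$ case, where the lower bound is a constant and only upward ``spikes'' must be tracked, here the induction has to propagate upper and lower logarithmic bounds simultaneously, which forces the recurrence list to be taken at a binary depth deep enough that every residue class admits an identity with $|\gamma_{a,b,i}| \leq 1/6$. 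Identifying the precise residue patterns that sustain the upward and downward extremal sequences, and verifying that the required invariants (such as $T_{m_l} = 0$) persist under iteration, is the most delicate bookkeeping step.
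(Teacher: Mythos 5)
Your setup (the definition of the $P_i$, the identity $S_3(x)=\sum_{i=0}^3 Q_i\bigl(\tfrac{x-i}{8}\bigr)$, and the piecewise formulas $P_i(8m+j)=m+\eta_{i,j}(T_m)$) is correct, but the bounding strategy has a genuine gap that makes the announced constants unreachable. First, even if each $R_i$ did satisfy $-\tfrac16\log_2 m-\alpha\le R_i(m)\le\tfrac16\log_2 m+\beta$, summing four such bounds gives only $|S_3(x)-x/12|\le\tfrac23\log_2 x+O(1)$, four times weaker than the claimed $\tfrac16\log_2 x$; this would still give the density $1/12$, but not the stated inequalities nor the extremal asymptotics $\sim\pm\tfrac16\log_2$. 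Second, and worse, the individual $R_i$ do not admit symmetric two-sided logarithmic bounds centered at $0$: a short computation of the increments $R_i(4n+j)-R_i(n)=P_i(4n+j)-\tfrac n2-\tfrac j6$ shows that $R_0$ and $R_1$ have average increment $+\tfrac18$ per pair of binary digits while $R_2$ and $R_3$ have $-\tfrac18$, so each $R_i$ drifts like $\pm\tfrac1{16}\log_2 m$ and the proposed upper bound $\tfrac16\log_2 m$ for, say, $R_0$ is at best delicate and the lower bound for it is simply irrelevant to the true behavior. The four drifts cancel only in the sum, so no argument that bounds the $R_i$ one at a time can recover the constant $\tfrac16$.

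The missing idea, which is what the paper does, is to sum the four counting functions \emph{before} analyzing anything: for each $p$ the four numbers $8p+2\lfloor i/2\rfloor+3+2(-1)^i t_p$ ($i=0,1,2,3$) are exactly $8p+1,8p+3,8p+5,8p+7$ in some order, independently of $t_p$, so $P(n):=\sum_{i=0}^3P_i(n)=\lceil n/2\rceil$ and all Thue--Morse dependence disappears. With $Q(y)=\sum_{k\ge0}P(y/4^k)$ and $R(y)=Q(y)-2y/3$ one gets the unconditional recurrences $R(4n+i)=R(n)+\gamma_i$ with $\gamma_0=\gamma_3=0$, $\gamma_1=\tfrac13$, $\gamma_2=-\tfrac13$, from which the two-sided bound $-\tfrac16\lfloor\log_2 m\rfloor-\tfrac16\le R(m)\le\tfrac16\lfloor\log_2 m\rfloor+\tfrac13$ follows by a trivial induction, the sandwich $Q\bigl(\tfrac{x-3}{8}\bigr)\le S_3(x)\le Q\bigl(\tfrac x8\bigr)$ transfers it to $S_3$, and the extremal sequences are produced by iterating the residue $1$ (via $m_{l+1}=4m_l+8$) and the residue $2$ (via $n_{l+1}=4n_l+16$) with no side conditions on $T_m$ to verify. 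Your plan for the extremal sequences would otherwise be sound, but it inherits the same problem: the side conditions you would need to propagate exist only because you kept the four pieces separate.
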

\begin{proof}
For $i=0,1,2,3$ let
$$ P_i(x) = \#\{ n \in \N:  8n+2\left\lfloor\frac{i}{2}\right\rfloor+3+2(-1)^{i}t_{n} \leq x\}, $$
so that by Theorem \ref{thm:S3} we have
$$ S_3(x) = \sum_{k=1}^{\infty} \sum_{i=0}^3 P_i \left( \frac{x-i}{2 \cdot 4^k} \right). $$
This time, put
\begin{align*}
  P(x) &= \sum_{i=0}^3 P_i(x), \\
  Q(x) &= \sum_{k=0}^{\infty} P\left(\frac{x}{4^k}\right).
\end{align*}
Then for any $x$ we have
$$ Q\left( \frac{x-3}{8} \right) \leq  S_3(x) \leq Q\left( \frac{x}{8} \right). $$
Therefore, we need to bound the function $R(x) = Q(x) -2x/3$. First, for $n \in \N$ we have the easy to check equalities $P(n) = \lceil n/2 \rceil$  and
$$
R(4n+i) = R(n) + \begin{cases}
0 &\text{for } i =0,3, \\
\frac{1}{3} &\text{for } i =1, \\
-\frac{1}{3} &\text{for } i =2.
\end{cases}
$$
In a similar fashion as in the previous proof, one can then prove that for $m \in \N$ there holds
$$ -\frac{1}{6} \lfloor \log_2 m \rfloor - \frac{1}{6} \leq R(m) \leq  \frac{1}{6} \lfloor\log_2 m \rfloor + \frac{1}{3}  $$
The inequalities for $S_{3}(x)-x/12$ follow shortly by plugging in $m=\lfloor x/8 \rfloor$ and $m=\lfloor (x-3)/8 \rfloor$.

If we define $m_0=0$ and $m_{l+1} = 4m_l + 8$, we can inductively compute $R(m_l/8) = l/3$, and therefore
$$ S(m_l) - \frac{1}{12} m_l \sim  Q\left(\frac{m_l}{8}\right) - \frac{1}{12} m_l = \frac{l}{3} \sim \frac{1}{6} \log_2 m_l.  $$
Similarly, for $n_0 = 0$ and $n_{l+1} = 4n_l + 16$, we get $R(n_l/8) = -l/3$ so
$$ S(n_l) - \frac{1}{12} n_l \sim  -\frac{1}{6}\log_2 n_l,  $$
and the proof is finished.
\end{proof}

\begin{figure}[h!]
\centering
\includegraphics[width=\textwidth]{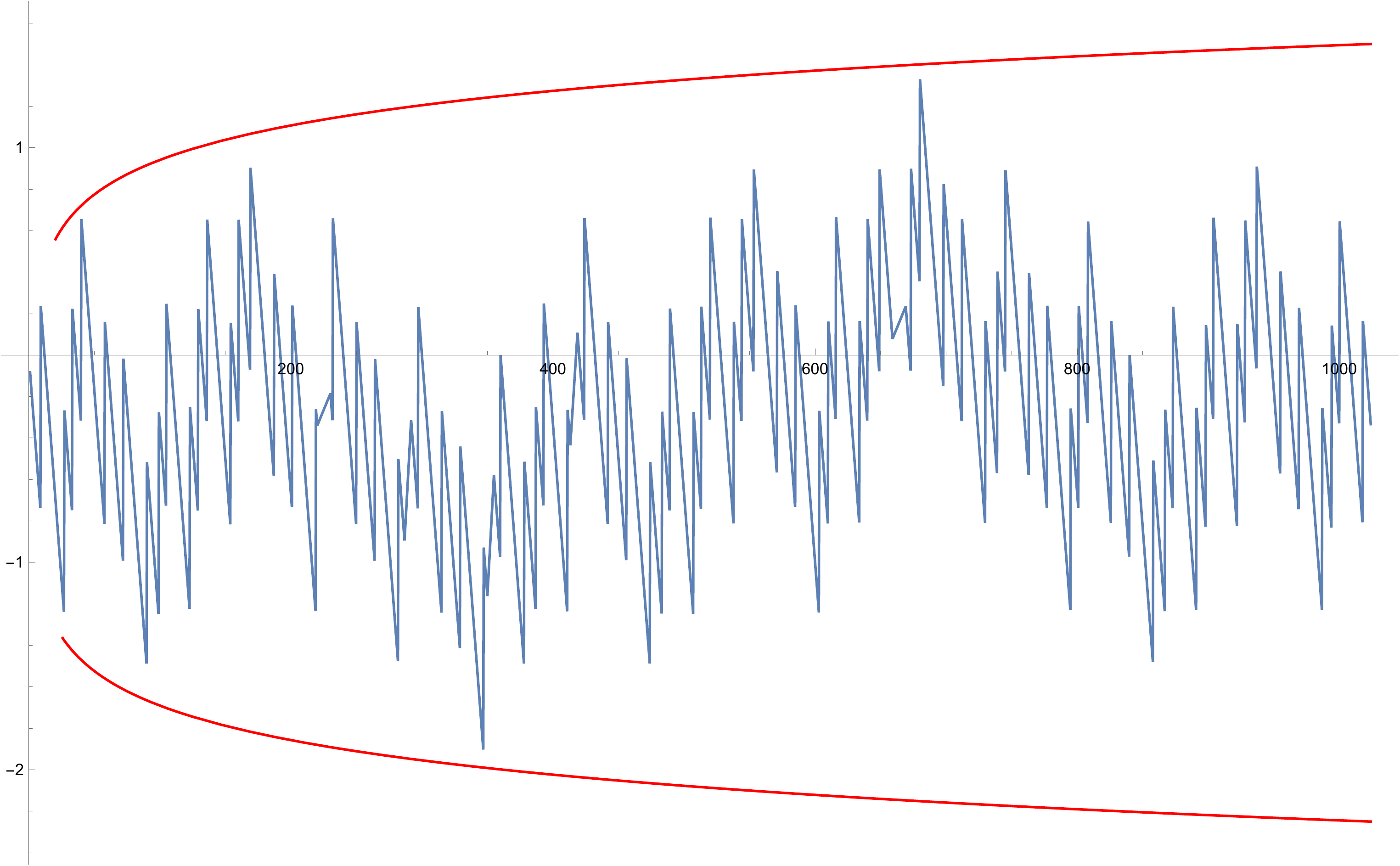}
\caption{The function $S_3(x) - x/12$.}
\label{fig:S3}
\end{figure}
Figure \ref{fig:S3} shows the graph of the function $S_1(x) - x/12$ in the range $[1,2^{10}]$ together with the proved bounds (in red). Again, the bounds are quite accurate, though the additive constants can probably be improved further.

The final result of this section concerns the function $S_{2^k-1}(x)$. For the sake of clarity, in the proof we make some rough estimates concerning the additive constant (although the constant near $\log_2 x$ remains optimal).
\begin{thm} \label{thm:S2k(x)}
If $k \geq 3$, then for all $x \geq 2^k$ we have
$$ \left|S_{2^k-1}(x) - \frac{x}{6} \right| \leq \frac{2^{k-2}}{3} (\log_2 x - k +26).  $$
In particular, the density of the set $S_{2^k-1}$ in $\N$ exists and is equal to $$
\lim_{x\rightarrow +\infty}\frac{S_{2^k-1}(x)}{x}=\frac{1}{6}.
$$
Moreover, there exist increasing sequences $(m_l)_{l \in \N}, (n_l)_{l \in \N} \subset \N$ such that 
\begin{align*}
    S_{2^k-1}(m_l) - \frac{m_l}{6} &\sim \frac{2^{k-2}}{3} \log_2 m_l, \\
    S_{2^k-1}(n_l) - \frac{n_l}{6} &\sim -\frac{2^{k-2}}{3} \log_2 n_l.
\end{align*}
\end{thm}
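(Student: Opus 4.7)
The plan follows the blueprint of the proofs for $S_1$ and $S_3$: decompose $S_{2^k-1}(x)$ into contributions from $n < 2^k$ and $n \geq 2^k$, reduce the latter to counting a simpler arithmetic quantity, and bound its deviation from the mean using a self-similar recurrence. By Corollary \ref{cor:n<2^k}, at most $2^{k-1}$ elements of $S_{2^k-1}$ lie below $2^k$, and this is absorbed into the additive error. For $n \geq 2^k$, Corollary \ref{cor:n>=2^k}(c) describes such elements of $S_{2^k-1}$ as $n = 2^k m + l$ with $m \geq 1$, $0 \leq l < 2^k$, $\nu_2(m)$ odd, and $t_m = t_l$. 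Since $2^{k-1}$ integers $l \in \{0,1,\dots,2^k-1\}$ satisfy $t_l = 1$ and $2^{k-1}$ satisfy $t_l = -1$, the constraint $t_m = t_l$ leaves exactly $2^{k-1}$ admissible values of $l$ for each valid $m$. Writing $M = \lfloor x/2^k \rfloor$, this yields
$$
    S_{2^k-1}(x) = 2^{k-1} F(M-1) + O(2^k),
    \qquad F(N) := \#\{\, m \in \{1,\dots,N\} : \nu_2(m) \equiv 1 \pmod 2 \,\}.
$$

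The technical heart of the argument is a sharp bound on $R(N) := F(N) - N/3$. From $F(2N) = N - F(N)$ and $F(2N+1) = F(2N)$ one derives
$$
    R(2N) = -R(N), \qquad R(2N+1) = -R(N) - \tfrac{1}{3},
$$
and iterating along the binary expansion $N = \sum_{i \geq 1} b_i\, 2^{i-1}$ produces the closed form
$$
    R(N) = \frac{1}{3} \sum_{i \geq 1} (-1)^i b_i.
$$
The alternation of signs is the decisive structural feature: bits in even positions contribute $+1$ and bits in odd positions contribute $-1$, so at most half the digits of $N$ actually drive the deviation. This yields the sharp estimate $|R(N)| \leq \tfrac{1}{6}(\lfloor \log_2 N \rfloor + 2)$. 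Without this sharpening the final bound would be off by a factor of two.

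Substituting into the first-step reduction and carefully tallying the $O(2^k)$ contributions from the boundary at $m = M$, the small-index part, and the discrepancy between $x/6$ and $2^{k-1} M/3$, one arrives at $|S_{2^k-1}(x) - x/6| \leq \tfrac{2^{k-2}}{3}(\log_2 x - k + 26)$; the value $26$ is a comfortable round bound on the cumulative additive constant rather than a tight one. The density assertion is then immediate.

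For the extremal sequences, set $m_l := 2^k \cdot \tfrac{2 \cdot 4^l + 1}{3}$ and $n_l := 2^k \cdot \tfrac{4^l + 2}{3}$. Then $\lfloor m_l / 2^k \rfloor - 1 = \tfrac{2(4^l - 1)}{3}$ has binary representation $(10)^l$, while $\lfloor n_l / 2^k \rfloor - 1 = \tfrac{4^l - 1}{3}$ has binary representation $(01)^l$. The closed form therefore gives $R(\lfloor m_l/2^k \rfloor - 1) = l/3$ and $R(\lfloor n_l/2^k \rfloor - 1) = -l/3$. Both $\lfloor m_l/2^k \rfloor$ and $\lfloor n_l/2^k \rfloor$ are odd, killing the boundary contribution at $m = M$, so the first-step identity gives $S_{2^k-1}(m_l) - m_l/6 = 2^{k-1}(l-1)/3 + O(2^{k-1})$ and analogously for $n_l$. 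Combining with $\log_2 m_l \sim 2l + k$ produces the required asymptotics $\pm (2^{k-2}/3) \log_2 m_l$. \emph{The main obstacle} is obtaining the factor $1/6$ (rather than $1/3$) in the bound for $|R(N)|$: it forces one to exploit the sign alternation in the recurrence rather than bound term by term, and it is precisely this cancellation that makes $2^{k-2}/3$, and not $2^{k-1}/3$, the correct leading constant.
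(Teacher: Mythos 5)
Your proposal is correct, and its technical core differs from the paper's in an instructive way. The paper keeps the Prouhet--Thue--Morse condition $t_m=t_l$ alive throughout: it introduces the refined counts $Q_{\pm 1}(y)=\#\{1\le m\le y:\ t_m=\pm 1,\ \nu_2(m)\equiv 0 \pmod 2\}$, derives an eight-case system of self-similar recurrences for $R_{\varepsilon}(n)=Q_{\varepsilon}(n)-n/3$, and proves $|R_\varepsilon(n)|\le \frac{1}{12}\lfloor\log_2 n\rfloor+\frac{2}{3}$ by induction, only summing $Q_1+Q_{-1}$ at the very end. You instead observe at the outset that summing over all residues $l<2^k$ makes the condition $t_m=t_l$ average out exactly (each admissible $m$ contributes exactly $2^{k-1}$ values of $l$), which collapses the whole problem to the single function $F(N)=\#\{1\le m\le N:\ \nu_2(m)\equiv 1\pmod 2\}$; your closed form $F(N)-N/3=\frac{1}{3}\sum_i(-1)^i b_i$ over the binary digits of $N$ is correct (it follows from $R(2N)=-R(N)$, $R(2N+1)=-R(N)-\frac13$), yields the same optimal leading constant $\frac{2^{k-2}}{3}$ with far less case analysis, and makes the extremal sequences transparent as the alternating digit patterns $(10)^l$ and $(01)^l$. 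What the paper's finer bookkeeping buys is information refined by the value of $t_l$ (or by individual residues $l$), which your averaged reduction deliberately discards; for the theorem as stated nothing is lost. One small slip: $\lfloor n_l/2^k\rfloor=\frac{4^l+2}{3}$ is even (indeed has $2$-adic valuation $1$), not odd as you claim, so the boundary term at $m=M$ for the sequence $(n_l)$ is not literally killed --- but since $n_l=2^kM$ exactly, only $l=0$ can sit at the boundary and the contribution is at most $1$, which is harmless for the asymptotics.
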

\begin{proof}
For $\varepsilon \in \{1,-1\}$ and non-negative $x \in \R$ we put
\begin{align*}
    P_{\varepsilon}(x) &= \#\{1 \leq m \leq x: t_m = \varepsilon   \}, \\
    Q_{\varepsilon}(x) &= \sum_{s=0}^{\infty} (-1)^s P_{\varepsilon} \left(  \frac{x}{2^s}  \right) = \#\{1 \leq m \leq x: t_m = \varepsilon \text{ and } \nu_2(m) \equiv 0 \pmod*{2}   \}.
\end{align*} 
Then by Corollary \ref{cor:n>=2^k} we get
\begin{align*}
S_{2^k-1}(x) &= S_{2^k-1}(2^k-1) +  \sum_{l=0}^{2^k-1} \# \{  1 \leq m  \leq \frac{x-l}{2^k}:  t_m = t_l \text{ and } \nu_2(m) \equiv 1 \pmod*{2}  \}  \\
&= 2^{k-2} + \sum_{l=0}^{2^k-1} Q_{t_l}\left(\frac{x-l}{2^{k+1}} \right), 
\end{align*}
where $S_{2^k-1}(2^k-1) = 2^{k-2}$ follows from Corollary \ref{cor:n<2^k}. Furthermore, we have the obvious inequality 
$$ 0 \leq \sum_{l=0}^{2^k-1} Q_{t_l} \left( \frac{x}{2^{k+1}} \right)  - \sum_{l=0}^{2^k-1} Q_{t_l} \left( \frac{x-l}{2^{k+1}} \right) \leq 2^k.   $$
Since for each $\varepsilon = \pm 1$ we have $t_l = \varepsilon$ for precisely $2^{k-1}$ indices $l$, we obtain
\begin{equation}  \label{eq:S2k}
\left| S_{2^k-1}(x)  - 2^{k-1} \left( Q_1\left( \frac{x}{2^{k+1}} \right) +  Q_{-1}\left( \frac{x}{2^{k+1}} \right) \right) \right|
    =5 \cdot 2^{k-2}.
\end{equation}  
Therefore, to bound $ S_{2^k-1}(x) - x/6$ it remains to estimate for $\varepsilon =\pm 1$ the functions
$$ R_{\varepsilon}(x) = Q_{\varepsilon}(x) - \frac{x}{3}.$$
First, note that for $n \in \N$ we have
$$
P_{\varepsilon}(n) = \frac{n-\varepsilon}{2} + \begin{cases} 
\frac{\varepsilon}{2} t_n &\text{if } 2\mid n, \\
0 &\text{if } 2\nmid n.
\end{cases}
$$
It follows that
$$ R_{\varepsilon}(4n+i) = R_{\varepsilon}(n) + \begin{cases}
 0 &\text{if } i=0,3, \\
 \frac{1}{2}(1-\varepsilon t_n) - \frac{1}{3} &\text{if } i=1, \\
  \frac{1}{2}(1-\varepsilon t_n) - \frac{2}{3} &\text{if } i=2.
\end{cases}  $$
This leads to the relations
\begin{align*}
    R_{\varepsilon}(4n) &= R_{\varepsilon}(n), \\
    R_{\varepsilon}(8n+1) &= R_{\varepsilon}(2n+1), \\
    R_{\varepsilon}(16n+5) &= R_{\varepsilon}(n) + \frac{1}{3}, \\
    R_{\varepsilon}(16n+13) &= R_{\varepsilon}(4n+1), \\
    R_{\varepsilon}(16n+2) &= R_{\varepsilon}(4n+2), \\
    R_{\varepsilon}(8n+6) &= R_{\varepsilon}(2n), \\
    R_{\varepsilon}(16n+10) &= R_{\varepsilon}(n)-\frac{1}{3}, \\
    R_{\varepsilon}(4n+3)&=R_{\varepsilon}(n).
\end{align*}
By induction we obtain for $\varepsilon = \pm 1$ and all $n \in \N_+$ the inequality
$$ |R_{\varepsilon}(n)| \leq \frac{1}{12} \lfloor \log_2 n \rfloor + \frac{2}{3},    $$
which implies  
$$ \left| Q_{\varepsilon}(x) - \frac{x}{3} \right| \leq \frac{1}{12}\log_2 x + 1   $$
for all $x \geq 1$. The main part of the result follows shortly.

Finally, put $m_0 = 0$ and $m_{l+1} = 16m_l + 5 \cdot 2^{k+1}$. Also, let $\alpha = (k+1) \bmod{2}$. Using the fact that $2^{k+1} \mid m_l$, from the recurrence relations for $R_{\varepsilon}$ we get
\begin{align*}
    R_{\varepsilon}(2^{\alpha}m_{l+1}) &= R_{\varepsilon}(2^{4+\alpha} m_l + 5 \cdot 2^{k+1 + \alpha})  = R_{\varepsilon}(2^{3-k} m_l + 5) \\
    &= R_{\varepsilon}(2^{-1-k} m_l) + \frac{1}{3} =  R_{\varepsilon}(2^{\alpha}m_l) + \frac{1}{3}.
\end{align*}
It follows that
\begin{align*}  
Q_{\varepsilon}\left( \frac{m_l}{2^{k+1}} \right) - \frac{m_l}{3 \cdot 2^{k+1}} = R_{\varepsilon}(2^{\alpha}m_l) = \frac{l}{3} \sim \frac{1}{12} \log_2 m_l,
\end{align*}
and it remains to use \eqref{eq:S2k}.

Similarly, we can take $n_0 = 0$ and $n_{l+1} = 16n_l + 10 \cdot 2^{k+1}$.
\end{proof}


\section{Computational results, questions, problems and conjectures}\label{sec7}

In this section, we discuss possible directions for further research and present some conjectures and computational results. 

To begin, recall that in Section \ref{sec3} we have defined $(f_n)_{n \in \N}$ to be the increasing sequence such that $S_1' = \{f_n: n \in \N\}$, and asked whether it is regular. We have performed some experimental computations in Mathematica 13 with the help of the \texttt{IntegerSequences} package by Eric Rowland, available at \url{https://ericrowland.github.io/packages.html}. More precisely, for each $m \leq 30$ we have used the  \texttt{FindRegularSequenceRecurrence} function, which did not find a finite set of (plausible) $\Z$-linear relations between the elements of the $m$-kernel $K_m((f_n)_{n \in \N})$. Hence, we expect that following conjecture holds.

\begin{conj}
The sequence $(f_{n})_{n\in\N_{+}}$ is not $m$-regular for any $m \geq 2$. 
\end{conj}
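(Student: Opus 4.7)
The plan is to recast the conjecture in terms of the bounded gap sequence $g_n = f_{n+1} - f_n$ and then preclude automaticity of $(g_n)$ in every base $m \geq 2$ using the 2-adic hierarchical structure of $S_1'$. First observe the equivalence: $(f_n)$ is $m$-regular $\iff$ $(g_n)$ is $m$-regular $\iff$ $(g_n)$ is $m$-automatic. Indeed, differences of $m$-regular sequences are $m$-regular, partial sums of $m$-automatic sequences are $m$-regular, and a bounded integer-valued $m$-regular sequence is automatically $m$-automatic. So the conjecture becomes: $(g_n)$ is not $m$-automatic for any $m \geq 2$.

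The structural input I would exploit is the decomposition $S_1' = \bigsqcup_{k \geq 0} 2^{2k+1}A$ with $A = \{8r + 2t_r + 3 : r \in \N\}$ of density $1/8$, which partitions $S_1'$ by 2-adic valuation: elements of $2^{2k+1}A$ have $\nu_2 = 2k+1$ and contribute density $2^{-2k-4}$ to $\N$. Enumerating $S_1'$ in increasing order therefore requires interleaving all these scaled copies, and the order statistics are controlled by the PTM sequence at every 2-adic scale simultaneously. For each $k$ I would study the ``level-$k$ crossings'' of $(f_n)$: the local block of gaps straddling an index $n$ with $\nu_2(f_n) = 2k+1$ has length $\Theta(4^k)$ and encodes, via the recurrences in Corollary \ref{cor:bform}(d) and the PTM values $t_r$ at scale $r \sim 2^{2k}$, information that grows non-trivially with $k$.

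To extract a contradiction from a hypothetical DFAO with $N$ states in base $m$ generating $(g_n)$, I would construct, for each $k$, a family of at least $2^{ck}$ pairwise distinguishable length-$O(4^k)$ factors of $(g_n)$ occurring at positions whose base-$m$ representations have length $O(k)$; this would force the automaton to have $\Omega(2^{ck})$ distinct reachable states, contradicting finiteness. The main obstacle is arranging the positions to have controlled base-$m$ length while still capturing enough PTM-driven variation, uniformly in $m$. The case $m = 2$, where the 4-adic structure of $S_1'$ is perfectly compatible with the base, is the most delicate and should be attacked first; I would try to extend the authors' computational experiments into a provable lower bound on the dimension of the $\Z$-span of the $2$-kernel of $(f_n)$ up to a fixed depth. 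For $m$ not a power of $2$, a cleaner contradiction may be available via Cobham-type rigidity, combined with the separate verification (which should follow from the aperiodicity of $(t_n)$) that $(g_n)$ is not eventually periodic.
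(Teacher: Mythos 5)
This statement is a conjecture in the paper: the authors give no proof, only numerical evidence obtained with Rowland's \texttt{IntegerSequences} package, so there is no proof of theirs to compare against. Your opening reduction is correct and worth keeping: since $m$-regular sequences are closed under shifts and differences, since a bounded integer-valued $m$-regular sequence is $m$-automatic, and since partial sums of $m$-automatic sequences are $m$-regular, the conjecture is indeed equivalent to the assertion that the gap sequence $(g_n)$ is not $m$-automatic for any $m \geq 2$; this matches the equivalence the authors themselves record in the case $m=2$.

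Beyond that, however, your text is a research plan rather than a proof, and the hard step is exactly the one you defer. You assert that the block of gaps near an index $n$ with $\nu_2(f_n)=2k+1$ ``encodes information that grows non-trivially with $k$'' and that one can produce $2^{ck}$ pairwise distinguishable factors at positions whose base-$m$ addresses have length $O(k)$; nothing in the proposal substantiates either claim, and you acknowledge the obstacle yourself. A cautionary point: each layer $U_k=\{2^{2k+1}(8s+2t_s+3):s\in\N\}$ separately \emph{does} have a $2$-regular enumeration (the paper notes this), so any genuine obstruction must come from the interleaving of the layers, which your counting heuristic does not yet isolate. Finally, the Cobham-type route you suggest for $m$ not a power of $2$ fails as stated: Cobham's theorem needs automaticity in two multiplicatively independent bases, so to conclude eventual periodicity of $(g_n)$ you would have to already know that $(g_n)$ is $2$-automatic --- which is precisely what the conjecture denies and what your own $m=2$ case is trying to refute. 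As it stands, the proposal establishes only the (correct) reformulation; the conjecture remains open.
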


On the other hand, note that we if we consider the decomposition 
$$
S_1'=\bigcup_{k=0}^{\infty}U_{k}
$$
into pairwise disjoint sets $U_{k}=\{2^{2k+1}(8s+2t_{s}+3):\;s\in\N\}$, then for each $k\in\N$ the sequence $(2^{2k+1}(8s+2t_{s}+3))_{s\in\N}$ is 2-regular. 

\bigskip

Next, it is natural to ask whether it is possible to obtain results on the representation of $b_m(n)$ as a sum of three squares for any $m \in \N_+$.
\begin{prob}
Characterize the set $S_m$ for $m \in \N_+$.
\end{prob}
If the valuations $\nu_2(b_m(n))$ are bounded, then the direct approach used in this paper, namely reduction modulo a fixed power of $2$, is sufficient to give a complete description of $S_m$. The following proposition implies that in this case $S_m$ is a $2$-automatic set (its characteristic sequence is $2$-automatic).

\begin{prop} \label{prop:reduction_2p}
For each $m \in \N_+$ and $p \in \N$ the sequence $(b_m(n) \bmod{2^p})_{n \geq 0}$ is $2$-automatic.
\end{prop}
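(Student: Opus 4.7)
The plan is to prove that the $2$-kernel of $(b_m(n) \bmod 2^p)_{n \geq 0}$ is a finite set, which is equivalent to $2$-automaticity. The starting point is the Mahler functional equation
$$(1-x)^m B_m(x) = B_m(x^2),$$
obtained by raising $(1-x)B(x)=B(x^2)$ to the $m$-th power. By \cite[Lemma 4.7]{GMU} (stated for general $m$ and applied in the paper with $m=3$), one has for every $a \in \N$ and $b \in \{0, 1, \ldots, 2^a - 1\}$ an identity
$$(1-x)^{ma} \sum_{n \geq 0} b_m(2^a n + b) x^n = P^{(m)}_{a,b}(x) B_m(x),$$
with $P^{(m)}_{a,b}(x) \in \Z[x]$. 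Consequently, every sequence in the $2$-kernel of $(b_m(n))$ has generating function of the form $P(x) B_m(x)/(1-x)^c$ for some $P \in \Z[x]$ and $c \in \N$.

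The plan is then to invoke the general theory of Mahler-type functional equations. Viewing the functional equation in $(\Z/2^p\Z)[[x]]$, the coefficient of $B_m(x)$ is $(1-x)^m$, whose constant term $1$ is a unit in $\Z/2^p\Z$. By a theorem of Becker on Mahler-type equations over a commutative ring (see \cite{AS03a} for an exposition in the context of automatic sequences), the reduction of $(b_m(n))$ modulo $2^p$ is a $2$-regular sequence over $\Z/2^p\Z$. Since $\Z/2^p\Z$ is finite, this sequence takes finitely many values, and hence by the Allouche--Shallit characterization \cite{AS92} of $2$-automatic sequences as $2$-regular sequences with finite image, it is $2$-automatic.

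The main obstacle, if one wishes to give a self-contained proof avoiding an appeal to an external theorem on Mahler functions, lies in verifying directly that the $(\Z/2^p\Z)$-submodule of $(\Z/2^p\Z)[[x]]$ generated by the reductions of $P(x) B_m(x)/(1-x)^c$ (as $P \in \Z[x]$ and $c \in \N$ vary) is finite. A useful ingredient in such an argument is the congruence
$$(1-x)^{2^{p-1}} \equiv (1+x)^{2^{p-1}} \pmod{2^p},$$
which holds because $\nu_2\binom{2^{p-1}}{k} = p-1$ for odd $k$ with $0 < k < 2^{p-1}$ by Lemma \ref{lem:binomial_valuation}. Together with the Mahler equation, this can be exploited to stabilize the denominator $(1-x)^c$ modulo $2^p$ once $c$ is large enough, so that only finitely many essentially distinct denominators occur, and a finite generating set for the module can be explicitly exhibited.
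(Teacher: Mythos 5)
Your reduction to finiteness of the $2$-kernel and your final step (a $2$-regular sequence attaining finitely many values is $2$-automatic) are fine, but the central step --- the appeal to Becker's theorem --- does not go through. Becker's theorem yields $k$-regularity for power series satisfying an equation of the form $f(x)=\sum_{i\geq 1}p_i(x)f(x^{k^i})$ with \emph{polynomial} coefficients, i.e.\ an equation already solved for $f(x)$. The equation here is $(1-x)^mB_m(x)=B_m(x^2)$, and to put it in that form you would have to multiply by $(1-x)^{-m}$, which is not a polynomial over $\Z/2^p\Z$. The hypothesis you actually invoke --- that the coefficient of $B_m(x)$ has unit constant term --- is not sufficient for regularity: over $\Z$ the series $B(x)$ satisfies $(1-x)B(x)=B(x^2)$ with constant term $1$, yet $(b(n))_{n\in\N}$ is not $2$-regular, since $\log_2 b(n)\sim\tfrac{1}{2}(\log_2 n)^2$ grows faster than any power of $n$ while $2$-regular integer sequences grow at most polynomially. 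So any correct argument must use the finiteness of $\Z/2^p\Z$ in an essential way, and that is exactly what your closing sketch leaves open: the congruence $(1-x)^{2^{p-1}}\equiv(1+x)^{2^{p-1}}\pmod{2^p}$ does not stabilize the denominators, because the reductions of $(1-x)^{-c}$ modulo $2^p$ take infinitely many distinct values as $c$ ranges over $\N$ (already the coefficients of $x$ and $x^2$ distinguish $c$ from $c+2^p$), and the degrees of the numerators $P_{a,b}$ grow with $a$. Finiteness of the generated module is therefore precisely the unproven point, not a technicality.

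The paper avoids the functional equation altogether: choosing $k\geq p-1$ with $2^k\geq m$, it writes $B_m(x)=B_{2^k}(x)\,T(x)^{2^k-m}$, where $T=1/B$ generates the PTM sequence. By Lemma \ref{lem:b_m_congruence}, the sequence $b_{2^k}(n)\bmod 2^p$ agrees with a finite sum of binomial coefficients and hence is eventually zero; the PTM sequence is $2$-automatic; and the Cauchy product of $2$-regular sequences over $\Z/2^p\Z$ is again $2$-regular. To salvage your route you would need either this convolution trick or a genuinely new finiteness argument for the module generated by the kernel.
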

\begin{proof}
Take any  $k \geq p-1$ such that $2^k \geq m$. Note that $(b_m(n))_{n \geq 0}$ is the convolution of the sequence $(b_{2^k}(n))_{n \geq 0}$ with $2^k-m$ copies of the PTM sequence $(t_n)_{n \geq 0}$. They are both $2$-regular when treated as sequences over the ring $\Z/2^p\Z$ (for $(b_{2^k}(n))_{n \geq 0}$ this follows from Lemma \ref{lem:b_m_congruence}). Hence, $(b_m(n) \bmod{2^p})_{n \geq 0}$ is $2$-regular as the convolution of $2$-regular sequences. The result follows from the fact that a $2$-regular sequence attaining finitely many values is necessarily $2$-automatic.
\end{proof}
Unfortunately, we do not know even for a single value $m \neq 2^k-1$, whether or not the valuations $\nu_2(b_m(n))$ are bounded. It is conjectured that they are unbounded for all $m \neq 2^k-1$ (see \cite[Conjecture 5.3]{GMU}). Nevertheless, this does not rule out $2$-automaticity of the set $S_m$. Surprisingly, numerical results for $m \leq 30$ (obtained with help of the \texttt{IntegerSequences} package) suggest that the sets $S_m$ are $2$-automatic for odd $m$, except for $m=17,21$.




It should be possible to get some partial results if we restrict our attention to arithmetic progressions along which $\nu_2(b_m(n))$ is bounded. For example, \cite[Theorem 5.4]{GMU} provides a collection of suitable arithmetic progressions such that $\nu_2(b_2(2^rn +s))$ is constant. By Proposition \ref{prop:reduction_2p}, the set of $n$ such that $b_2(2^rn +s)$ is a sum of three squares, is $2$-automatic.

A related interesting problem concerns the behavior of $b_m(n)$ modulo a fixed power of $2$. 
\begin{prob}
For $m \in \N_+$ and $p \in \N$ characterize $b_m(n) \bmod{2^p}$. 
\end{prob}
We already know that this sequence is $2$-automatic and may ask whether it can be characterized in terms of simpler $2$-automatic sequences. 
The congruences obtained in the previous sections for subsequences of the form $b_{2^k-1}(2^r n + s)$ are all ``admissible'' in the sense of \cite{HL}, that is, only involve $t_n$ and $(-1)^{\nu_2(n)}$. In the case $k \geq 3$ Theorem \ref{thm:b2k_mod32} provides a congruence in terms of $t_n, t_{n-1}$ that can be transformed into an admissible one due to the relation $t_{n-1} = (-1)^{\nu_2(n)+1} t_n$. 

It turns out that other interesting $2$-automatic sequences already appear if we consider $b_m(n)$ modulo suitable powers of $2$. By inspecting modulo $32$ the subsequences described in Proposition \ref{prop:b_mod16},  we have found (without proof) the following set of congruence relations:
\begin{align*}
    b(8n+2) &\equiv 2t_n+16\sigma_n \pmod{32}, \\
    b(8n+6) &\equiv 6t_n + 16\sigma_n+16n \pmod{32}, \\
    b(16n+8) &\equiv (10 + 8n^2)t_n +16\sigma_n \pmod{32},
\end{align*}
where $\sigma_n$ counts modulo $2$ the number of blocks of contiguous $1$'s in the binary expansion of $n$. We have later learned that Alkauskas \cite[Theorem 2]{Alk} obtained a set of relations that describe the same sequences and involve the Rudin--Shapiro sequence instead of $(\sigma_n)_{n \in \N}$. It can be checked that both descriptions are equivalent.

Another sequence that arises in this way is the regular paperfolding sequence $(p_n)_{n \in \N_+}$ defined by $p_{2n} = p_n$ and $p_{2n+1} = (-1)^n$ (see for example \cite[Example 5.1.6]{AS03a}). If we let $P(x)= \sum_{n \geq 1} p_n x^n$, then through manipulation of power series, for $m$ even one can obtain the congruence relation
$$ B_m(x) \equiv (1-x)^m (1+2mP(x)) \pmod*{2^{\nu_2(m)+3}}.  $$
This is essentially a generalization of Lemma \ref{lem:b_m_congruence}.



We now consider some natural modifications of the original equation $b_m(n) = x^2+y^2+y^2$.
We have obtained precise characterization of those $n\in\N$ such that $b(2n)$ is a sum of three squares. In particular, the set of such numbers has natural density equal to $11/12$. Analyzing, for a given $n$ not of the form $2^{2k+1}(8s+2t_{s}+3)$, the solution set $(x, y, z)$ of the equation $b(2n)=x^2+y^2+z^2$, we found that in many cases one of the values $x, y, z$ is a square, i.e., the Diophantine equation
$$
b(2n)=x^2+y^2+z^4
$$
has a solution in non-negative integers. More precisely, for $n\leq 10^3$ we know that there are exactly 916 values of $n$ such that $b(2n)$ is a sum of three squares. Among them, there are exactly 831 values of $n$ such that $b(2n)$ is a sum of two squares and a fourth power. This large number of solutions suggest the following conjecture. 

\begin{conj}
Let $Q_{1}:=\{n\in\N:\;b(2n)=x^2+y^2+z^4\;\mbox{for some}\;x, y, z\in\N\}$. The set $Q_{1}$ is infinite. Moreover, the set $Q_{1}$ has positive natural density in $\N$.
\end{conj}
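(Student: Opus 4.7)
The plan is to split the conjecture into an infinitude step and a positive density step, both attacked via $2$-adic congruences for $b(2n)$ combined with results on sums of two squares.

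For infinitude, I would exploit the large supply of candidate fourth powers $z^4 \leq b(2n)$, of which there are $\asymp b(2n)^{1/4}$, an enormous number in view of Mahler's estimate $\log_2 b(n) \sim \tfrac{1}{2}(\log_2 n)^2$. Using Proposition \ref{prop:u_valuation} and Proposition \ref{prop:b_mod32}, I would first select an arithmetic progression $n \equiv a \pmod{2^k}$ on which $b(2n) \bmod 2^k$ is fully predictable and such that, for at least one fixed $z_0$, the odd part of $b(2n) - z_0^4$ lies in a residue class $\equiv 1 \pmod 4$; this removes the sole mod-$4$ obstruction to being a sum of two squares. Then I would aim to show that infinitely many $n$ in this progression satisfy $b(2n) - z_0^4 = x^2 + y^2$ for some $x,y$. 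One strategy is to apply an Iwaniec-type half-dimensional sieve to the shift $N \mapsto N - z_0^4$ restricted to the sparse sequence $N = b(2n)$; a more elementary fallback would use all of the $\asymp b(2n)^{1/4}$ available $z$'s jointly via an averaging argument over $z$ within the progression.

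For the density statement, the empirical ratio $831/916 \approx 0.907$ indicates that $Q_1$ likely has density near $11/12$, i.e.\ close to the density of $n$ for which $b(2n)$ is a sum of three squares at all. I would fix a large modulus $2^k$ and, for each residue class $a \pmod{2^k}$ not contained in $S_1'$ (as characterized in Corollary \ref{cor:bform}), try to establish uniformly that a positive proportion of $n \equiv a \pmod{2^k}$ lie in $Q_1$. Via the Gauss-Siegel formula, the total three-square representation count $r_3(b(2n))$ is on average very large compared with $b(2n)^{1/4}$, so the task reduces to detecting among these representations one with $z$ a perfect square, which amounts to understanding the distribution of the $z$-coordinate over three-square representations of $b(2n)$.

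The main obstacle is precisely this last detection step. Controlling the distribution of the $z$-coordinate of three-square representations of $N$ as $N$ ranges over the sparse, rapidly growing sequence $(b(2n))_{n \in \N}$ is delicate: Duke's equidistribution theorem for integer points on spheres handles generic $N$, but any form sufficiently uniform to apply to the specific sequence $b(2n)$, whose arithmetic structure beyond the $2$-adic is essentially unknown, appears to lie outside present techniques. My expectation is that infinitude is accessible by the flexibility-plus-sieve approach outlined above, while positive density will likely require either new analytic input (of Iwaniec-Fouvry type on sums of two squares in sparse sequences) or a clever algebraic identity producing representations $b(2n) = x^2 + y^2 + z^4$ directly for a positive proportion of $n$.
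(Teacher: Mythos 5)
The statement you are trying to prove is stated in the paper as a \emph{conjecture}: the authors offer no proof, only the numerical evidence that among the $916$ values of $n\le 10^{3}$ for which $b(2n)$ is a sum of three squares, exactly $831$ admit a representation $x^2+y^2+z^4$. So there is no ``paper proof'' to compare against, and your text does not supply one either; it is a research programme, and by your own admission its decisive steps are open. Concretely, the gaps are these. (i) The congruence part of your plan (choosing a progression via Proposition 3.1 and Proposition 3.2 so that $b(2n)-z_0^4$ has odd part $\equiv 1 \pmod 4$) only removes the local obstruction at $2$; it does not produce a single representation as a sum of two squares, since being $\equiv 1 \pmod 4$ is far from sufficient. (ii) The half-dimensional sieve of Iwaniec type requires a level-of-distribution input: one must control the number of $n\le x$ with $d \mid b(2n)-z_0^4$ for $d$ up to some power of the length of the sequence, with a usable error term on average over $d$. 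Nothing of the sort is known for $(b(n))_{n\in\N}$ modulo odd moduli --- the paper itself points out that it is not even known whether infinitely many primes divide some $b(n)$ --- so the sieve cannot be set in motion. (iii) Averaging over the $\asymp b(2n)^{1/4}$ choices of $z$ does not help without equidistribution of the $z$-coordinate of lattice points on the spheres $x^2+y^2+z^2=b(2n)$; Duke-type equidistribution is available for a density-one set of admissible $N$, but transferring it to the extremely sparse and arithmetically opaque sequence $N=b(2n)$ is exactly the unproved step. In short, your proposal correctly diagnoses why the conjecture is hard, but neither the infinitude claim nor the positive-density claim is established, and the statement remains a conjecture both in the paper and after your attempt.
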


On the other hand, there are exactly seven values of $n\leq 1000$ such that $b(2n)$ is a sum of a square and two fourth powers. This may suggest that the number of solutions of the equation $b(2n)=x^2+y^4+z^4$ is finite. However, due to limited range of our computations we instead formulate the following:

\begin{ques}
Is the set of $n$ such that $b(2n)=x^2+y^4+z^4$ has a solution in integers $x,y,z$ infinite? 
\end{ques}

\bigskip

An even more interesting and difficult question is whether
the set
$$
\cal{T}=\{n\in\N:\;b(2n)=x^2+y^2\}
$$
is infinite or not. Because we know the behaviour of $b(n)\mod{16}$ we can easily prove that the complement of $\cal{T}$, i.e., $\N\setminus\cal{T}$ is infinite. Indeed, from Proposition \ref{prop:b_mod32} we have $b(16n+4)\equiv 4t_{n}\pmod*{16}$. If $t_{n}=-1$, then $b(16n+4)\equiv 12\pmod*{16}$ and thus $b(16n+4)$ is not a sum of two squares.

To get a clue what can be expected in the case of the set $\cal{T}$, we computed the values of $b(2n)$ for $n\leq 2^{20}$ and check whether $b(2n)$ is a sum of two squares. We put
$$
\cal{T}(x)=\#\{n\leq x:\;n\in \cal{T}\}
$$
and in Table \ref{tab:Tvalues} we present the values of $\cal{T}(2^{n})$ for $n\leq 20$.

\begin{table}[h!]
\begin{tabular}{|c|cccccccccc|}
\hline
$n$ & 1 & 2 & 3 & 4 & 5 & 6 & 7 & 8 & 9 & 10 \\
\hline
$\cal{T}(2^{n})$ & 2 & 3 & 6 & 8 & 14 & 21 & 37 & 64 & 106 & 174 \\
\hline
\hline
$n$ & 11 & 12 & 13 & 14 & 15 & 16 & 17 & 18 & 19 & 20 \\
\hline
$\cal{T}(2^n)$ & 325 & 617 & 1089 & 2018 & 3699 & 6804 & 12551 & 23624 & 44606 & 84176\\
\hline
\end{tabular}
\caption{The number $\cal{T}(2^{n})$ for $n\leq 20$.}
\label{tab:Tvalues}
\end{table}

We also define
$$
\cal{S}=\{r_{2}(b(2n)):\;n\in\N\},
$$
where $r_{2}(m)$ is the number of representations of $m$ as a sum of two squares. Let us recall that
$$
r_{2}(m)=\sum_{d|m,d\equiv 1\pmod*{2}}(-1)^{\frac{d-1}{2}}.
$$
In the considered range; i.e., $n\leq 2^{20}$ the set $\cal{S}$ contains the number 0 and the 35 values $s_{1}\leq \ldots\leq s_{35}$. In Table \ref{tab:slnvalues} below, we present the following values: $s_{i}$, $l_{i}$ -- the number of times $s_{i}$ is attained, and $n_{i}$ -- the smallest value of $n$ such that $r_{2}(b(2n))=s_{i}$.

\begin{table}[h!]
\begin{tabular}{|c|l|l|l||c|l|l|l|}
\hline
 $i$ & $s_{i}$ & $l_{i}$ & $n_{i}$ & $i$ & $s_{i}$ & $l_{i}$ & $n_{i}$\\
 \hline
 1 & 4 & 4 & 0 & 19 & 224 & 1 & 793875 \\
 2 & 8 & 13768 & 4 & 20 & 240 & 1 & 647317 \\
 3 & 12 & 2 & 21 & 21 & 256 & 1005 & 15113 \\
 4 & 16 & 26411 & 30 & 22 & 288 & 13 & 28561 \\
 5 & 24 & 760 & 431 & 23 & 320 & 19 & 113399 \\
 6 & 32 & 22889 & 115 & 24 & 384 & 149 & 24877 \\
 7 & 40 & 36 & 2522 & 25 & 512 & 202 & 11231 \\
 8 & 48 & 1400 & 117 & 26 & 576 & 5 & 420383 \\
 9 & 56 & 1 & 27502 & 27 & 640 & 2 & 210415 \\
 10 & 64 & 11710 & 482 & 28 & 768 & 23 & 88529 \\
 11 & 72 & 9 & 21880 & 29 & 1024 & 27 & 202049 \\
 12 & 80 & 46 & 36642 & 30 & 1152 & 1 & 938983 \\
 13 & 96 & 1094 & 309 & 31 & 1280 & 1 & 162157 \\
 14 & 112 & 2 & 84169 & 32 & 1536 & 5 & 379324 \\
 15 & 128 & 4130 & 1036 & 33 & 2048 & 2 & 324442 \\
 16 & 144 & 9 & 91925 & 34 & 2560 & 1 & 295411 \\
 17 & 160 & 24 & 10785 & 35 & 4096 & 1 & 105400 \\
 18 & 192 & 451 & 3085 &    &      &   &\\
\hline
\end{tabular}
\caption{Values of $s_{i}, l_{i}$ and $n_{i}$ for $i\leq 35$.}
\label{tab:slnvalues}
\end{table}

Our numerical computations suggest the following. 

\begin{conj}
The set $\cal{T}$ is infinite.
\end{conj}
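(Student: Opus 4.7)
The plan is to exhibit an infinite family $(n_k)_{k \in \N}$ for which $b(2n_k)$ is provably a sum of two squares. Recall that a positive integer $m$ is a sum of two squares if and only if $\nu_p(m)$ is even for every prime $p \equiv 3 \pmod{4}$. Since the set of sums of two squares is closed under multiplication by $2$, and since Proposition~\ref{prop:u_valuation} yields $b(2n) = 2u(n)$ with $u(n)$ odd whenever $\nu_2(n)$ is even, it suffices to handle the parity of $\nu_p(u(n))$ for $n$ in a fixed residue class, say $n \equiv 1 \pmod{4}$.

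For each odd prime $p$, I would first use the Mahler-type functional equation $(1-x)B(x) = B(x^2)$, together with a straightforward adaptation of Proposition~\ref{prop:reduction_2p}, to argue that $(b(n) \bmod p^r)_{n \in \N}$ is $2$-automatic as a sequence over $\Z/p^r\Z$. Consequently, each set $B_{p,r} = \{n \in \N : \nu_p(b(2n)) = r\}$ is a $2$-automatic subset of $\N$, and so is the set $E_p = \bigcup_{r \geq 0} B_{p,2r}$ of $n$ with even $p$-adic valuation at $p$. The goal is then to prove that $\bigcap_{p \equiv 3 \bmod 4} E_p$ is infinite. A natural strategy is a diagonal construction: enumerate the primes $p_1 < p_2 < \dots$ with $p_i \equiv 3 \pmod{4}$, iteratively intersect $E_{p_j}$ with a growing family of automatic restrictions, and at each step apply a density lower bound (from the automaton structure) to guarantee that the surviving set remains infinite.

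The main obstacle, and the reason the statement remains conjectural, is controlling $\nu_p(b(2n))$ for \emph{all} primes $p \equiv 3 \pmod{4}$ simultaneously, including those comparable in size to $b(2n)$ itself. The $2$-automatic framework handles only finitely many small primes at a time, and provides no quantitative control over the distribution of large prime divisors of $b(n)$. A complete proof would plausibly require a Landau--Ramanujan type distribution theorem for the values of the Mahler function $B(x)$, a result far beyond the congruence techniques used in this paper. Heuristically, combining the Landau--Ramanujan density of sums of two squares with Mahler's growth estimate $\log_2 b(n) \sim \tfrac{1}{2}(\log_2 n)^2$ predicts $|\mathcal{T}(x)| \gg x/\log x$, in agreement with the numerical data of Table~\ref{tab:Tvalues}.
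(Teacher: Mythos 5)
The statement you were asked to prove is not proved in the paper at all: it appears there as a conjecture, supported only by a probabilistic heuristic (essentially the one you reproduce in your final paragraph, combining the Landau--Ramanujan density of sums of two squares with Mahler's estimate $\log_2 b(n)\sim\frac{1}{2}(\log_2 n)^2$) and by the numerical data in Tables \ref{tab:Tvalues} and \ref{tab:TTvalues}. So there is no proof in the paper to compare against, and your own text concedes that your strategy does not close.

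Still, the gaps in your first two paragraphs are worth naming precisely, because they are structural rather than technical. First, while the functional equation does give $2$-regularity of $(b(n))_{n\in\N}$ over $\Z/p^r\Z$ for each \emph{fixed} $r$ (so each $B_{p,r}$ is indeed $2$-automatic), the set $E_p=\bigcup_{r\geq 0}B_{p,2r}$ is an infinite union of sets recognized by automata of unbounded size, and there is no reason for it to be $2$-automatic; the paper even remarks that it is not known whether infinitely many primes divide some value $b(n)$, so the basic structure of the sets $B_{p,r}$ is itself opaque. Second, and more fatally, deciding whether $b(2n)$ is a sum of two squares requires controlling $\nu_p(b(2n))$ for \emph{every} prime $p\equiv 3\pmod{4}$ up to roughly $\sqrt{b(2n)}$, a collection of primes that grows with $n$; any finite intersection $\bigcap_{j\leq J}E_{p_j}$ only certifies that the small-prime part is harmless, while a single large prime $p\equiv 3\pmod{4}$ dividing $b(2n)$ to an odd power destroys the representation. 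A diagonal construction over infinitely many $E_{p_j}$ would need uniform positive-density lower bounds together with an independence or equidistribution statement for the residues $b(2n)\bmod p_j$ --- precisely the Landau--Ramanujan-type input you correctly identify as unavailable. In short, your proposal is an honest reduction of the conjecture to a harder open problem, not a proof; the paper's own evidence for the statement is the same heuristic plus computation, and the conjecture remains open.
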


The following heuristic reasoning provides further evidence towards our conjecture. More precisely, recall that the counting function of the sums of two squares up to $x$ is $O(x/\sqrt{\log x})$. Thus, one can say that the probability that a random positive integer $n$ can be written as a sum of two squares of integers is $c/\sqrt{\log n}$. Since, $\log_{2} b(n)\approx \frac{1}{2}(\log_{2}n)^{2}$ one could conjecture that the expectation that $b(n)$ is a sum of two squares is $c'/\log n$ for some positive constant $c'$, provided that $b(n)$ behaves like a random integer of its size. As a consequence, up to $x$, we would have at least 
$$
\sum_{n\leq x} \frac{1}{\log n}=\frac{x}{\log x} +O(x/\log^{2} x)
$$
values of $n$ such that $b(n)$ is a sum of two squares. We dare to formulate the following statement. 

\begin{conj}
There exists a positive real number $c$ such that
$$
\cal{T}(x)=c\frac{x}{\log x}+O(x/\log^{2} x)
$$
as $x\rightarrow +\infty$.
\end{conj}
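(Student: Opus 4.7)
The plan is to try to make rigorous the heuristic already sketched just before the conjecture, by combining Fermat's two-square theorem with distributional statements for $b(2n)$ modulo small primes $p\equiv 3\pmod*{4}$ and a Landau-type count. First, recall that a positive integer $N$ is a sum of two squares if and only if $\nu_p(N)$ is even for every prime $p\equiv 3\pmod*{4}$. By Theorem \ref{thm:chu} the $2$-adic valuation of $b(2n)$ is always $1$ or $2$, so the obstruction lies entirely in the odd part, and we may focus on the primes $p\equiv 3\pmod*{4}$.

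Second, I would aim to prove a quantitative equidistribution statement for $b(2n)\pmod*{p^j}$ for each such prime $p$ and each $j\geq 1$. The hope would be to establish, for every fixed odd prime $p\equiv 3\pmod*{4}$, an asymptotic of the shape
\begin{equation*}
\#\{n\leq x:\nu_p(b(2n))=j\}=\rho_{p,j}\,x+E_p(x),
\end{equation*}
where $\rho_{p,j}=(1-1/p)p^{-j}$ is the ``Haar density'' and $E_p(x)$ is a power-saving error term uniform in $p\leq(\log x)^{A}$. Granted such input, one would combine these densities by a sieve, exploiting pseudo-independence across different primes $p\equiv 3\pmod*{4}$ to pass to the condition that all $\nu_p(b(2n))$ are simultaneously even.

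Third, I would compare the resulting density to Landau's theorem. Since $\log b(2n)\sim \frac{1}{2\log 2}(\log n)^{2}$ by Mahler's estimate, the Landau constant $K/\sqrt{\log b(2n)}$ transforms into $c/\log n$ for the explicit constant $c=K\sqrt{2\log 2}$. Summing, a partial summation against $1/\log n$ yields
\begin{equation*}
\cal T(x)=c\sum_{n\leq x}\frac{1}{\log n}+\text{error}=c\,\frac{x}{\log x}+O\!\left(\frac{x}{\log^{2}x}\right),
\end{equation*}
provided one can save at least two powers of $\log x$ in the sieve error.

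The main obstacle, by a wide margin, is the second step: no tool in the present paper controls $\nu_p(b(2n))$ for primes $p\geq 3$, and even showing $\cal T$ is infinite appears out of reach, for the same reason that analogous conjectures for $n!$, $\binom{2n}{n}$, or similar rapidly growing sequences remain open. One possible point of entry is R\o dseth--Gupta type congruences (Theorem \ref{thm:Gupta_Rodseth}), which give recursive control of $b(2^sn)$ modulo powers of $2$ and might be extended, via the Mahler functional equation $(1-x)B(x)=B(x^2)$ reduced modulo $p$, to congruences modulo small odd primes, yielding at least local equidistribution statements. Turning such local data into a global independence/sieve argument strong enough to match Landau's $1/\sqrt{\log\cdot}$ factor is the step where genuinely new ideas seem to be required; without this, only a lower bound of shape $\cal T(x)\gg x/(\log x)^{A}$ for some $A>1$ looks conceivably within reach.
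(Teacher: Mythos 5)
The statement you are addressing is a \emph{conjecture} in the paper: the authors offer no proof, only the very heuristic you are elaborating (Landau's $K/\sqrt{\log N}$ density for sums of two squares combined with Mahler's estimate $\log_{2}b(n)\sim\frac{1}{2}(\log_{2}n)^{2}$) together with numerical data for $n\leq 2^{20}$. Your proposal is an honest expansion of that heuristic, and you correctly locate the fatal gap: nothing in the paper, or in the literature, controls $\nu_{p}(b(2n))$ for a single odd prime $p$. The authors themselves remark that it is not even known whether infinitely many primes divide some $b(n)$, and the only unconditional statement they extract about $\cal{T}$ is that its complement is infinite, via the mod $16$ congruence $b(16n+4)\equiv 4t_{n}$. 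The Mahler equation $(1-x)B(x)=B(x^{2})$ reduced modulo $p^{j}$ does give $2$-automaticity of $(b(n)\bmod{p^{j}})_{n\in\N}$ (cf.\ the argument of Proposition \ref{prop:reduction_2p}), hence existence of residue frequencies in favorable cases, but there is no mechanism forcing those frequencies to equal the Haar densities $\rho_{p,j}$, let alone with a power-saving error term uniform in $p$.

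There is also a structural obstacle beyond the one you name. Being a sum of two squares is not a congruence condition modulo any fixed integer: deciding it for $b(2n)$ requires the parity of $\nu_{p}(b(2n))$ for all primes $p\equiv 3\pmod*{4}$ up to $\sqrt{b(2n)}\approx\exp\bigl((\log n)^{2}/(4\log 2)\bigr)$, which is super-polynomially larger than any range $p\leq(\log x)^{A}$ in which equidistribution could plausibly be established. Landau's constant arises from the full Euler product over such primes, so a sieve truncated at small primes can at best yield an upper bound of the shape $\cal{T}(x)\ll x/(\log x)^{1-\eps}$ by discarding those $n$ with $\nu_{p}(b(2n))$ odd for some small $p$; it cannot produce the asymptotic with the correct constant $c$, nor the claimed error term $O(x/\log^{2}x)$. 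So even granting your second step in full, the third step does not follow. Your write-up is a reasonable research program consistent with the authors' own reasoning, but it is not a proof, and the statement remains open.
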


 Although limited, our computations confirm such an expectation. In Table \ref{tab:TTvalues} we give the values $\cal{T}(2^{m})\frac{m}{2^{m}}$ for $m=10, \ldots, 20$.
\begin{table}[h!]
\begin{tabular}{|c|lllllllllll|}
\hline
 $m$ & 10 & 11 & 12 & 13 & 14 & 15 & 16 & 17 & 18 & 19 & 20 \\
 \hline
 $\cal{T}(2^{m})\frac{m}{2^{m}}$ & 1.67 & 1.74 & 1.80 & 1.73 & 1.72 & 1.7 & 1.66 & 1.63 & 1.62 & 1.62& 1.61 \\
 \hline
\end{tabular}
\caption{Values of $\cal{T}(2^{m})\frac{m}{2^{m}}$ for $m=10, \ldots, 20$.}
\label{tab:TTvalues}
\end{table}

\begin{rem}{\rm The expectation that $b(n)$ behaves like a random integer of its size is very likely. Indeed, numerical computations suggest that for any odd integer $m$ the sequence $(b(n)\pmod{m})_{n\in\N}$ is uniformly distributed; i.e., for any $r\in\{0, 1, \ldots, m-1\}$ we have
$$
\lim_{N\rightarrow +\infty}\frac{\#\{n\leq N:\;b(n)\equiv r\pmod*{m}\}}{N}=\frac{1}{m}.
$$
However, according to the best knowledge of the authors, it is not even known whether the set of prime numbers $p$ such that $p|b(n)$ for some $n$, is infinite.   

}\end{rem}

\bigskip

\bigskip

\noindent Bartosz Sobolewski, Jagiellonian University, Faculty of Mathematics and Computer Science, Institute of Mathematics, {\L}ojasiewicza 6, 30 - 348 Krak\'{o}w, Poland\\
e-mail:\;{\tt  bartosz.sobolewski@uj.edu.pl}
\bigskip

\noindent  Maciej Ulas, Jagiellonian University, Faculty of Mathematics and Computer Science, Institute of Mathematics, {\L}ojasiewicza 6, 30 - 348 Krak\'{o}w, Poland\\
e-mail:\;{\tt  maciej.ulas@uj.edu.pl}
\bigskip

 \end{document}